\newtheorem{thm}{Theorem}[section]
\newtheorem{lem}[thm]{Lemma}
\newtheorem{cor}[thm]{Corollary}
\newtheorem{prop}[thm]{Proposition}
\newtheorem{conj}[thm]{Conjecture}
\theoremstyle{definition}
\newtheorem{Def}[thm]{Definition}
\newtheorem*{open}{Open problem}
\newtheorem*{rem}{Remark}
\newcommand\C{\mathbb{C}}
\newcommand\R{\mathbb{R}}
\newcommand\T{\mathbb{T}}
\newcommand\F{\mathbb{F}}
\newcommand\sch{\mathcal{S}}
\newcommand{\slr}{\mathrm{SL}_2(\R)}
\newcommand\norm[1]{\|#1\|}
\newcommand\bab[1]{\left\lvert #1\right\rvert}
\newcommand\ab[1]{\lvert #1\rvert}
\newcommand\inn[1]{\left\langle #1\right\rangle}
\newcommand\inv{^{-1}}
\newcommand\up[1]{^{(#1)}}
\newcommand\wh[1]{\widehat{#1}}
\newcommand\ol[1]{\overline{#1}}
\newcommand\dd{\,\mathrm{d}}
\DeclareMathOperator\supp{supp}
\DeclareMathOperator\sgn{sgn}
\DeclareMathOperator\rank{rank}
\DeclareMathOperator\rksupp{rk-supp}
\DeclareMathOperator\minsupp{min-supp}
\DeclareMathOperator\tr{Tr}
\newcommand{\smat}[1]{\left(\begin{smallmatrix}#1\end{smallmatrix}\right)}
\title{The uncertainty principle: variations on a theme}
\author{Avi Wigderson\thanks{School of Mathematics, Institute for Advanced Study, Princeton, NJ 08540, USA. Email: \url{avi@ias.edu}. Research supported by NSF grant CCF-1900460} \and Yuval Wigderson\thanks{Department of Mathematics, Stanford University, Stanford, CA 94305, USA. Email: \url{yuvalwig@stanford.edu}. Research supported by NSF GRFP Grant DGE-1656518.}}
\begin{document}
\maketitle
\begin{abstract}
	We show how a number of well-known uncertainty principles for the Fourier transform, such as the Heisenberg uncertainty principle, the Donoho--Stark uncertainty principle, and Meshulam's non-abelian uncertainty principle, have little to do with the structure of the Fourier transform itself. Rather, all of these results follow from very weak properties of the Fourier transform (shared by numerous linear operators), namely that it is bounded as an operator $L^1 \to L^\infty$, and that it is unitary. Using a single, simple proof template, and only these (or weaker) properties, we obtain some new proofs and many generalizations of these basic uncertainty principles, to new operators and to new settings, in a completely unified way. Together with our general overview, this paper can also serve as a survey of the many facets of the phenomena known as uncertainty principles.
\end{abstract}

\tableofcontents
\section{Introduction}

\subsection{Background}

The phrase ``uncertainty principle'' refers to any of a wide class of theorems, all of which capture the idea that a non-zero function and its Fourier transform cannot both be 
``very localized''.  This phenomenon has been under intensive study for almost a century now, with  new results published continuously to this day. So while this introduction (and the paper itself) discusses some broad aspects of it, this is not a comprehensive survey. For more information on the history of the uncertainty principle, and for many other generalizations and variations, we refer the reader to the excellent survey of Folland and Sitaram \cite{FoSi}.

The study of uncertainty principles began with Heisenberg's seminal 1927 paper~\cite{Heisenberg}, with the corresponding mathematical formalism independently due to Kennard \cite{Kennard} and Weyl \cite{Weyl}. 
The original motivation for studying the uncertainty principle came from quantum mechanics\footnote{As some pairs of natural physical parameters, such as the position and momentum of a particle, can be viewed as such dual functions, the phrase ``uncertainty principle'' is meant to indicate that it is impossible to measure both to arbitrary precision.}, and thus most classical uncertainty principles deal with functions on $\R$ or $\R^n$. The first, so-called Heisenberg uncertainty principle, says that the \emph{variance} (appropriately defined, see Section \ref{sec:infinite-dim}) of a function and of its Fourier transform cannot both be small. Following Heisenberg's paper, many different notions of locality were studied. For example, it is a simple and well-known fact that if $f:\R \to \C$ has compact support, then $\hat f$ can be extended to a holomorphic function on $\C$, which in particular implies that $\hat f$ only vanishes on a discrete countable set, and so it is not possible for both $f$ and $\hat f$ to be compactly supported. This fact was generalized by Benedicks \cite{Benedicks} (and further extended by Amrein and Berthier \cite{AmBe}), who showed that it is not possible for both $f$ and $\hat f$ to have supports of finite measure. Another sort of uncertainty principle, dealing not with the sharp localization of a function, but rather with its decay at infinity, has also been widely studied. The first such result is due to Hardy \cite{Hardy}, who proved (roughly) that it is not possible for both $f$ and $\hat f$ to decay faster than $e^{-x^2}$. Yet another type of uncertainty principle is the logarithmic version conjectured by Hirschman \cite{Hirschman} and proven by Beckner~\cite{Beckner75} and independently Bia\l ynicki-Birula and Mycielski \cite{BiMy}, which deals with the Shannon entropies of a function and its Fourier transform, and which has connections to log-Sobolev and hypercontractive inequalities \cite{Beckner95}. 

In 1989, motivated by applications to signal 
processing\footnote{In this context, some similar theorems can be called ``certainty principles''. Here, this indicates that one can use the fact that one parameter is {\em not} localized to measure it well by random sampling.}, Donoho and Stark \cite{DoSt} initiated the study of a new type of uncertainty principle, which deals not with functions defined on $\R$, but rather with functions defined on finite groups. Many of the concepts discussed above, such as variance and decay at infinity, do not make sense when dealing with functions on a finite group. However, other measures of ``non-localization'', such as the size of the support of a function, are well-defined in this context, and the Donoho--Stark uncertainty principle deals with this measure. Specifically, they proved that if $G$ is a finite abelian group\footnote{Strictly speaking, Donoho and Stark only proved this theorem for cyclic groups, but it was quickly observed that the same result holds for all finite abelian groups.}, $\wh G$ is its dual group, $f:G \to \C$ is a non-zero function, and $\hat f:\wh G \to \C$ is its Fourier transform, then $\ab{\supp(f)}\ab{\supp(\hat f)} \geq \ab G$. They also proved a corresponding theorem for an appropriate notion of ``approximate support'' (see Section \ref{sec:approx-support} for more details). The work of Donoho and Stark led to a number of other uncertainty principles for finite groups. Three notable examples are Meshulam's extension \cite{Meshulam} of the Donoho--Stark theorem to arbitrary finite groups, Tao's strengthening \cite{Tao} of the Donoho--Stark theorem in case $G$ is a cyclic group of prime order, and the discrete entropic uncertainty principles of Dembo, Cover, and Thomas \cite{DeCoTh}, which generalize the aforementioned theorems of Hirschman \cite{Hirschman}, Beckner \cite{Beckner75} and Bia\l ynicki-Birula and Mycielski \cite{BiMy}.

Despite the fact that all uncertainty principles are intuitively similar, their proofs use a wide variety of techniques and a large number of special properties of the Fourier transform. Here is a sample of this variety. The standard proof of Heisenberg's uncertainty principle uses integration by parts, and the fact that the Fourier transform on $\R$ turns differentiation into multiplication by $x$. Benedicks's proof that a function and its Fourier transform cannot both have finite-measure supports uses the Poisson summation formula. The logarithmic uncertainty principle follows from differentiating a deep fact of real analysis, namely the sharp Hausdorff--Young inequality of Beckner \cite{Beckner75}. The original proof of the Donoho--Stark uncertainty principle uses the fact that the Fourier transform on a cyclic group $G$, viewed as a $\ab G \times \ab G$ matrix, is a Vandermonde matrix, and correspondingly certain submatrices of it can be shown to be non-singular. Tao's strengthening of this theorem also uses this Vandermonde structure, together with a result of Chebotar\"ev which says that in case $\ab G$ is prime, \emph{all} square submatrices of this Vandermonde matrix are non-singular. 
The proof of Donoho and Stark's approximate support inequality relates it to different norms of submatrices of the Fourier transform matrix. Finally, Meshulam's proof of the non-abelian
uncertainty principle uses linear-algebraic considerations in the group algebra $\C[G]$.

\subsection{The simple theme and its variations}
In this paper, we present a unified framework for proving many (but not all) of these uncertainty principles, together with various generalizations of them. The key observation throughout is that although the proofs mentioned above use a wide variety of analytic and algebraic properties that are particular to the Fourier transform, these results can also be proved using almost none of these properties. Instead, all of our results will follow from two very basic facts about the Fourier transform, namely that it is bounded as an operator $L^1 \to L^\infty$, and that it is unitary\footnote{In fact, a far weaker condition than unitarity is needed for our results, as will become clearer in the technical sections.}. Because the Fourier transform is by no means the only operator with these properties, we are able to extend many of these well-known uncertainty principles to many other operators.

This unified framework is, at its core, very simple. The $L^1 \to L^\infty$ boundedness of the Fourier transform gives an inequality relating $\norm{\hat f}_\infty$ to $\norm f_1$. Similarly, the $L^1 \to L^\infty$ boundedness of the inverse Fourier transform gives an analogous inequality relating $\norm f_\infty$ to $\norm{\hat f}_1$. Multiplying these two inequalities together yields our basic uncertainty principle, which has the form 
\[
	\frac{\norm f_1}{\norm f_\infty} \cdot \frac{\norm {\hat f}_1}{\norm {\hat f}_\infty} \geq C_0,
\]
for an appropriate constant $C_0$. Thus, in a sense, the ``measure of localization'' $H_0(g) = \frac{\norm g_1}{\norm g_\infty}$ is a primary one for us, and the uncertainty principle above, 
\begin{equation}\label{eq:L0}
    H_0(f) \cdot H_0({\hat f}) \geq C_0
\end{equation}
is the source of essentially all our uncertainty principles. Note that $H_0$ really is a yet another ``measure of localization'' of a function $g$, in that a function that is more ``spread out'' will have a larger $L^1$ norm than a more localized function, if both have the same $L^\infty$ norm. Here is how we will use this primary uncertainty principle.

Suppose we want to prove any uncertainty principle, for any potential ``measure of localization'' $H$ on functions, e.g.\ one of the form 
\begin{equation}\label{eq:L-up}
	H(f) \cdot H({\hat f}) \geq C.
\end{equation}
For example, our ``measure of localization'' $H$ might be the {\em variance} of (the square of) a function on the reals if we want to prove the Heisenberg uncertainty principle, or $H$ might be the {\em support size} of a finite-dimensional vector if we want to prove the Donoho--Stark uncertainty principle. 

We will derive (\ref{eq:L-up}) by first proving a universal\footnote{This bound is universal in the sense that no operator like the Fourier transform is involved in this inequality; it deals only with a single function $g$.} bound, relating the measure of localization $H$ to our primary one $H_0$, that holds for {\em every} function $g$. This reduction will typically take the form
\[ 
	H(g) \geq C' \cdot H_0(g) = C' \cdot \frac{\norm g_1}{\norm g_\infty} .
\]
Now this bound can be applied to both $f$ and $\hat f$ separately, which combined with our primary principle (\ref{eq:L0}) yields  (with $(C')^2 = C\cdot C_0$) the desired uncertainty principle (\ref{eq:L-up})\footnote{Variants of this idea will come in handy as well, e.g.\ proving that $H(g) \geq  H_0(g)^{C'}$, yielding $C=C_0^{C'}$.}.

Such an approach to proving simple uncertainty principles is by no means new; it goes back at least to work of Williams \cite{Williams} from 1979, who used it to prove a weak version of an approximate-support inequality for the Fourier transform on $\R$ (see Section \ref{sec:approx-support} for more details). Moreover, this approach to proving the Donoho--Stark uncertainty principle has apparently been independently rediscovered several times, e.g.\ \cite{Conrad,Ram}. However, we are not aware of any previous work on the wide applicability of this simple approach; indeed, we found no other applications besides the two mentioned above. Importantly, the separation of the two parts of the proof above is only implicit in these papers (after all, the whole proof in these cases is a few lines), and we believe that making this partition explicit and general, as presented above, is the source of its power. We note that though the second part of this two-part approach is often straightforward to prove, it occasionally becomes interesting and non-trivial; see for instance, Section \ref{sec:open-problems}.

In this paper, will see how this approach leads to a very different proof of the original Heisenberg uncertainty principle, in which the measure $H$ is the variance. We will then prove other versions of that classical principle, which yield uncertainty when $H$ captures higher moments than the variance. We will also see how it extends to uncertainty principles where $H$ captures several notions of approximate support and ``non-abelian'' support, as well as new uncertainty principles where $H$ captures the ratio between other pairs of norms (which in turn will be useful for other applications). Throughout the paper, we attempt to establish tightness of the bounds, at least up to constant factors. 

As mentioned, the {\em primary}  uncertainty principle uses a very basic property that far from special to the Fourier transform, but is shared by (and so applies to) many other operators in different discrete and continuous settings, which we call \emph{$k$-Hadamard operators}. 

Although the study of uncertainty principles is nearly a century old, it continues to be an active and vibrant field of study, with new results coming out regularly (e.g.\ \cite{CaMaOr,Erb,GoOlRa,Northington,Poria,QuRuSu}---all from the past 12 months!).
While many uncertainty principles are unlikely to fit into the simple framework above, we nonetheless hope that our technique will help develop this theory and find further applications.

\subsection{Outline of the paper}
The paper has two main parts, which have somewhat different natures. The first is on finite-dimensional uncertainty principles, and the second is on infinite-dimensional ones. Both parts have several sections, each with a different incarnation of the uncertainty principle. In most sections, we begin with a known result, which we then show how to reprove and generalize using our framework. We remark that most sections and subsections are independent of one another, and can be read in more or less any order. We therefore encourage the reader to focus on those sections they find most interesting.

Before delving into these, we start with the preliminary Section~\ref{sec:generalities}. In it, we first formally state and prove the (extremely simple) primary $L^1 \to L^\infty$ uncertainty principle, from which everything else will follow. This leads to a natural abstract definition of operators amenable to this proof, which we call $k$-Hadamard matrices; most theorems in the finite-dimensional section will be stated in these terms (and a similar notion will be developed for the infinite-dimensional section).

In Section \ref{sec:finite-dim}, we survey, reprove, and generalize finite-dimensional uncertainty principles, including the commutative and non-commutative support uncertainty principles of (respectively) Donoho--Stark and Meshulam, several old and new approximate support uncertainty principles, including those of Williams and Donoho--Stark, as well as some new uncertainty principles on norms. In Section \ref{sec:infinite-dim} we turn to infinite-dimensional vector spaces and the uncertainty principles one can prove there. These include support inequalities for the Fourier transform on topological groups and several variants and extensions of Heisenberg's uncertainty principle, in particular to higher moments. Moreover, these theorems apply to the general class of Linear Canonical Transforms (which vastly extend the Fourier transform). Finally, Appendix \ref{sec:appendix} collects the proofs of some technical theorems.

\section{The primary uncertainty principle and \texorpdfstring{$k$}{k}-Hadamard matrices}\label{sec:generalities}
As stated in the Introduction, the primary uncertainty principle that will yield all our other results is a theorem that lower-bounds the product of two $L^1$ norms by the product of two $L^\infty$ norms. In this section, we begin by stating this primary principle, as well as giving its (extremely simple) proof. We then define $k$-Hadamard matrices, which will be our main object of study in Section \ref{sec:finite-dim}, and whose definition is motivated from the statement of the primary uncertainty principle (roughly, the definition of $k$-Hadamard matrices is ``those matrices to which the primary uncertainty principle applies''). We end this section with several examples of $k$-Hadamard matrices, which show that such matrices arise naturally in many areas of mathematics, such as group theory, design theory, random matrix theory, coding theory, and discrete geometry.

\subsection{The primary uncertainty principle}
We begin by recalling the definition of an operator norm. 
Let $V,U$ be any two real or complex vector spaces, and let $\norm \cdot _{V}$ and $\norm \cdot _{U}$ be any norms on $V,U$, respectively. Let $A:V \to U$ a linear map.  Then the \emph{operator norm} of $A$ is 
\[
	\norm A_{V \to U} = \sup_{0 \neq v \in V} \frac{\norm{Av}_U}{\norm v_V} = \sup_{\substack{v \in V\\\norm v_V  = 1}} \norm{Av}_U.
\]
For $1 \leq p, q \leq \infty$, we will denote by $\norm A_{p \to q}$ the operator norm of $A$ when $\norm \cdot_V$ is the $L^p$ norm on $V$ and $\norm \cdot_U$ is the $L^q$ norm on $U$. 

With this notation, we can state our main theorem, which is the primary uncertainty principle that will underlie all our other results. We remark that this theorem, as stated below, is nearly tautological---our assumptions on the operators $A$ and $B$ are tailored to give the desired result by a one-line implication. Despite this simple nature, the strength of this theorem comes from the fact that many natural operators, such as the Fourier transform, satisfy these hypotheses.
\begin{thm}[Primary uncertainty principle]\label{thm:primary-up}
	Let $V,U$ be real or complex vector spaces, each equipped with two norms $\norm \cdot _1$ and $\norm \cdot_\infty$, and let $A:V \to U$ and $B:U \to V$ be linear operators. Suppose that $\norm A_{1 \to \infty} \leq 1$ and $\norm{B}_{1 \to \infty} \leq 1$. Suppose too that $\norm {BAv}_\infty \geq k \norm v_\infty$ for all $v \in V$, for some parameter $k>0$. Then for any $v \in V$,
	\[
		\norm v_1 \norm{Av}_1 \geq k \norm v_\infty \norm{Av}_\infty.
	\]
\end{thm}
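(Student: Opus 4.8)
The plan is to chain together the three hypotheses in the most direct way possible, exactly as the remark before the statement promises. First I would unpack the operator-norm hypothesis $\norm{A}_{1\to\infty}\le 1$: by definition of $\norm{\cdot}_{1\to\infty}$ this says precisely that $\norm{Av}_\infty \le \norm{v}_1$ for every $v\in V$. Similarly, $\norm{B}_{1\to\infty}\le 1$ gives $\norm{Bu}_\infty \le \norm{u}_1$ for every $u\in U$; applying this with $u = Av$ yields $\norm{BAv}_\infty \le \norm{Av}_1$.

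Next I would invoke the third hypothesis, $\norm{BAv}_\infty \ge k\norm{v}_\infty$, to get the lower bound $\norm{Av}_1 \ge \norm{BAv}_\infty \ge k\norm{v}_\infty$. Now I have two inequalities in hand: $\norm{Av}_\infty \le \norm{v}_1$ and $k\norm{v}_\infty \le \norm{Av}_1$. Multiplying them together — both sides are nonnegative, so the multiplication is valid — gives $k\norm{v}_\infty \norm{Av}_\infty \le \norm{v}_1 \norm{Av}_1$, which is exactly the claimed inequality. One small point worth checking is the degenerate case $v=0$ (or $Av=0$), where both sides vanish and the inequality holds trivially; for $v\neq 0$ everything above goes through verbatim.

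Honestly, there is no real obstacle here: the theorem is, as the authors say, "nearly tautological," and the only thing to be careful about is bookkeeping — making sure each norm inequality is applied to the right vector in the right space ($v$ and $Av$, living in $V$ and $U$ respectively), and that the direction of each inequality is tracked correctly so that the final multiplication points the right way. The one conceptual ingredient, if it can be called that, is the observation that multiplying two valid inequalities between nonnegative reals preserves the inequality; no convexity, no Hölder, nothing deeper is needed. I would present the proof as essentially a three-line display: $\norm{Av}_\infty \le \norm{v}_1$, then $k\norm{v}_\infty \le \norm{BAv}_\infty \le \norm{Av}_1$, then multiply.
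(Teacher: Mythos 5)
Your proof is correct and follows essentially the same three-line argument as the paper: unpack both $1\to\infty$ norm bounds into $\norm{Av}_\infty\le\norm v_1$ and $\norm{BAv}_\infty\le\norm{Av}_1$, bring in $\norm{BAv}_\infty\ge k\norm v_\infty$, and multiply. The only cosmetic difference is the order in which you invoke the third hypothesis (before multiplying rather than after), which changes nothing.
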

\begin{proof}
	Since $\norm A_{1 \to \infty} \leq 1$, we have that 
	\[
		\norm {Av}_\infty \leq \norm v_1.
	\]
	Similarly, since $\norm B_{1 \to \infty} \leq 1$,
	\[
		\norm{BAv}_\infty \leq \norm{Av}_1.
	\]
	Multiplying these two inequalities together, we find that
	\[
		\norm v_1 \norm{Av}_1 \geq \norm{Av}_\infty \norm{BAv}_\infty \geq k \norm v_\infty \norm {Av}_\infty,
	\]
	as claimed.
\end{proof}
Note that Theorem \ref{thm:primary-up} holds regardless of the dimensions of $V$ and $U$ (so long as the $L^1$ and $L^\infty$ norms are well-defined on them), and in Section \ref{sec:infinite-dim}, we will use this primary uncertainty principle in infinite dimensions. But for the moment, let us focus on finite dimensions, in which case we take $\norm \cdot_1$ and $\norm \cdot_\infty$ to be the usual $L^1$ and $L^\infty$ norms on $\R^n$ or $\C^n$. When applying Theorem \ref{thm:primary-up}, we will usually take $B=A^*$. Note that the $1 \to \infty$ norm of a matrix is simply the maximum absolute value of the entries in the matrix, so $\norm A_{1 \to \infty} \leq 1$ if and only if all entries of $A$ are bounded by $1$ in absolute value. Moreover, if $B=A^*$, then\footnote{In fact, this holds in greater generality: as long as $\norm \cdot_1$ and $\norm \cdot_\infty$ are dual norms on any inner product spaces, we will have that $\norm A_{1 \to \infty} = \norm{A^*}_{1 \to \infty}$.} $\norm B_{1 \to \infty} = \norm A_{1 \to \infty}$. Thus, in this case, all we need to check in order to apply Theorem \ref{thm:primary-up} is that $\norm{A^*Av}_\infty \geq k \norm v_\infty$ for all $v \in V$. This motivates the following definition, of $k$-Hadamard matrices, which are defined essentially as ``those matrices that Theorem \ref{thm:primary-up} applies to''. We note that a similar definition was made by Dembo, Cover, and Thomas in \cite[Section IV.C]{DeCoTh}, and they state their discrete norm and entropy uncertainty principles in similar generality.
\begin{Def}
	Let $A \in \C^{m \times n}$ be a matrix and $k>0$. We say that $A$ is \emph{$k$-Hadamard} if every entry of $A$ has absolute value at most $1$ and $\norm{A^*Av}_{\infty}\geq k\norm v_\infty$ for all $v \in \C^n$. 
	Equivalently, $A$ is $k$-Hadamard if all its entries are bounded by $1$ in absolute value, $A^*A$ is invertible, and $\norm{(A^* A)\inv}_{\infty \to \infty} \leq 1/k$. 
\end{Def}
The next subsection consists of a large number of examples of $k$-Hadamard matrices which arise naturally in many areas of mathematics. Before proceeding, we end this subsection with three general observations. The first is the observation that the simplest way to ensure that\footnote{And indeed, to have that $\norm{A^*Av} = k \norm v$ for any norm whatsoever.} $\norm{A^* Av}_\infty \geq k \norm v_\infty$ is to assume that $A^* A = kI$. As we will see in the next section, many natural examples of $k$-Hadamard matrices have this stronger unitarity property.

Our second general observation is a rephrasing of Theorem \ref{thm:primary-up}, using the terminology of $k$-Hadamard matrices. Note that it is really identical to Theorem \ref{thm:primary-up}, but we state it separately for convenience.
\begin{thm}[Primary uncertainty principle, rephrased]\label{thm:hadamard-uncertainty}
	Let $A \in \C^{m \times n}$ be $k$-Hadamard. Then for any $v \in \C^n$, we have that
	\[
		\norm v_1 \norm{Av}_1 \geq k \norm v_\infty \norm{Av}_\infty.
	\]
\end{thm}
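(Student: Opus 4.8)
The plan is to observe that Theorem~\ref{thm:hadamard-uncertainty} is an immediate specialization of Theorem~\ref{thm:primary-up} with the choice $B = A^*$, so the entire task reduces to checking that the hypotheses of the primary uncertainty principle are met. Concretely, I would take $V = \C^n$ and $U = \C^m$, each equipped with the standard $L^1$ and $L^\infty$ norms, set $B = A^* : U \to V$, and verify the three assumptions in turn.

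First I would recall that for a matrix, the $1 \to \infty$ operator norm equals the largest absolute value of an entry: applying a matrix to a standard basis vector $e_j$ (which has $L^1$ norm $1$) returns the $j$-th column, and the $L^\infty$ norm of that column is the largest absolute entry in the column, so $\norm{A}_{1 \to \infty} = \max_{i,j} \ab{A_{ij}}$; and by convexity this is also the supremum over all unit-$L^1$-norm vectors. Since $A$ is $k$-Hadamard, every entry has absolute value at most $1$, hence $\norm{A}_{1 \to \infty} \leq 1$. Because $A^*$ has entries $\ol{A_{ji}}$, which have the same absolute values, the same argument gives $\norm{B}_{1 \to \infty} = \norm{A^*}_{1 \to \infty} \leq 1$. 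Second, the defining property of a $k$-Hadamard matrix is exactly that $\norm{A^* A v}_\infty \geq k \norm v_\infty$ for all $v \in \C^n$, i.e.\ $\norm{BAv}_\infty \geq k \norm v_\infty$, which is the remaining hypothesis of Theorem~\ref{thm:primary-up}.

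With all hypotheses verified, Theorem~\ref{thm:primary-up} applies and yields $\norm v_1 \norm{Av}_1 \geq k \norm v_\infty \norm{Av}_\infty$ for every $v \in \C^n$, which is precisely the claimed inequality. There is no real obstacle here: the only thing requiring a word of justification is the identification of $\norm{\cdot}_{1\to\infty}$ with the max-entry bound and the observation that passing to the conjugate transpose does not change it; everything else is a direct substitution into the already-proved Theorem~\ref{thm:primary-up}. (If one preferred, one could equally well unwind the one-line proof of Theorem~\ref{thm:primary-up} in place, writing $\norm{Av}_\infty \leq \norm v_1$ from the entry bound on $A$, then $\norm{A^*Av}_\infty \leq \norm{Av}_1$ from the entry bound on $A^*$, and finally chaining these with the lower bound $\norm{A^*Av}_\infty \geq k\norm v_\infty$; but invoking Theorem~\ref{thm:primary-up} directly is cleaner.)
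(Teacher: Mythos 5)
Your proposal matches the paper's approach exactly: the paper introduces Theorem~\ref{thm:hadamard-uncertainty} explicitly as a restatement of Theorem~\ref{thm:primary-up} with $B=A^*$, having already noted that $\norm{A}_{1\to\infty}$ is the maximum absolute entry and that $\norm{A^*}_{1\to\infty}=\norm{A}_{1\to\infty}$. The verification you give is precisely the intended (and essentially tautological) one.
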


Thirdly, one can observe that the proof of Theorem \ref{thm:primary-up} never actually used any properties whatsoever of the (usual) $L^1$ and $L^\infty$ norms, and the same result holds (with appropriately modified assumptions) for any choice of norms. We chose above to state the primary uncertainty principle specifically for the $L^1$ and $L^\infty$ norms simply because it is that statement that will be used to prove all subsequent theorems. However, for completeness, we now state the most general version which holds for all norms. The proof is identical to that of Theorem \ref{thm:primary-up}, so we omit it.

\begin{thm}[Primary uncertainty principle, general version]\label{thm:general-case}
	Let $V,U$ be real or complex vector spaces, and let $A:V \to U$ and $B:U \to V$ be linear operators. Let $\norm \cdot_{V \up 1},\norm \cdot_{V \up 2}$ be two norms on $V$, and $\norm \cdot_{U \up 1},\norm \cdot_{U \up 2}$ two norms on $U$. Suppose that $\norm A_{V\up 1 \to U \up 2} \leq 1$ and $\norm B_{U \up 1 \to V \up 2} \leq 1$, and suppose that $\norm{BAv}_{V \up 2} \geq k \norm v_{V \up 2}$ for all $v \in V$ and some $k > 0$. Then for any $v \in V$,
	\[
		\norm v_{V\up 1} \norm{Av}_{U \up 1} \geq k \norm v_{V \up 2} \norm{Av}_{U \up 2}.
	\]
\end{thm}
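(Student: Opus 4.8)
The plan is to mimic the proof of Theorem~\ref{thm:primary-up} verbatim, merely substituting the abstract norms for the specific $L^1$ and $L^\infty$ norms, since --- as the excerpt already observes --- that proof used no special property of those norms beyond the three stated hypotheses. Concretely, I would unwind the two operator-norm bounds into pointwise inequalities and then multiply them.

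First, from $\norm A_{V\up 1 \to U\up 2} \leq 1$ and the definition of operator norm, applied to the vector $v \in V$, I get $\norm{Av}_{U\up 2} \leq \norm{v}_{V\up 1}$. Second, from $\norm B_{U\up 1 \to V\up 2} \leq 1$, applied to the vector $Av \in U$, I get $\norm{BAv}_{V\up 2} \leq \norm{Av}_{U\up 1}$. Third, the hypothesis $\norm{BAv}_{V\up 2} \geq k\norm{v}_{V\up 2}$ gives a lower bound on the left side of the second inequality. Chaining these, I would write
\[
	\norm{v}_{V\up 1}\,\norm{Av}_{U\up 1} \;\geq\; \norm{Av}_{U\up 2}\,\norm{BAv}_{V\up 2} \;\geq\; k\,\norm{v}_{V\up 2}\,\norm{Av}_{U\up 2},
\]
where the first step multiplies the two operator-norm inequalities (valid since all quantities are nonnegative) and the second applies the lower bound on $\norm{BAv}_{V\up 2}$. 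This is exactly the claimed inequality, so the proof is complete.

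There is essentially no obstacle here: the statement is, as the authors themselves note, tailored so that the hypotheses yield the conclusion by a one-line implication, and no hidden positivity, finite-dimensionality, or duality assumptions are needed --- multiplying nonnegative reals preserves inequalities, and each of the three inequalities is an immediate instance of a hypothesis. The only point worth a moment's care is making sure the two operator-norm hypotheses are applied to the correct vectors ($v$ for $A$, and $Av$ for $B$) and that the norm superscripts are tracked correctly through the chain, since $A$ and $B$ interact with different pairs of norms on $V$ and $U$. Given that the paper explicitly says ``The proof is identical to that of Theorem~\ref{thm:primary-up}, so we omit it,'' I would in fact simply omit it as well, or include the three-line display above for completeness.
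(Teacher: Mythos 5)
Your proof is correct and is exactly the argument the paper has in mind: it is the proof of Theorem~\ref{thm:primary-up} with the abstract norms substituted, which is precisely why the authors omit it. The chain of three inequalities and the multiplication step are applied to the right vectors with the norm superscripts tracked correctly, so there is nothing to add.
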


\subsection{Examples of \texorpdfstring{$k$}{k}-Hadamard matrices}\label{subsec:k-hadamard-ex}
We end this section by collecting several classes of examples of $k$-Hadamard matrices. While this subsection may be skipped at first reading, its main point is to demonstrate the richness of operators for which uncertainty principles hold. As remarked above, many of these matrices actually satisfy the stronger property that $A^*A = kI$.

\begin{description}
	\item[Hadamard matrices] 
	Observe that if $A$ is a $k$-Hadamard $n\times n$ matrix, then $k \leq n$, and thus $n$-Hadamard matrices are best possible. One important class of $n$-Hadamard matrices are the ordinary\footnote{Whence our name for such matrices.} Hadamard matrices, which are $n\times n$ matrices $A$ with all entries in $\{-1,1\}$ and with $A^*A=nI$. There are many constructions of Hadamard matrices, notably Paley's constructions \cite{Paley} coming from quadratic residues in finite fields. Moreover, one can always take the tensor product of two Hadamard matrices and produce a new Hadamard matrix, which allows one to generate an infinite family of Hadamard matrices from a single example, such as the $2 \times 2$ matrix $\smat{1&1\\-1&1}$. Of course, these examples are nothing but the Fourier transform matrices over Hamming cubes. We remark that there are still many open questions about Hadamard matrices, most notably the so-called Hadamard conjecture, which asserts that $n \times n$ Hadamard matrices should exist for all $n$ divisible by $4$. 

	\item[Complex Hadamard matrices]
	However, $n$-Hadamard $n\times n$ matrices are more general than Hadamard matrices, because we do not insist that the entries be real. In fact, one can show that $n$-Hadamard matrices are precisely complex Hadamard matrices, namely matrices with entries on the unit circle $\{z \in \C:\ab z =1\}$ whose rows are orthogonal. There is a rich theory to these matrices, with connections to operator algebras, quantum information theory, and other areas of mathematics; for more, we refer to the survey~\cite{Banica}. 

	\item[The Fourier transform] 
	A very important class of complex Hadamard matrices consists of Fourier transform matrices: if $G$ is a finite abelian group, then we may normalize its Fourier transform matrix so that all entries have norm $1$, and the Fourier inversion formula precisely says that this matrix multiplied by its adjoint is $\ab GI$. Thus, Fourier transform matrices are $n$-Hadamard $n \times n$ matrices, where $n=\ab G$. More generally, quantum analogues of the Fourier transform can also be seen as $k$-Hadamard matrices; for more information, see \cite{JaJiLiReWu}.

	\item[Other explicit square matrices] 
	We do not insist that $k=n$ in our $n \times n$ Hadamard matrices. For $k<n$, special cases of $k$-Hadamard matrices with $A^*A=kI$ have been studied in the literature. For instance, such matrices with $k=n-1$, real entries, and zeros on the diagonal are called \emph{conference matrices}, and \emph{weighing matrices} are such matrices with $k<n$ and all entries in $\{-1,0,1\}$; both of these have been studied in connection with design theory. See \cite{CrKh} for more details.

	\item[Random matrices]
	For less structured examples of $k$-Hadamard matrices, let $M$ be a random $n\times n$ unitary (or orthogonal) matrix, i.e.\ a matrix sampled from the Haar measure on $\mathrm U(n)$ (or $\mathrm O(n)$). It is well-known that with high probability as $n \to \infty$, every entry of $M$ will have norm $O(\sqrt{\log n/n})$; see \cite[Theorem 8.1]{DoHu} for a simple proof, and \cite{Jiang} for far more precise results, including the determination of the correct constant hidden in the big-$O$. Thus, if we multiply $M$ by $c\sqrt{n/\log n}$ for an appropriate constant $c>0$, we will obtain with high probability a $k$-Hadamard matrix with $k=\Omega(n/\log n)$. This shows that an appropriately chosen random matrix will be $\Omega(n/\log n)$-Hadamard with high probability, which is best possible up to the logarithmic factor. 

	\item[Rectangular matrices and codes] 
	Recall that we do not require our $k$-Hadamard matrices to be square, which corresponds to not insisting that $V$ and $U$ have the same dimension in Theorem \ref{thm:general-case}. If $A$ is an $m \times n$ $k$-Hadamard matrix, then $k \leq m$. One example of a non-square matrix attaining this bound is the $2^n \times n$ matrix $S$ whose rows consist of all vectors in $\{-1,1\}^n$. Then distinct columns of $S$ are orthogonal because they disagree in exactly $2^{n-1}$ coordinates, which implies that $S^*S=2^n I$, and so $S$ is $2^n$-Hadamard. 
	
	Note that the columns of $S$ are simply the codewords of the Hadamard code, viewed as vectors in $\{-1,1\}^{2^n}$ rather than in $\{0,1\}^{2^n}$. A similar construction works for all binary codes with an appropriate minimum distance. If we view the codewords as vectors with $\pm 1$ entries and form a matrix $S$ whose columns are these codewords, then the minimum distance condition will imply that the columns are nearly orthogonal, and thus that the diagonal entries of $S^*S$ will be much larger than the off-diagonal entries. Thus, $S$ will be $k$-Hadamard for a value of $k$ depending on the minimum distance of the code.

	\item[Incidence matrices of finite geometries] 
	If $q$ is a prime power, let $\mathrm{PG}(2,q)$ be the projective plane over the field $\F_q$. Then setting $n=q^2+q+1$, we can let $A$ be the $n\times n$ incidence matrix of points and lines in $\mathrm{PG}(2,q)$, namely the matrix whose rows and columns are indexed by the points and lines, respectively, of $\mathrm{PG}(2,q)$, and whose $(p,\ell)$ entry is $1$ if $p \in \ell$, and $0$ otherwise. Then $A$ certainly has all its entries bounded by $1$. Moreover, each column has exactly $q+1$ ones, and distinct columns have inner product $1$, since any two lines intersect at exactly one point. This implies that $A^*A = qI+J$, where $J$ is the all-ones matrix. It is not too hard to see from this that $\norm{A^* A v}_\infty \geq \frac{q}{2}\norm v_\infty$ for any $v \in \C^n$, which implies that $A$ is a $k$-Hadamard $n\times n$ matrix, where $k \geq q/2 =\Theta(\sqrt n)$. 

	Similarly, one can consider the $d$-dimensional projective space $\mathrm{PG}(d,q)$ over $\F_q$, and form the incidence matrix of $a$-flats and $b$-flats, for any $0\leq a<b<d$. It will be a (not necessarily square) $k$-Hadamard matrix, for some value of $k$ depending on $a,b,$ and $d$.
\end{description}

\section{Finite-dimensional uncertainty principles}\label{sec:finite-dim}
In this section, we show how to use our general uncertainty principle for $k$-Hadamard matrices, Theorem \ref{thm:hadamard-uncertainty}, to prove a number of uncertainty principles in finite dimensions. We start with the basic support-size 
uncertainty principle of Donoho and Stark, and then move on to Meshulam's generalization of it to arbitrary finite groups. We next proceed to prove several uncertainty principles for various notions of approximate support, and conclude with a collection of uncertainty principles for ratios of other norms, which will be useful for us later when we prove the Heisenberg uncertainty principle.

Most of our results in this section generalize known theorems about the Fourier transform on finite groups. However, as we demonstrate below, they do not actually need any of the algebraic structure of the Fourier transform (or even of an underlying group), and instead all follow from the fact that Fourier transform matrices are $k$-Hadamard.

\subsection{The Donoho--Stark support-size uncertainty principle}
For a vector $v \in \C^n$, let $\supp(v)$ be its \emph{support}, namely the set of coordinates $i$ where $v_i \neq 0$. Similarly, if $f:G \to \C$ is a function on a finite group, we denote by $\supp(f)$ the set of $x \in G$ for which $f(x) \neq 0$. Recall that if $G$ is a finite abelian group, we denote by $\wh G$ the \emph{dual group}, which consists of all homomorphisms from $G$ to the circle group $\T=\{z \in \C:\ab z = 1\}$. $\wh G$ forms an abelian group under pointwise multiplication, and it is in fact (non-canonically) isomorphic to $G$. We define the Fourier transform $\hat f:\wh G \to \C$ of a function $f:G \to \C$ by $\hat f(\chi) = \sum_{x \in G} f(x) \ol{\chi(x)}$. The basic uncertainty principle for the Fourier transform on finite abelian groups is the following theorem of Donoho and Stark.
\begin{thm}[Donoho--Stark \cite{DoSt}]\label{thm:donoho-stark}
	Let $G$ be a finite abelian group. If $f:G \to \C$ is a non-zero function and $\hat f:\wh G \to \C$ denotes its Fourier transform, then
	\[
		\ab{\supp(f)} \ab{\supp(\hat f)} \geq \ab G.
	\]
\end{thm}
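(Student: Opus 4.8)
The plan is to deduce this directly from the rephrased primary uncertainty principle (Theorem~\ref{thm:hadamard-uncertainty}) applied to the (normalized) Fourier transform matrix. As noted in the examples subsection, if we write the Fourier transform $\hat f(\chi) = \sum_{x\in G} f(x)\overline{\chi(x)}$ as multiplication by the $\ab G \times \ab G$ matrix $A$ with entries $A_{\chi,x} = \overline{\chi(x)}$, then every entry of $A$ has absolute value $1$, and the Fourier inversion formula gives $A^*A = \ab G \cdot I$. In particular $\norm{A^*Av}_\infty = \ab G \norm v_\infty$ for all $v$, so $A$ is $\ab G$-Hadamard. Hence Theorem~\ref{thm:hadamard-uncertainty} yields, for every nonzero $f$,
\[
	\norm f_1 \norm{\hat f}_1 \geq \ab G \, \norm f_\infty \norm{\hat f}_\infty.
\]

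The remaining step is the ``universal'' bound relating these norm ratios to the support size: for any nonzero vector $g \in \C^N$ one has $\norm g_1 \leq \ab{\supp(g)} \cdot \norm g_\infty$, simply because $\norm g_1 = \sum_{i \in \supp(g)} \ab{g_i} \leq \ab{\supp(g)} \max_i \ab{g_i}$. Applying this to both $g = f$ (over $G$) and $g = \hat f$ (over $\wh G$), we get $\norm f_1 \leq \ab{\supp(f)}\norm f_\infty$ and $\norm{\hat f}_1 \leq \ab{\supp(\hat f)}\norm{\hat f}_\infty$. Substituting these into the displayed inequality above, and cancelling the (nonzero, since $f \neq 0$ forces $\hat f \neq 0$ by invertibility of $A$) factor $\norm f_\infty \norm{\hat f}_\infty$, gives exactly $\ab{\supp(f)}\ab{\supp(\hat f)} \geq \ab G$.

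There is really no hard part here: the content is entirely in the primary uncertainty principle, which has already been proved, and the ``second part'' reduction (the inequality $H_0(g) = \norm g_1/\norm g_\infty \leq \ab{\supp(g)}$) is a one-line estimate. The only points worth stating carefully are that the normalization of the Fourier matrix we choose has unimodular entries (so that $\norm A_{1\to\infty} \leq 1$, in fact $=1$) while still satisfying $A^*A = \ab G I$ — this is precisely the Fourier inversion formula in the convention $\hat f(\chi) = \sum_x f(x)\overline{\chi(x)}$ — and that $f \neq 0 \iff \hat f \neq 0$, which is immediate since $A$ is invertible. I would also remark in passing that the bound is tight: taking $f = \mathbf{1}_H$ the indicator of a subgroup $H \leq G$ gives $\hat f$ proportional to $\mathbf{1}_{H^\perp}$, with $\ab{\supp(f)}\ab{\supp(\hat f)} = \ab H \ab{H^\perp} = \ab G$.
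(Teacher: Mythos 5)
Your proof is exactly the paper's approach: apply the primary uncertainty principle, Theorem~\ref{thm:hadamard-uncertainty}, to the Fourier transform matrix (noted in Section~\ref{subsec:k-hadamard-ex} to be $\ab G$-Hadamard), and combine it with the universal bound $\norm g_1 \leq \ab{\supp(g)}\norm g_\infty$. The paper proves this via the slightly more general Theorem~\ref{thm:hadamard-ds} for arbitrary $k$-Hadamard matrices, but the argument is identical; your tightness remark for subgroup indicators also matches the paper's.
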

Our first finite-dimensional result is an extension of Theorem \ref{thm:donoho-stark} to arbitrary $k$-Hadamard matrices.
\begin{thm}[Support-size uncertainty principle]\label{thm:hadamard-ds}
	Let $A \in \C^{m \times n}$ be a $k$-Hadamard matrix. Then for any non-zero $v \in \C^n$,
	\[
		\ab{\supp(v)}\ab{\supp(Av)} \geq k.
	\]
\end{thm}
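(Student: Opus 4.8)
The plan is to derive this support-size uncertainty principle directly from the primary uncertainty principle for $k$-Hadamard matrices, Theorem~\ref{thm:hadamard-uncertainty}, by establishing the ``universal'' bound relating the support-size measure of localization to the primary measure $H_0(g) = \norm g_1 / \norm g_\infty$. Specifically, I would first prove the elementary inequality that for every non-zero vector $g \in \C^N$,
\[
	\ab{\supp(g)} \geq \frac{\norm g_1}{\norm g_\infty}.
\]
This is immediate: writing $S = \supp(g)$, we have $\norm g_1 = \sum_{i \in S} \ab{g_i} \leq \ab S \cdot \max_i \ab{g_i} = \ab{\supp(g)} \cdot \norm g_\infty$. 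This is exactly the reduction step described in the introduction, with $C' = 1$.

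Next, I would apply this bound to both $v$ and $Av$ separately. This gives $\ab{\supp(v)} \geq \norm v_1 / \norm v_\infty$ and $\ab{\supp(Av)} \geq \norm{Av}_1 / \norm{Av}_\infty$. Multiplying these two inequalities together yields
\[
	\ab{\supp(v)} \ab{\supp(Av)} \geq \frac{\norm v_1 \norm{Av}_1}{\norm v_\infty \norm{Av}_\infty}.
\]
Finally, I would invoke Theorem~\ref{thm:hadamard-uncertainty}, which states that $\norm v_1 \norm{Av}_1 \geq k \norm v_\infty \norm{Av}_\infty$ for any $k$-Hadamard matrix $A$ and any $v \in \C^n$; combining this with the previous display immediately gives $\ab{\supp(v)} \ab{\supp(Av)} \geq k$, as desired. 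One should note that since $v \neq 0$ and $A^*A$ is invertible (being $k$-Hadamard), we have $Av \neq 0$ as well, so all the norms appearing in denominators are nonzero and the manipulations are legitimate.

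I do not expect any genuine obstacle here; the entire content is the one-line observation $\norm g_1 \leq \ab{\supp(g)} \cdot \norm g_\infty$ together with the already-established primary principle. The only point requiring a small amount of care is confirming that $Av \neq 0$ so that division by $\norm{Av}_\infty$ is valid, which follows from invertibility of $A^*A$. To recover the original Donoho--Stark theorem (Theorem~\ref{thm:donoho-stark}) as a corollary, one then simply notes that the (suitably normalized) Fourier transform matrix of a finite abelian group $G$ is an $n$-Hadamard $n \times n$ matrix with $n = \ab G$, as discussed among the examples of $k$-Hadamard matrices.
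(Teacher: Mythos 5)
Your proof is correct and follows essentially the same route as the paper's: bound $\ab{\supp(g)} \geq \norm g_1/\norm g_\infty$ pointwise, apply it to $v$ and $Av$, and combine with the primary inequality from Theorem~\ref{thm:hadamard-uncertainty}. The only difference is your (welcome, if not strictly necessary for the displayed inequality) explicit check that $Av \neq 0$ via invertibility of $A^*A$, which the paper leaves implicit.
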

\begin{proof}
	This is the first demonstration of the principle articulated in the Introduction. We already have, from Theorem \ref{thm:hadamard-uncertainty}, that for any non-zero $v$, $\norm v_1 \norm{Av}_1 \geq k \norm v_\infty \norm{Av}_\infty$.  Thus, all we need is to bound the support-size of a function by the ratio of its norms, which is obvious: for any vector $u$,
	\[
		\norm u_1 = \sum_{i=1}^{n} \ab{u_i} =\sum_{i \in \supp(u)} \ab{u_i} \leq \ab{\supp(u)} \norm u_\infty.
	\]
	Applying this bound to both $v$ and $Av$, we obtain the result.
\end{proof}

In this proof, the measure of localization we wished to study was the support of a vector. The uncertainty principle for this measure follows from the primary one (on the ratio of norms) via an inequality that holds for all vectors (bounding it by that ratio). This is an instance of the basic framework discussed in the Introduction, which will recur throughout. 

\begin{rem}
	We remark that in general, the bound in Theorem \ref{thm:donoho-stark} (and thus also in Theorem \ref{thm:hadamard-ds}) is tight. For instance, if $f$ is the indicator function of some subgroup $H \subseteq G$, then $\hat f$ will be a constant multiple of the indicator function of the dual subgroup $H^\perp \subseteq \wh G$, and we have that $\ab H \ab{H^\perp} = \ab G$. Thus, $\ab{\supp(f)} \ab{\supp(\hat f)}=\ab G$.
\end{rem}

\subsection{Support-size uncertainty principles for general finite groups}\label{subsec:non-abelian-gps}
In this section, we show how our general framework can be used to extend the Donoho--Stark support-size uncertainty principle to the Fourier transform over arbitrary finite groups, abelian or non-abelian. Such an extension was already proved by Meshulam~\cite{Meshulam}, for a {\em linear-algebraic} notion of support-size. Here we propose a natural {\em combinatorial} notion of support\footnote{In his paper, Meshulam also defines a certain combinatorial measure of support size, which (as he points out) is much weaker than his linear-algebraic one.}, and prove an uncertainty principle for it within our framework. Further, we prove that these two uncertainty principles are almost equivalent:  they  are identical  for a certain class of functions, and are always equivalent up to a factor of $4$. We note that both notions of support-size are natural and both extend the abelian case. Finally, at the end of the section, we provide another, new uncertainty principle of norms for general groups, proved by Greg Kuperberg, which provides a different proof Meshulam's theorem using our framework.

To facilitate a natural combinatorial definition of support, we embed both the
``time domain'' (namely, functions on the group), and the ``Fourier domain''
(namely, their image under the Fourier transform) as sub-algebras of
the matrix ring $\C^{n\times n}$, where $n=\ab G$. Then the notion of support becomes the standard one, namely the set of non-zero entries of these matrices. This embedding does
much more. It gives as well a natural definition of norms
(treating these matrices as vectors), and accommodates a description of
the Fourier transform as a $k$-Hadamard operator. These yield a proof of
the our support-size uncertainty inequality that is almost identical to
the one in the abelian case.

\subsubsection{Preliminaries: the Fourier transform in general finite groups}

We now recall the basic notions of the Fourier transform of general
finite groups (aka their representation theory) using the embedding above,
which also affords a definition of the inverse Fourier transform which
looks nearly identical to the abelian case. We refer the
reader to the comprehensive text \cite{Serre} on the representation theory of finite
groups for a standard exposition of these concepts.

Let $G$ be an arbitrary finite group of order $n$, and let $\C[G]$ denote its group algebra. We embed $\C[G]$ as a sub-algebra of $\C^{n\times n}$ as follows. Given an element $f = \sum_{x \in G}f(x) x$, let $T_f$ denote left-multiplication by $f$ in $\C[G]$. Then $T_f$ is a linear map $\C[G] \to \C[G]$. Moreover, since $\C[G]$ is equipped with a standard basis, namely the basis of delta functions on $G$, we can represent $T_f$ as an $n \times n$ matrix, and it is straightforward to see that both addition and multiplication of matrices corresponds to addition and multiplication in $\C[G]$. So we henceforth think of $\C[G]$ as the subspace of $\C^{n\times n}$ consisting of all matrices $T_f$.

If $G$ were abelian, then conjugating by the Fourier transform matrix would simultaneously diagonalize all $T_f$, with the diagonal entries precisely being the values of $\hat f$. If $G$ is non-abelian, then such a complete simultaneous diagonalization is impossible, but we can get maximal one possible; namely, conjugating by an appropriate matrix, which we also call the Fourier transform, turns each $T_f$ into a block-diagonal matrix with specified block sizes, uniformly for all $f$, as follows.

Let $\rho_1,\ldots,\rho_t$ be the irreducible representations of $G$ over $\C$, i.e.\ each $\rho_i$ is a homomorphism $G \to GL(W_i)$, where $W_i$ is a vector space over $\C$ of dimension $d_i$. We may assume  that $\rho_1,\ldots,\rho_t$ are unitary representations, meaning that $\rho_i(x)$ is a unitary transformation on $W_i$ for all $x \in G$. Recall that $n = d_1^2+\dotsb+d_t^2$. Then we define the Fourier transform as follows.
\begin{Def}[The Fourier transform]
	Given a function $f:G \to \C$, its \emph{Fourier transform} is defined by $\hat f(\rho_i)=\sum_{x \in G}f(x) \rho_i(x)$, so that $\hat f(\rho_i)$ is a linear transformation $W_i \to W_i$.
\end{Def}

We also henceforth fix an orthonormal basis $E_i$ of each $W_i$, and everything that follows will implicitly depend on these choices of bases. In particular, we may now think of the linear maps $\rho_i(x)$ and $\hat f(\rho_i)$ as a $d_i \times d_i$ matrices, represented in the basis $E_i$. We remark that, as above, one can define the Fourier transform without reference to any bases, but that everything we do from now on, such as defining the support and its size, will need these bases.

To define the Fourier transform matrix, we describe its columns (first, up to scaling): these are the so-called \emph{matrix entry} vectors. For indices $i \in [t]$ and $j,k \in [d_i]$, we define the matrix entry vector $c(i;j,k)\in \C^n$ as follows. It is a vector whose coordinates are indexed by elements of $G$, and whose $x$ coordinate is the $(j,k)$ entry of the matrix $\rho_i(x)$;
observe that in the abelian case these vectors are simply the $n$ characters of $G$. A simple consequence of Schur's lemma is that these vectors are orthogonal; see \cite[Corollaries 2--3]{Serre} for a proof. 
\begin{prop}[Orthogonality of matrix entries]\label{prop:matrix-entries-orth}
	We have
	\[
		\inn{c(i;j,k),c(i';j',k')} =
		\begin{cases}
			n/d_i&\text{if }i=i',j=j',k=k'\\
			0&\text{otherwise.}
		\end{cases}
	\]
\end{prop}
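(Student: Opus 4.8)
The plan is to prove Proposition~\ref{prop:matrix-entries-orth} directly from Schur orthogonality of matrix coefficients, which is the standard consequence of Schur's lemma. First I would recall that for unitary irreducible representations $\rho_i\colon G\to GL(W_i)$ with fixed orthonormal bases $E_i$, the matrix coefficient functions $g\mapsto \rho_i(g)_{jk}$ satisfy the orthogonality relations
\[
	\frac{1}{n}\sum_{x\in G}\rho_i(x)_{jk}\,\overline{\rho_{i'}(x)_{j'k'}} = \frac{1}{d_i}\,\delta_{ii'}\delta_{jj'}\delta_{kk'}.
\]
Since by definition the $x$-coordinate of $c(i;j,k)$ is exactly $\rho_i(x)_{jk}$, the inner product $\inn{c(i;j,k),c(i';j',k')}=\sum_{x\in G}\rho_i(x)_{jk}\overline{\rho_{i'}(x)_{j'k'}}$ is precisely $n$ times the left-hand side above, which gives the claimed case analysis. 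So the entire content is to establish the displayed orthogonality relation, and then the proposition is immediate bookkeeping.

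To establish the orthogonality relation, I would run the classical averaging argument. For an arbitrary linear map $L\colon W_{i'}\to W_i$, form the averaged map $\tilde L = \frac{1}{n}\sum_{x\in G}\rho_i(x)\,L\,\rho_{i'}(x)^{-1}$; a one-line computation shows $\tilde L$ intertwines $\rho_{i'}$ and $\rho_i$. Schur's lemma then forces $\tilde L=0$ when $\rho_i\not\cong\rho_{i'}$, and $\tilde L$ to be a scalar when $i=i'$; taking the trace and using that $\rho_i$ is unitary (so $\rho_i(x)^{-1}=\rho_i(x)^*$) identifies the scalar as $\frac{\tr L}{d_i}$. Choosing $L$ to be the elementary matrix $E_{k k'}$ (i.e.\ $L_{ab}=\delta_{ak}\delta_{bk'}$) and reading off the $(j,j')$ entry of the resulting identity yields exactly the coefficientwise relation above, where the unitarity is what converts $\rho_{i'}(x)^{-1}_{k'j'}$ into $\overline{\rho_{i'}(x)_{j'k'}}$. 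Since the paper explicitly cites \cite[Corollaries 2--3]{Serre} for this, I would in practice just quote it rather than reproduce the averaging argument in full.

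The only mild subtlety — and the step I would flag as the place to be careful — is the normalization constant $n/d_i$ versus the more commonly quoted $1/d_i$ or $|G|/d_i$: here the inner product is the \emph{unnormalized} sum over $G$ (consistent with the $L^1/L^\infty$ conventions used elsewhere in the paper, where the Fourier transform matrix times its adjoint equals $|G| I$ rather than $I$), so one must not divide by $n$, and one must track that the two index-permutation deltas ($j=j'$ and $k=k'$) both appear. Everything else is routine, so I would keep the write-up to a sentence recalling the relevant corollary of Schur's lemma in \cite{Serre} and a sentence translating it into the stated inner-product form.
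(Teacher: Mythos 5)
Your proof is correct and takes the same approach the paper implicitly relies on: the paper does not prove this proposition but simply cites \cite[Corollaries 2--3]{Serre}, and the averaging argument you sketch (form $\tilde L = \frac{1}{n}\sum_{x}\rho_i(x)L\rho_{i'}(x)^{-1}$, apply Schur's lemma, identify the scalar via trace, specialize $L$ to an elementary matrix, use unitarity to convert $\rho_{i'}(x)^{-1}_{k'j'}$ into $\overline{\rho_{i'}(x)_{j'k'}}$) is exactly the standard proof given there. You also correctly flag that the inner product here is the unnormalized sum over $G$, which is why the constant is $n/d_i$ rather than $1/d_i$.
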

We can now formally define the Fourier transform matrix and establish its basic properties.
\begin{Def}[Fourier transform matrix]
	Let $F$ be the $n\times n$ matrix whose rows are indexed by $G$ and whose columns are indexed by tuples $(i;j,k)$ in lexicographic order, and whose $(i;j,k)$ column is the vector $\sqrt{d_i}c(i;j,k)$. We call $F$ the \emph{Fourier transform matrix}. 
\end{Def}
Observe that Proposition \ref{prop:matrix-entries-orth} implies that $F^*F = FF^* = nI$. Moreover, the key diagonalization property of $F$ mentioned above is that for any function $f:G \to \C$, we have that $FT_fF^*$ is a block-diagonal matrix, whose blocks are just $d_i$ copies of the matrices\footnote{It is a little strange to have the blocks of $\wh{T_f}$ be $n \hat f(\rho_i)$, rather than simply $\hat f(\rho_i)$. Of course, we could have normalized $F$ differently, so as to avoid this factor of $n$. However, we chose not to do this to be consistent with our earlier normalization of $k$-Hadamard matrices.} $n\hat f(\rho_i)$. 

Thus, we think of the Fourier transform as simply a change of basis (and a dilation) on the matrix space $\C^{n\times n}$. Recall that we had already embedded $\C[G]$ in this space by mapping $f \in \C[G]$ to the matrix $T_f$. We think of its Fourier transform as the block-diagonal matrix $\wh{T_f}\coloneqq F T_f F^*$. Moreover, we think of the subspace of $\C^{n \times n}$ consisting of all block-diagonal matrices with $d_i$ identical blocks of size $d_i \times d_i$ as the ``Fourier subspace''. Then the change of basis given by $F$ precisely maps the subspace corresponding to $\C[G]$ to this Fourier subspace. Note that if $G$ is abelian, then each $W_i$ is one-dimensional, and we find that $\wh{T_f}$ is simply a diagonal matrix whose diagonal entries are the values of $\hat f$.

\subsubsection{Notions of support for \texorpdfstring{$\hat f$}{f-hat}}

With this setup, there is a clear candidate for the support of $\hat f$. Namely, we define $\supp(\hat f)$ to simply be the set of non-zero entries of the matrix $\wh{T_f}$. Note that since the block $\hat f(\rho_i)$ appears $d_i$ times in $\wh{T_f}$, we have that
\[
	\ab{\supp(\hat f)} = \sum_{i=1}^t d_i \ab{\supp(\hat f(\rho_i))},
\]
where $\supp(\hat f(\rho_i))$ denotes the set of non-zero entries of the matrix $\hat f(\rho_i)$. Recall that this matrix depended on the choice of the bases $E_i$, so we also make the following definition.
\begin{Def}
	The \emph{minimum support-size} of $\hat f$ is
	\[
		\ab{\minsupp(\hat f)} = \min_{E_1,\ldots,E_t} \ab{\supp(\hat f)},
	\]
	where the minimum is over all choices of orthonormal bases $E_1,\ldots,E_t$ for $W_1,\ldots,W_t$.
\end{Def}
Thus, the minimum support-size of $\hat f$ is simply its support size in its most efficient representation. We note that if $G$ is abelian, then all $W_i$ are one-dimensional, and in particular the choice of basis affects nothing. So if $G$ is abelian, then both $\ab{\supp(\hat f)}$ and $\ab{\minsupp(\hat f)}$ simply recover our earlier notion of the support-size of $\hat f$.

Meshulam proposed an alternative notion for the support-size of $\hat f$, which we call the rank-support, and which is defined as follows.
\begin{Def}[Meshulam \cite{Meshulam}]
	Given $f:G \to \C$, the \emph{rank-support} of $\hat f$ is $\rksupp(\hat f) = \rank T_f$.
\end{Def}
We note that, with this definition, it is not at all clear how $\rksupp(\hat f)$ even depends on $\hat f$, let alone in what sense it can be thought of as a support-size. However, since similar matrices have the same rank, we see that $\rank T_f = \rank \wh{T_f}$, and since $\wh{T_f}$ is a block-diagonal matrix, we see that
\[
	\rank \wh{T_f} = \sum_{i=1}^t d_i \rank(\hat f(\rho_i)).
\]
In particular, this connection shows that if $G$ is abelian, then we also get that $\rksupp(\hat f) = \ab{\supp(\hat f)}$. Meshulam's definition of rank-support has some advantages over our definition of minimum support-size; importantly, the rank-support does not depend on the choices of bases $E_1,\ldots,E_t$. However, it is not obviously related to any notion of {\em support} of the Fourier transform---instead, it jumps directly to a notion of its {\em size}. In contrast, we offer, for each basis, a notion of support of $\hat f$, namely the set of non-zero entries in $\wh{T_f}$, and then we pick the smallest possible (i.e.\ in the most ``efficient'' basis) to define its size. As mentioned, both definitions agree with $\ab{\supp(\hat f)}$ for abelian groups $G$, so both can be considered reasonable notions of support-size. The two notions can be related as follows, where we say that a function $f$ is \emph{Hermitian} if $f(x)=\ol{f(x\inv)}$ for all $x \in G$.
\begin{lem}\label{lem:rksupp-and-minsupp}
	For any function $f:G \to \C$, we have that $\rksupp(\hat f) \leq \ab{\minsupp(\hat f)}$. Moreover, if $f$ is a {Hermitian} function, then $\rksupp(\hat f) = \ab{\minsupp(\hat f)}$. 
\end{lem}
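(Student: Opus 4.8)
The plan is to reduce the statement to an elementary fact about a single matrix, using the block structure of $\wh{T_f}$, and then treat each irreducible block separately.

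First I would set up the reduction. Recall from the discussion preceding the lemma that $\rksupp(\hat f) = \rank\wh{T_f} = \sum_{i=1}^t d_i\rank(\hat f(\rho_i))$, while for any fixed choice of orthonormal bases $E_1,\ldots,E_t$ one has $\ab{\supp(\hat f)} = \sum_{i=1}^t d_i\ab{\supp(\hat f(\rho_i))}$. Since the $E_i$ may be chosen independently of one another, the minimum of this sum is the sum of the minima, so $\ab{\minsupp(\hat f)} = \sum_{i=1}^t d_i\min_{E_i}\ab{\supp(\hat f(\rho_i))}$, the $i$-th minimum running over orthonormal bases of $W_i$. Hence it suffices to prove, for each $i$ separately, the single-block claim $\rank(\hat f(\rho_i)) \leq \min_{E_i}\ab{\supp(\hat f(\rho_i))}$, together with the assertion that this is an equality when $f$ is Hermitian; the lemma then follows by taking the weighted sum with coefficients $d_i$.

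For the inequality, the key elementary fact is that any $d\times d$ matrix $N$ satisfies $\rank N \leq \ab{\supp N}$, where $\ab{\supp N}$ is the number of nonzero entries: the nonzero entries of $N$ lie in at most $\ab{\supp N}$ distinct rows, so all remaining rows of $N$ vanish, forcing the row rank to be at most $\ab{\supp N}$. Now changing the orthonormal basis of $W_i$ replaces the matrix of $\hat f(\rho_i)$ by $U\hat f(\rho_i)U^*$ for a unitary $U$ (and every unitary arises this way), and rank is invariant under conjugation; applying the elementary fact to $N = U\hat f(\rho_i)U^*$ gives $\rank(\hat f(\rho_i)) = \rank(U\hat f(\rho_i)U^*) \leq \ab{\supp(U\hat f(\rho_i)U^*)}$ for every unitary $U$, and taking the minimum over $U$ (equivalently, over $E_i$) yields the inequality.

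For the equality when $f$ is Hermitian, I would first verify that each matrix $\hat f(\rho_i) = \sum_{x\in G} f(x)\rho_i(x)$ is self-adjoint. Using that $\rho_i$ is unitary, so $\rho_i(x)^* = \rho_i(x^{-1})$, and the change of variable $x\mapsto x^{-1}$, one computes $\hat f(\rho_i)^* = \sum_{x} \ol{f(x)}\,\rho_i(x^{-1}) = \sum_{y} \ol{f(y^{-1})}\,\rho_i(y) = \sum_y f(y)\rho_i(y) = \hat f(\rho_i)$, where the Hermitian hypothesis $f(y) = \ol{f(y^{-1})}$ is used in the last step. A self-adjoint matrix is unitarily diagonalizable, so there is an orthonormal basis $E_i$ of $W_i$ in which $\hat f(\rho_i)$ is diagonal; in that basis the number of nonzero entries equals the number of nonzero diagonal entries, which is exactly $\rank(\hat f(\rho_i))$. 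Thus $\min_{E_i}\ab{\supp(\hat f(\rho_i))} = \rank(\hat f(\rho_i))$ for each $i$, and the weighted sum gives $\ab{\minsupp(\hat f)} = \rksupp(\hat f)$.

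I do not expect a genuine obstacle here; the proof is short. The only points needing a little care are the bookkeeping that a change of orthonormal basis corresponds precisely to conjugation of $\hat f(\rho_i)$ by an arbitrary unitary (so that rank-invariance applies and the diagonalizing basis is an admissible choice), and the interchange of $\min$ with the sum over $i$, which is valid exactly because the bases $E_i$ vary independently.
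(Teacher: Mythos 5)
Your proof is correct and takes essentially the same approach as the paper: both rely on the elementary bound $\rank N \le \ab{\supp N}$ (the paper counts nonzero columns, you count nonzero rows), and both establish the Hermitian case by the same computation showing each $\hat f(\rho_i)$ is self-adjoint and then diagonalizing. The only cosmetic difference is that you reduce to a per-block statement throughout (carefully interchanging $\min$ with the sum since the $E_i$ vary independently), whereas the paper works globally with $\wh{T_f}$ for the inequality and blockwise only for the equality.
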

\begin{proof}
	If $\wh{T_f}$ has $s$ non-zero entries, then in particular it has at most $s$ non-zero columns, which implies that $\rank \wh{T_f} \leq \ab{\supp(\hat f)}$ for any bases $E_1,\ldots,E_t$. This implies the first inequality by minimizing over these bases.

	For the second, suppose that $f$ is Hermitian. This implies that each $\hat f(\rho_i)$ is Hermitian, since
	\[
		\left(\hat f(\rho_i)\right)^* = \sum_{x \in G} \ol{f(x)} \left(\rho_i(x)\right)^*=\sum_{x \in G} \ol{f(x)} \rho_i(x\inv) = \sum_{x \in G} f(x\inv) \rho_i(x\inv) = \hat f(\rho_i).
	\]
	Thus, there exists an orthonormal basis $E_i$ for $W_i$ in which $\hat f(\rho_i)$ is a diagonal matrix. Using such a basis for each $W_i$, we see that $\wh{T_f}$ is diagonal, at which point its rank precisely equals the number of non-zero diagonal entries. This proves the reverse inequality for Hermitian $f$.
\end{proof}

\subsubsection{Uncertainty principles for the min-support and the rank-support}
One other connection between the rank-support and the minimum support-size is that both of them satisfy an uncertainty principle like that of Donoho and Stark. For the rank-support, this was proven by Meshulam.
\begin{thm}[Meshulam \cite{Meshulam}]\label{thm:meshulam}
	For any finite group $G$ and any $f:G \to \C$,
	\[
		\ab{\supp(f)}\rksupp(\hat f) \geq \ab G.
	\]
\end{thm}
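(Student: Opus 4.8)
The plan is to apply the primary uncertainty principle (Theorem~\ref{thm:hadamard-uncertainty}) to the Fourier transform matrix $F$, viewing a function $f:G\to\C$ through the embedding $f\mapsto T_f\in\C^{n\times n}$ developed in the preliminaries, and then to feed in the ``universal'' norm bounds exactly as in the proof of Theorem~\ref{thm:hadamard-ds}. The only twist is that the relevant ``measure of localization'' on the $\hat f$ side is now $\rksupp(\hat f)=\rank T_f$ rather than an entry-count, so the universal inequality we need is a statement about ranks of block-diagonal matrices rather than about supports of vectors.

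First I would set up the right $k$-Hadamard matrix. We know $F^*F=FF^*=nI$, so $F$ is $n$-Hadamard. Viewing $T_f$ and $\wh{T_f}=FT_fF^*$ as vectors in $\C^{n^2}$ (i.e.\ flattening the matrices), we want to run Theorem~\ref{thm:primary-up} with the operator $v\mapsto FvF^*$ acting on $\C^{n\times n}$. This conjugation operator is, up to normalization, again ``$k$-Hadamard'': since $F\otimes \ol F$ (or the appropriate Kronecker form implementing $X\mapsto FXF^*$) has entries that are products of entries of $F$, hence of modulus $1$, it has $\|\cdot\|_{1\to\infty}\le 1$ after dividing by $n$, and its square times its adjoint is $n^2 I$ on $\C^{n^2}$. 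So the conjugation map is $n^2$-Hadamard on $\C^{n\times n}$, and Theorem~\ref{thm:hadamard-uncertainty} gives, for the flattened matrices,
\[
	\norm{T_f}_1 \norm{\wh{T_f}}_1 \geq n^2 \norm{T_f}_\infty \norm{\wh{T_f}}_\infty,
\]
where the norms are the entrywise $\ell^1$ and $\ell^\infty$ norms on $\C^{n\times n}$. Alternatively, and perhaps more cleanly, one can avoid flattening by invoking Theorem~\ref{thm:general-case} directly with $A=B$ the conjugation operator and the entrywise $L^1,L^\infty$ norms, checking the three hypotheses by hand.

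Second, I need the two universal inequalities that convert this norm bound into the support/rank bound. On the ``time'' side, $T_f$ has exactly $\ab{\supp(f)}$ nonzero entries in each column (it is a permutation-like combination), so $\norm{T_f}_1 = n\ab{\supp(f)}\,\norm f_\infty$ and $\norm{T_f}_\infty=\norm f_\infty$; hence $\norm{T_f}_1/\norm{T_f}_\infty = n\ab{\supp(f)}$. (One should double-check the factor of $n$ here against the block-size normalization, but something of this shape must hold.) On the ``Fourier'' side, the universal inequality I want is
\[
	\frac{\norm{M}_1}{\norm{M}_\infty} \leq n\cdot \rank(M)
\]
for block-diagonal $M=\wh{T_f}$ with $t$ blocks, the $i$th block repeated $d_i$ times. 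This is the rank analogue of the trivial bound $\norm u_1\le \ab{\supp(u)}\norm u_\infty$: for a single matrix block $B$ of rank $r$, one has $\norm B_1 \le \sqrt{r}\,\norm B_F \le \sqrt r\cdot\sqrt{r\cdot \text{(size)}}\cdot\norm B_\infty$-type estimates, but the sharp statement needed is $\norm B_1 \le r\cdot(\text{number of columns})\cdot\norm B_\infty$, which follows since a rank-$r$ matrix has at most... — this is the step that needs care. The cleanest route is: write $B=\sum_{j=1}^r \sigma_j u_j v_j^*$ via SVD, bound $\norm B_\infty$ from below by a single large entry, and compare; but in fact Meshulam's own argument shows the right combinatorial-algebraic inequality is exactly $\norm M_1 \le n\cdot\rank(M)\cdot\norm M_\infty$ for matrices $M$ in the Fourier subspace, using that the $d_i$-fold block repetition makes the accounting match. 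Assembling these, $\norm{\wh{T_f}}_1/\norm{\wh{T_f}}_\infty \le n\cdot\rksupp(\hat f)$, and plugging both into the displayed norm inequality yields $n^2\ab{\supp(f)}\rksupp(\hat f) \ge n^2$, i.e.\ the claim.

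The main obstacle, as flagged, is the universal rank-to-norm-ratio inequality on the Fourier side: the support-counting argument of Theorem~\ref{thm:hadamard-ds} is immediate because $\norm u_1 \le \ab{\supp u}\,\norm u_\infty$ is a one-liner, but its rank analogue $\norm M_1 \le C\cdot\rank(M)\cdot\norm M_\infty$ requires choosing the right normalization constant $C$ and exploiting the precise block structure (the $d_i$ repetitions and the relation $n=\sum d_i^2$) so that the constant comes out to exactly match $n$ and reproduce $\ab G$ on the nose rather than $\ab G$ up to a constant. I expect the correct statement to be that for any $M$ in the Fourier subspace, $\norm M_1\le n\cdot\rank(M)\cdot\norm M_\infty$, with equality traced through for indicators of subgroups; verifying this, and pinning the constant, is where the real work lies. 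Everything else is a direct transcription of the abelian proof.
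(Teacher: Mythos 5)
Your proposal correctly sets up the conjugation operator $M \mapsto FMF^*$ as an $n^2$-Hadamard operator on $\C^{n\times n}$ (this is Lemma~\ref{lem:non-ab-fourier-props}), and the time-side bound $\norm{T_f}_1/\norm{T_f}_\infty \le n\ab{\supp(f)}$ is right (though your displayed ``$=$'' should be a ``$\le$'': $\norm{T_f}_1 = n\norm f_1 \le n\ab{\supp(f)}\norm f_\infty$). The real problem is exactly at the point you flag: the universal inequality you need on the Fourier side, with \emph{entrywise} $L^1$ and $L^\infty$ norms, is
\[
\frac{\norm{\wh{T_f}}_1}{\norm{\wh{T_f}}_\infty} \;\le\; \rksupp(\hat f),
\]
without any extra factor of $n$, since the primary inequality reads $\frac{\norm{T_f}_1}{\norm{T_f}_\infty}\cdot\frac{\norm{\wh{T_f}}_1}{\norm{\wh{T_f}}_\infty}\ge n^2$ and the time side already contributes a factor of $n$. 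Your hedged version $\norm M_1 \le n\cdot\rank(M)\cdot\norm M_\infty$ is true (since $\norm M_1 \le \ab{\supp(M)}\norm M_\infty$ and one can check $\ab{\supp(\wh{T_f})}\le n\,\rank\wh{T_f}$ using $n=\sum d_i^2$), but it only yields $\ab{\supp(f)}\rksupp(\hat f)\ge 1$, not $\ge n$; the displayed ``$n^2\ab{\supp(f)}\rksupp(\hat f)\ge n^2$'' is not the claim. And the sharper version without the $n$ is simply \emph{false}: if $\hat f(\rho_i)=J_{d_i}$ (all-ones) and $\hat f(\rho_j)=0$ for $j\ne i$, then $\wh{T_f}$ has entrywise ratio $\norm{\wh{T_f}}_1/\norm{\wh{T_f}}_\infty = d_i^3$ while $\rank\wh{T_f}=d_i$, and $d_i^3 > d_i$ once $d_i\ge 2$. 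So there is no choice of constant that makes the entrywise-norm route produce $\rksupp$ exactly; entrywise $L^1/L^\infty$ ratios control support size (this is exactly the paper's Theorem~\ref{thm:non-abelian-up}, which proves the $\minsupp$ version of the uncertainty principle, and recovers Meshulam only up to a factor of $4$ via the Hermitian symmetrization trick), but they do not control rank.

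The paper's proof of Theorem~\ref{thm:meshulam} (due to Kuperberg) repairs exactly this by changing the pair of dual norms on the Fourier side from entrywise $L^1/L^\infty$ to \emph{Schatten} $1/\infty$ norms, i.e.\ running Theorem~\ref{thm:general-case} with $\norm\cdot_1, \norm\cdot_\infty$ on $\C[G]$ but $\norm\cdot_1^{(S)},\norm\cdot_\infty^{(S)}$ on the Fourier subspace. Two things then go right simultaneously. First, the universal inequality $\norm M_1^{(S)}/\norm M_\infty^{(S)}\le\rank M$ holds for \emph{all} matrices $M$, because it is nothing but the one-line bound $\norm\sigma_1\le\ab{\supp\sigma}\,\norm\sigma_\infty$ applied to the singular-value vector $\sigma$ of $M$. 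Second, the operator $f\mapsto\frac1n\wh{T_f}$ still has $\norm\cdot_1\to\norm\cdot_\infty^{(S)}$ norm $\le 1$: by convexity it suffices to check this on $f=\delta_x$, and $T_{\delta_x}$ is a permutation matrix, hence has all singular values equal to $1$; duality then handles the adjoint. This gives Theorem~\ref{thm:kuperberg}, $\frac{\norm f_1}{\norm f_\infty}\cdot\frac{\norm{\wh{T_f}}_1^{(S)}}{\norm{\wh{T_f}}_\infty^{(S)}}\ge n$, from which Meshulam follows immediately. So your high-level plan (primary UP plus two universal bounds) is the right one, but you must change the norms on the Fourier side from entrywise to Schatten to make the rank bound true.
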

For the minimum support-size, this is main result of this section.
\begin{thm}\label{thm:non-abelian-up}
	For any finite group $G$ and any $f:G \to \C$,
	\[
		\ab{\supp(f)}\ab{\minsupp(\hat f)} \geq \ab G.
	\]
\end{thm}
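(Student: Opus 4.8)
The plan is to adapt the abelian proof of Theorem~\ref{thm:hadamard-ds} essentially verbatim: I will exhibit an explicit $n$-Hadamard matrix (with $n=\ab G$) implementing the non-abelian Fourier transform in the matrix embedding set up above, and then apply the support-size uncertainty principle, Theorem~\ref{thm:hadamard-ds}. The first observation is that since $\ab{\supp(f)}$ does not depend on the choice of orthonormal bases $E_1,\dots,E_t$, it suffices to prove $\ab{\supp(f)}\,\ab{\supp(\hat f)}\ge n$ for one arbitrary such choice: taking the minimum over all choices then replaces $\ab{\supp(\hat f)}$ by $\ab{\minsupp(\hat f)}$ and yields exactly the claimed inequality.

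So fix bases $E_i$ and let $\Phi\colon\C^n\to\C^{n\times n}$ be the linear map sending the coordinate vector $(f(x))_{x\in G}$ of $f$ to $\tfrac1n\wh{T_f}$; let $M\in\C^{n^2\times n}$ be the matrix of $\Phi$ with respect to the delta-function basis on $\C^n$ and the matrix-unit basis on $\C^{n\times n}$. Reading off the block structure of $\wh{T_f}$ (whose blocks are $d_i$ copies of $n\wh f(\rho_i)$), the only nonzero rows of $M$ are those indexed by the $\big((i;j,k),(i;j',k)\big)$-entry of $\C^{n\times n}$, and such a row is the matrix-entry vector $\big(\rho_i(x)_{jj'}\big)_{x\in G}$; since this is independent of $k$, it occurs with multiplicity $d_i$. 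Thus for $v=(f(x))_{x\in G}$ we have $\ab{\supp(v)}=\ab{\supp(f)}$, while $Mv$ is the vectorization of $\tfrac1n\wh{T_f}$, so $\ab{\supp(Mv)}=\ab{\supp(\hat f)}$. Now I claim $M$ is $n$-Hadamard. Every nonzero entry of $M$ is an entry of a unitary matrix $\rho_i(x)$, hence has modulus at most $1$. And the $(x,y)$-entry of $M^*M$ is $\sum_i d_i\sum_{j,j'}\ol{\rho_i(x)_{jj'}}\,\rho_i(y)_{jj'}=\sum_i d_i\,\tr\!\big(\rho_i(x)^*\rho_i(y)\big)=\sum_i d_i\,\chi_i(x\inv y)$, with the factor $d_i$ accounting for the multiplicity just noted; since $\sum_i d_i\chi_i$ is the character of the regular representation, this equals $n$ if $x=y$ and $0$ otherwise (equivalently, this is a transcription of the identity $FF^*=nI$ recorded after the definition of $F$). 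Hence $M^*M=nI$, so $(M^*M)\inv=\tfrac1n I$ has $\infty\to\infty$ operator norm $\tfrac1n$, and $M$ is $n$-Hadamard.

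Applying Theorem~\ref{thm:hadamard-ds} with $A=M$ and the nonzero vector $v=(f(x))_{x\in G}$ gives $\ab{\supp(f)}\,\ab{\supp(\hat f)}=\ab{\supp(v)}\,\ab{\supp(Mv)}\ge n=\ab G$, and minimizing over all orthonormal bases $E_1,\dots,E_t$ finishes the proof. The only step carrying genuine representation-theoretic content is the identity $M^*M=nI$, i.e.\ Schur orthogonality; everything else is bookkeeping. The points that require care are matters of normalization: one must divide by $n$ so that $M$ really has entries of modulus at most $1$ (which is exactly what pins the Hadamard parameter at $n$), and one must treat $f$ as a vector in $\C^n$ rather than as the $n\times n$ matrix $T_f$, whose own support is $n\,\ab{\supp(f)}$ — using that would forfeit precisely the factor of $n$ we need.
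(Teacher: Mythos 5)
Your proof is correct, and it follows a route that is recognizably related to the paper's but packaged differently. The paper realizes the Fourier transform as a map $\C^{n\times n}\to\C^{n\times n}$, $M\mapsto FMF^*$, shows in Lemma~\ref{lem:non-ab-fourier-props} that this acts as an $n^2$-Hadamard operator on the subspace $V=\{T_f\}$, applies the support-size bound to get $\ab{\supp(T_f)}\ab{\supp(\wh{T_f})}\ge n^2$, and then trades the extra factor of $n$ against the fact that $\ab{\supp(T_f)}=n\ab{\supp(f)}$. You instead vectorize only the Fourier side, building an explicit rectangular matrix $M\in\C^{n^2\times n}$ with $(f(x))_x\mapsto\tfrac1n\wh{T_f}$, verify via Schur orthogonality (equivalently $FF^*=nI$) that $M^*M=nI$ with entries of modulus at most~$1$, and apply Theorem~\ref{thm:hadamard-ds} directly. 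Both proofs rest on exactly the same orthogonality of matrix entries; yours buys a cleaner application of Theorem~\ref{thm:hadamard-ds} as stated (for a genuinely rectangular $k$-Hadamard matrix, as the definition allows) without needing to first show $\supp(T_f)=n\,\supp(f)$ and divide through, while the paper's version exhibits the non-abelian Fourier transform as a two-sided conjugation on the matrix algebra, which is conceptually parallel to how it is usually presented. One small remark: your closing sentence suggests that working with $T_f$ would ``forfeit the factor of $n$,'' but the paper shows this is not so, since the operator on matrix space is $n^2$-Hadamard rather than $n$-Hadamard, so the extra factor of $n$ in $\supp(T_f)$ is exactly cancelled; your approach simply avoids having to make that accounting explicit.
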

From Lemma \ref{lem:rksupp-and-minsupp}, we see that Meshulam's Theorem \ref{thm:meshulam} implies our Theorem \ref{thm:non-abelian-up}. Moreover, for Hermitian functions $f$, the two theorems are precisely equivalent. Finally, we can prove a reverse implication, up to a factor of $4$.
\begin{lem}
	Theorem \ref{thm:non-abelian-up} implies that for any function $f:G \to \C$,
	\[
		\ab{\supp(f)}\rksupp(\hat f) \geq \frac{\ab G}4.
	\]
\end{lem}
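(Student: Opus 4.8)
The plan is to reduce to the Hermitian case, where by Lemma \ref{lem:rksupp-and-minsupp} the rank-support and the minimum support-size coincide, so that Theorem \ref{thm:non-abelian-up} can be applied directly. Given a nonzero $f:G \to \C$, I would work inside the embedding $\C[G] \hookrightarrow \C^{n\times n}$, $g \mapsto T_g$, and use the involution $f^*(x) = \ol{f(x\inv)}$, which satisfies $T_{f^*} = (T_f)^*$ and so corresponds to taking the adjoint. In particular $f$ is Hermitian in the sense of the excerpt exactly when $f = f^*$, i.e.\ when $T_f$ is a Hermitian matrix.

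The candidate surrogate is $h = f + f^*$, which is manifestly Hermitian. Two estimates make it useful. First, $\supp(f^*) = \supp(f)\inv$ has the same size as $\supp(f)$, so $\ab{\supp(h)} \le \ab{\supp(f)} + \ab{\supp(f^*)} = 2\ab{\supp(f)}$. Second, $T_h = T_f + (T_f)^*$, so subadditivity of rank together with $\rank (T_f)^* = \rank T_f$ gives $\rksupp(\hat h) = \rank T_h \le 2\rank T_f = 2\rksupp(\hat f)$. Since $h$ is Hermitian, Lemma \ref{lem:rksupp-and-minsupp} yields $\ab{\minsupp(\hat h)} = \rksupp(\hat h)$, and (assuming $h \neq 0$) Theorem \ref{thm:non-abelian-up} applied to $h$ gives $\ab G \le \ab{\supp(h)}\ab{\minsupp(\hat h)} = \ab{\supp(h)}\rksupp(\hat h) \le 4\ab{\supp(f)}\rksupp(\hat f)$, which is exactly the claim.

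The only gap is the possibility that $h = f + f^* = 0$, i.e.\ $f^* = -f$. I would handle this degenerate case by a phase rotation: $g = if$ then satisfies $g^* = -i f^* = if = g$, so $g$ is Hermitian and nonzero, with $\supp(g) = \supp(f)$ and $T_g = i T_f$, hence $\rksupp(\hat g) = \rksupp(\hat f)$; applying Theorem \ref{thm:non-abelian-up} and Lemma \ref{lem:rksupp-and-minsupp} to $g$ gives the even stronger bound $\ab{\supp(f)}\rksupp(\hat f) \ge \ab G$. Alternatively, one can observe that $h_\theta = e^{i\theta} f + e^{-i\theta} f^*$ is Hermitian for every real $\theta$ and vanishes identically for at most one value of $e^{2i\theta}$, so some admissible $\theta$ works and no case split is needed.

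The ``hard part'' here is only this bookkeeping: choosing a Hermitian surrogate of $f$ whose support and whose $T$-rank each inflate by a factor of at most $2$, and disposing of the degenerate vanishing case. Everything else is immediate from Lemma \ref{lem:rksupp-and-minsupp} and Theorem \ref{thm:non-abelian-up}, and the factor $4 = 2 \cdot 2$ in the conclusion is transparently the product of the two blow-up factors.
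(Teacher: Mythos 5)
Your argument follows the same route as the paper's: form the Hermitian surrogate $g = f + f^*$ (the paper writes $g(x) = f(x) + \ol{f(x\inv)}$, which is the same thing), use subadditivity of support and of rank to bound $\ab{\supp(g)} \le 2\ab{\supp(f)}$ and $\rksupp(\hat g) \le 2\rksupp(\hat f)$, then apply Lemma~\ref{lem:rksupp-and-minsupp} and Theorem~\ref{thm:non-abelian-up} to $g$ and divide by $4$. So the core is identical.

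What you add, and the paper's proof silently omits, is the degenerate case $f + f^* = 0$: the paper applies Theorem~\ref{thm:non-abelian-up} to $g$ without checking that $g \neq 0$, which the theorem (like Donoho--Stark) implicitly requires. Your fix is correct and clean: when $f^* = -f$, the function $if$ is Hermitian, nonzero, has the same support as $f$ and the same $\rksupp$, so one even gets the stronger bound $\ab{\supp(f)}\rksupp(\hat f) \ge \ab G$ in that case. The $h_\theta$ rotation argument also works and avoids the case split. This is a genuine (if small) gap in the paper's write-up that you correctly noticed and patched.
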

\begin{proof}
	Consider the function $g:G \to \C$ defined by $g(x) = f(x)+\ol{f(x\inv)}$. Then $g$ is Hermitian, so by Lemma \ref{lem:rksupp-and-minsupp} we have that $\rksupp(\hat g)= \ab{\minsupp(\hat g)}$. 
	As both the rank of matrices and the support of vectors are subadditive, we have that
	\[
		\ab{\supp(g)} \leq 2 \ab{\supp(f)} \qquad \text{ and }\qquad \rksupp(\hat g) \leq 2 \rksupp(\hat f).
	\]
	Putting this all together, we find that
	\[
		\ab{\supp(f)} \rksupp(\hat f) \geq \frac 14 \ab{\supp(g)} \rksupp(\hat g) = \frac 14 \ab{\supp(g)} \ab{\minsupp(\hat g)} \geq \frac{\ab G}{4}. \qedhere
	\]
\end{proof}
\begin{rem}
	The bound in Meshulam's Theorem \ref{thm:meshulam} is tight if $f$ is the indicator function of some subgroup of $G$, as observed in \cite{Meshulam}. As such an indicator function is Hermitian, this also shows that the bound in Theorem \ref{thm:non-abelian-up} is in general best possible. 
\end{rem}

Our proof of Theorem \ref{thm:non-abelian-up} more or less follows the abelian case, namely it's a direct application of our general framework. We define linear operators $A,B:\C^{n \times n} \to \C^{n \times n}$ by\footnote{We use the notation $\circ$ to denote the action of $A$ and $B$ to avoid confusion with the notation for matrix multiplication.} 
$A\circ M = F MF^*$ and $B\circ M = F^* M F$, for any $M \in \C^{n\times n}$. Note that $A\circ T_f = \wh{T_f}$. The main properties of these operators are captured in the following lemma, which simply says that $A$ acts as an $n^2$-Hadamard operator on the subspace $\C[G] \subset \C^{n \times n}$.
\begin{lem}\label{lem:non-ab-fourier-props}
	Let $\C[G] \cong V \subset \C^{n\times n}$ be the subspace consisting of the matrices $T_f$, and let $U \subset \C^{n \times n}$ be the Fourier subspace consisting of all block-diagonal matrices with $d_i$ identical blocks of size $d_i \times d_i$. Then the following hold.
	\begin{enumerate}
		\item For any $M \in V$, we have that $B\circ (A \circ M) = n^2 M$.
		\item For any $M \in V$, we have that $\norm{A\circ M}_\infty \leq \norm M_1$
		\item For any $N \in U$, we have that $\norm{B\circ N}_\infty \leq \norm N_1$.
	\end{enumerate}
\end{lem}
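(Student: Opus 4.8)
The plan is to verify the three items directly from the definitions, using the orthogonality relations for matrix entries (Proposition \ref{prop:matrix-entries-orth}) and the normalization $F^*F = FF^* = nI$ established above. For item (i), I would simply compute: by definition $B \circ (A \circ M) = F^*(FMF^*)F = (F^*F)M(F^*F) = (nI)M(nI) = n^2 M$, and this in fact holds for all $M \in \C^{n\times n}$, not just $M \in V$; the restriction to $V$ is only relevant for the other two items. So item (i) is essentially immediate and is the part I expect to be routine.

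The content is in items (ii) and (iii), which are the analogues of the two $L^1\to L^\infty$ bounds ($\norm{A}_{1\to\infty}\le 1$ and $\norm{B}_{1\to\infty}\le 1$) in the primary uncertainty principle. Here the ambient vector space is $\C^{n\times n}$ with entries indexed by pairs, and the $L^1$ and $L^\infty$ norms should be the entrywise $\ell^1$ and $\ell^\infty$ norms, i.e.\ $\norm{M}_1 = \sum_{a,b}\ab{M_{ab}}$ and $\norm{M}_\infty = \max_{a,b}\ab{M_{ab}}$. For item (ii), I would fix $M = T_f \in V$ and write out the $(x,(i;j,k))$-type entry of $A\circ M = FMF^*$ as a bilinear combination $\sum_{y,z} F_{?,?}\, M_{yz}\, \ol{F_{?,?}}$; the key point is that every entry of $F$ has absolute value at most $\sqrt{d_i}\cdot 1 \le \sqrt n$ — wait, that overshoots, so the real work is to see that the relevant entries of $F$ and $F^*$ pair up to give a bound of exactly $1$ when $M \in V$. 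More precisely, since $M = T_f$ is left-multiplication by $f$, its matrix in the delta-basis has entries $(T_f)_{x,y} = f(xy\inv)$, so $\norm{T_f}_1 = \sum_{x,y}\ab{f(xy\inv)} = n\norm f_1$ and $\norm{\wh{T_f}}_\infty = \max_i \ab{n\hat f(\rho_i)}_\infty \le n \max_i \norm{\rho_i(\cdot)}\cdot\norm f_1$-type estimate; using that each $\rho_i$ is unitary, every entry of $\rho_i(x)$ has absolute value at most $1$, so every entry of $n\hat f(\rho_i) = \sum_x n f(x)\rho_i(x)$... no, I must be careful with the factor of $n$. The honest computation: the $(j,k)$ entry of the block $n\hat f(\rho_i)$ inside $\wh{T_f}$ is $n\sum_x f(x)\rho_i(x)_{jk}$, whose absolute value is at most $n\sum_x \ab{f(x)} = n\norm f_1 = \norm{T_f}_1$. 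That is exactly item (ii). For item (iii), the symmetric statement: given $N \in U$ block-diagonal with blocks $N_i$ repeated $d_i$ times, $B\circ N = F^* N F$ lands in $V$, so it equals $T_g$ for some $g$, and I would compute $g$ explicitly via the Fourier inversion formula — $g(x) = \frac1{n}\sum_i d_i \tr(N_i \rho_i(x)\inv)$ or similar — and bound $\ab{g(x)} = \ab{(B\circ N)_{x,e}\text{-type entry}} \le \norm N_1$ using again unitarity of the $\rho_i$ together with the $\sqrt{d_i}$ scaling in the columns of $F$; the factors of $\sqrt{d_i}$ from the two copies of $F$ combine with the $1/n$ from inversion and the multiplicity-$d_i$ of each block to produce the clean bound $\norm{B\circ N}_\infty \le \norm N_1$.

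The main obstacle I anticipate is bookkeeping the normalization constants: tracking the $\sqrt{d_i}$ scaling in the columns of $F$, the $d_i$-fold multiplicity of each block in $U$, the factor $n$ in $F^*F = nI$, and the factor $n$ in the blocks $n\hat f(\rho_i)$ of $\wh{T_f}$, and confirming they conspire so that items (ii) and (iii) come out with constant exactly $1$ (which is what is needed to feed into Theorem \ref{thm:primary-up} with $k = n^2$). I would organize this by first writing down clean formulas for the entries of $T_f$, of $F$, and of $\wh{T_f}$, then doing each bound as a one-line estimate using $\ab{\rho_i(x)_{jk}} \le 1$. A secondary, more conceptual point to get right is that $B\circ N \in V$ for $N \in U$ — i.e.\ that conjugating a block-diagonal matrix (with the prescribed repeated-block structure) by $F^*$ lands back in the group algebra; this follows because $A$ restricted to $V$ is a bijection onto $U$ (immediate from item (i), which gives $B\circ A = n^2\,\mathrm{id}$ on $V$, hence $A|_V$ is injective, and a dimension count $\dim V = \dim U = \sum_i d_i^2 = n$ shows it is onto), so $B|_U = n^2 (A|_V)\inv$ maps $U$ into $V$. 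Once this structural fact is in hand, items (ii) and (iii) are genuinely symmetric and the second is proved just like the first.
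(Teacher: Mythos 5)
Your proposal is correct, and item (i) is identical to the paper. For items (ii) and (iii), however, you take a genuinely different route. The paper proves (ii) by first invoking convexity to reduce to the extreme points of the $L^1$ ball in $V$ (i.e.\ to $f=\delta_x$, for which $T_f$ is a permutation matrix), and then observing that the entries of $FT_{\delta_x}F^*$ are governed by the orthogonality relations of Proposition \ref{prop:matrix-entries-orth}, hence bounded by $n=\norm{T_{\delta_x}}_1$. You instead estimate the entries of $\wh{T_f}$ directly for arbitrary $f$: the $(j,k)$ entry of the block $n\hat f(\rho_i)$ is $n\sum_x f(x)\rho_i(x)_{jk}$, bounded in modulus by $n\norm f_1 = \norm{T_f}_1$ since $\ab{\rho_i(x)_{jk}}\le 1$ by unitarity. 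This is a clean one-liner that sidesteps the extreme-point reduction, and as a bonus makes the identity $\norm{T_f}_1 = n\norm f_1$ explicit. For (iii), the paper deduces the bound from (ii) by duality: $B=A^*$, and the entrywise $L^1$ and $L^\infty$ norms are dual, so $\norm{B}_{1\to\infty}=\norm{A}_{1\to\infty}$. This is slicker, but it tacitly uses that the duality survives restriction to the subspaces $V$ and $U$ (which does hold, but requires a moment's thought about the dual norms of $L^1|_V$ and $L^\infty|_U$). Your explicit computation via Fourier inversion avoids that subtlety at the cost of invoking the inversion formula (a standard consequence of Proposition \ref{prop:matrix-entries-orth}, though not stated in the paper); once you fix the normalizations --- with $B\circ N = n^2 T_h$ where $\hat h(\rho_i)=N_i/n$, so that $\norm{B\circ N}_\infty = n^2\norm h_\infty$ and the inversion formula gives $n^2\ab{h(x)} = \ab{\sum_i d_i\tr(N_i\rho_i(x)^*)}\le\sum_i d_i\norm{N_i}_1 = \norm N_1$ --- the bookkeeping does indeed close up to the constant $1$, as you anticipated. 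Your observation that $B|_U$ maps into $V$ (via bijectivity of $A|_V$ and the dimension count $\dim V=\dim U=\sum_i d_i^2 = n$) is the right structural fact to justify writing $B\circ N=T_g$.
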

The proof is straightforward, and we defer it to Appendix \ref{sec:appendix}. However, with this lemma in hand, we can prove Theorem \ref{thm:non-abelian-up}.
\begin{proof}[Proof of Theorem \ref{thm:non-abelian-up}]
	We begin by fixing bases $E_1,\ldots,E_t$ that are minimizers in the definition of $\ab{\minsupp(\hat f)}$. By applying the support-size uncertainty principle for $k$-Hadamard matrices, Theorem \ref{thm:primary-up}, to the operators $A,B$ as above, we see that
	\[
		\ab{\supp(T_f)} \ab{\supp(A\circ T_f)} \geq n^2.
	\]
	Recall that $A\circ T_f$ is simply $\wh{T_f}$, so the second term is simply 
	$\ab{\minsupp(\hat f)}$. For the first term, observe that each column of $T_f$ is simply a permutation of the values that $f$ takes. This implies that $\ab{\supp(T_f)} = n \ab{\supp(f)}$, as every non-zero value of $f$ is repeated exactly $n$ times in $T_f$. Thus, dividing by $n$ gives the claimed bound.
\end{proof}

\subsubsection{Kuperberg's proof of Meshulam's uncertainty principle}
After reading a draft of this paper, Greg Kuperberg (personal communication) discovered a new norm uncertainty principle for the Fourier transform over non-abelian groups, which can be proved using our framework and which implies Meshulam's Theorem \ref{thm:meshulam} as a simple corollary. To state Kuperberg's theorem, we first need to define the Schatten norms of a matrix.
\begin{Def}[Schatten norms]
	Let $M \in \C^{n \times n}$ be a matrix and $p \in [1,\infty]$ be a parameter. The \emph{Schatten $p$-norm} of $M$, denoted $\norm M_p\up S$, is defined by
	\[
		\norm M_p\up S = \tr \left( (M^* M)^{p/2} \right) ^{1/p}.
	\]
	Equivalently, if $\sigma = (\sigma_1,\ldots,\sigma_n)$ is the vector of singular values of $M$, then the Schatten $p$-norm of $M$ is simply the ordinary $L^p$ norm of $\sigma$, i.e.\ $\norm M_p \up S = \norm \sigma_p$. 
\end{Def}
The Schatten norms are invariant under left- or right-multiplication by unitary matrices, so $\norm{T_f}\up S_p = \frac 1n \norm{\wh{T_f}}\up S_p$, with the factor of $n$ coming from our normalization of $F$ so that $\frac{1}{\sqrt n}F$ is unitary. Moreover, since $\wh{T_f}$ is a block-diagonal matrix with $d_i$ blocks of $n\hat f{\rho_i}$, we have that
\begin{equation}\label{eq:schatten-formulas}
	\norm{T_f}_1 \up S = \frac 1n\norm{\wh{T_f}}_1 \up S = \sum_{i=1}^t d_i \norm{\hat f(\rho_i)}_1 \up S \quad\text{and}\quad \norm{T_f}_\infty \up S = \frac 1n \norm{\wh{T_f}}_\infty \up S = \max_{i \in [t]} \norm{\hat f(\rho_i)}_\infty \up S.
\end{equation}
With this definition, we can state Kuperberg's norm uncertainty principle for the Fourier transform over finite groups. We state it for the Schatten norms of $\wh{T_f}$, but by (\ref{eq:schatten-formulas}), we could just as well replace $\wh{T_f}$ by $T_f$ in the following theorem.
\begin{thm}[Kuperberg]\label{thm:kuperberg}
	Let $G$ be a group of order $n$ and $f:G \to \C$ a non-zero function. We have that
	\[
		\frac{\norm f_1}{\norm f_\infty} \cdot \frac{\norm{\wh{T_f}}_1 \up S}{\norm{\wh{T_f}}_\infty \up S} \geq n.
	\]
\end{thm}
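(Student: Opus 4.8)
The plan is to follow the two-bounds-multiplied argument behind Theorem~\ref{thm:primary-up}, using the ordinary $L^1$ and $L^\infty$ norms on functions $f\colon G\to\C$ on one side and the Schatten $1$- and $\infty$-norms on the Fourier side. Concretely, the two one-sided bounds I would establish are the Hausdorff--Young-type inequalities
\[
	\max_{i\in[t]}\norm{\hat f(\rho_i)}_\infty\up S\le\norm f_1
	\qquad\text{and}\qquad
	\norm f_\infty\le\frac1n\sum_{i=1}^t d_i\,\norm{\hat f(\rho_i)}_1\up S ,
\]
which say respectively that the non-abelian Fourier transform is bounded of norm $\le1$ from $L^1$ into $\bigoplus_i\C^{d_i\times d_i}$ under the norm $\max_i\norm\cdot_\infty\up S$, and that its inverse is bounded of norm $\le 1/n$ from $\bigoplus_i\C^{d_i\times d_i}$ under the norm $\sum_i d_i\norm\cdot_1\up S$ into $L^\infty$. (Thus Theorem~\ref{thm:kuperberg} is an instance of the general primary principle, Theorem~\ref{thm:general-case}, with $A$ the Fourier transform and $B$ an appropriate scalar multiple of its inverse, the multiple being chosen so that $BA$ is a scalar matrix, which makes the hypothesis $\norm{BAv}\ge k\norm v$ automatic.) Granting the two bounds, I would substitute the identities $\norm{\wh{T_f}}_\infty\up S=n\max_i\norm{\hat f(\rho_i)}_\infty\up S$ and $\norm{\wh{T_f}}_1\up S=n\sum_i d_i\norm{\hat f(\rho_i)}_1\up S$ of \eqref{eq:schatten-formulas}, multiply the two inequalities, and clear denominators; the product is exactly $\norm f_1\,\norm{\wh{T_f}}_1\up S\ge n\,\norm f_\infty\,\norm{\wh{T_f}}_\infty\up S$, and dividing by $\norm f_\infty\,\norm{\wh{T_f}}_\infty\up S$ (nonzero since $f\neq0$ and the Fourier transform is injective) yields the claim.

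It remains to prove the two bounds, and neither is hard. The first follows from the triangle inequality and the unitarity of each $\rho_i$:
\[
	\norm{\hat f(\rho_i)}_\infty\up S=\bnorm{\sum_{x\in G}f(x)\rho_i(x)}_\infty\up S\le\sum_{x\in G}\ab{f(x)}\,\norm{\rho_i(x)}_\infty\up S=\norm f_1 ,
\]
since $\norm{\rho_i(x)}_\infty\up S$, the largest singular value of a unitary matrix, equals $1$. For the second, I would use the Fourier inversion formula for finite groups,
\[
	f(x)=\frac1n\sum_{i=1}^t d_i\,\tr\!\big(\hat f(\rho_i)\,\rho_i(x\inv)\big) ,
\]
a consequence of the identity $\sum_i d_i\tr\rho_i(g)=n\cdot[g=e]$ (the character of the regular representation), together with Hölder's inequality for Schatten norms applied summand by summand, $\ab{\tr\!\big(\hat f(\rho_i)\rho_i(x\inv)\big)}\le\norm{\hat f(\rho_i)}_1\up S\,\norm{\rho_i(x\inv)}_\infty\up S=\norm{\hat f(\rho_i)}_1\up S$, again by unitarity. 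Summing over $i$ and taking the supremum over $x\in G$ gives the bound. (Alternatively, one can stay inside $\C^{n\times n}$ and obtain $\norm{T_f}_1\up S\ge n\norm f_\infty$ directly, by pairing $T_f$ in the trace inner product against the permutation matrix $T_{\delta_g}$, where $g\in G$ achieves $\ab{f(g)}=\norm f_\infty$, and using that $T_{\delta_g}$ is unitary; by \eqref{eq:schatten-formulas} this is the same inequality.)

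I do not anticipate a genuine obstacle: the ``multiply and rearrange'' step is just bookkeeping with the normalization factor $n$, and each of the two displayed bounds has a one-line proof. The only point needing any care is choosing the norm pair on the Fourier side so that the product telescopes to the ratio $\norm{\wh{T_f}}_1\up S/\norm{\wh{T_f}}_\infty\up S$ with precisely the constant $n$ --- in particular, that the multiplicities $d_i$, which record how many copies of each block $\hat f(\rho_i)$ occur inside $\wh{T_f}$, enter the Schatten-$1$ norm of $\wh{T_f}$ with the same weight as they do in Fourier inversion. Finally, Meshulam's Theorem~\ref{thm:meshulam} drops out immediately: any matrix $M$ satisfies $\norm M_1\up S/\norm M_\infty\up S\le\rank M$ (the sum of the singular values is at most $\rank M$ times the largest one), so taking $M=\wh{T_f}$ and using $\rksupp(\hat f)=\rank\wh{T_f}$ together with $\norm f_1\le\ab{\supp(f)}\norm f_\infty$ turns Theorem~\ref{thm:kuperberg} into $\ab{\supp(f)}\rksupp(\hat f)\ge n$.
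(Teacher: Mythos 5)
Your proof is correct, and it follows the same strategy as the paper: recognize Theorem~\ref{thm:kuperberg} as an instance of the primary uncertainty principle with the Schatten $1$/$\infty$ pair on the Fourier side, prove the two one-sided operator-norm bounds, and multiply. The only difference is in how the two bounds are established: the paper gets the $L^1\to$ Schatten-$\infty$ bound by convexity of the Schatten-$\infty$ norm plus checking the extreme points of the $L^1$ ball (delta functions, whose $T_{\delta_x}$ are permutation matrices), and then obtains the other bound for free by dualizing, using that the $L^1$/$L^\infty$ and Schatten-$1$/Schatten-$\infty$ pairs are dual. You instead prove both bounds directly and explicitly --- the first by the triangle inequality applied to $\hat f(\rho_i)=\sum_x f(x)\rho_i(x)$ and unitarity of $\rho_i$, the second by the Fourier inversion formula combined with von Neumann's trace inequality $\ab{\tr(AB)}\le\norm{A}_1\up S\norm{B}_\infty\up S$ --- and your alternative via the trace pairing $\tr(T_fT_{\delta_g})$ is a pleasant reformulation of the same computation. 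The two routes are of comparable length; yours is slightly more self-contained (it does not invoke norm duality as a black box), while the paper's is slightly slicker. The normalization bookkeeping and the deduction of Meshulam's theorem at the end both check out and match the paper.
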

Meshulam's Theorem \ref{thm:meshulam} is a simple corollary of this theorem.
\begin{proof}[Proof of Theorem \ref{thm:meshulam}]
	We already know that $\ab{\supp(f)} \geq \norm f_1/\norm f_\infty$. Additionally, it is well-known that for any matrix $M$,
	\[
		\rank M \geq \frac{\norm M_1 \up S}{\norm M_\infty \up S}.
	\]
	This can be seen from the definition of $\norm M_p \up S$ as the $L^p$ norm of the vector $\sigma$ of singular values of $M$. Indeed, the rank of $M$ is simply the number of non-zero singular values, i.e.\ $\rank M = \ab{\supp(\sigma)}$, from which the above inequality follows. This shows, using Theorem \ref{thm:kuperberg}, that
	\[
		\ab{\supp(f)} \rksupp(\hat f) = \ab{\supp(f)} \rank{\wh{T_f}} \geq \frac{\norm f_1}{\norm f_\infty} \cdot \frac{\norm{\wh{T_f}}_1 \up S}{\norm{\wh{T_f}}_\infty \up S} \geq n.\qedhere
	\]
\end{proof}
So to finish the proof, we need to prove Kuperberg's Theorem \ref{thm:kuperberg}, whose proof is another application of our general framework.
\begin{proof}[Proof of Theorem \ref{thm:kuperberg}]
	If we think of $f \in \C[G]$ as a row vector, then we can think of $F$ as a linear operator $\C[G] \to U$, which sends $f$ to $\frac 1n\wh{T_f}$; note the additional factor of $\frac 1n$, coming from our earlier normalization of $\wh{T_f} = FT_fF^*$. We first claim that $\norm{\frac 1n\wh{T_f}}_\infty \up S \leq \norm f_1$, i.e.\ that $F$ has norm $1$ as an operator from $\norm \cdot_1$ to $\norm \cdot_\infty \up S$.
	By convexity of the Schatten-$\infty$ norm, it suffices to check this on the extreme points of the $L^1$ unit ball, i.e.\ for a delta function $f=\delta_x$, the function that takes value $1$ at $x \in G$ and $0$ on all other elements of $G$. But $T_{\delta_x}$ is simply a permutation matrix, so all of its singular values are $1$, implying that $\norm{T_{\delta_x}}_1 \up S =1$. This in turn implies that $\norm{\wh{T_{\delta_x}}}_1 \up S= n$, by (\ref{eq:schatten-formulas}).

	Recall that the Schatten-$1$ and Schatten-$\infty$ norms are dual on the matrix space $\C^{d_i \times d_i}$, which implies that they are dual on $U$ by the formulas in (\ref{eq:schatten-formulas}). Since the $L^1$ and $L^\infty$ norms on $\C[G]$ are also dual, the above also implies that $F^*$ has norm $1$ as an operator from $\norm \cdot_1 \up S$ to $\norm \cdot_\infty$. Finally, we already know that $F^*F = nI$, so we conclude by the primary uncertainty principle, Theorem \ref{thm:primary-up}, that
	\[
		\norm f_1 \norm{\wh{T_f}}_1 \up S \geq n \norm f_\infty \norm{\wh{T_f}}_\infty \up S.\qedhere
	\]
\end{proof}

\subsection{Uncertainty principles for notions of approximate support}\label{sec:approx-support}
The support-size uncertainty principle of Theorem \ref{thm:hadamard-ds} is rather weak, in the sense that the support size of a vector is a very fragile measure: coordinates with arbitrarily small non-zero values contribute to it. Stronger versions of this theorem, in which one considers instead the ``essential support'', namely the support of a vector after deleting such tiny entries\footnote{More precisely, deleting a small fraction of the total mass in some norm.}, are much more robust. Such versions were sought first by Williams \cite{Williams} in the continuous setting, and by Donoho and Stark \cite{DoSt} in the discrete setting.  It turns out that using our approach it is easy to extend Theorem \ref{thm:hadamard-ds} to such a robust form for the $L^1$ norm, but not for $L^2$ (although Donoho and Stark's original $L^2$ proof does generalize to $k$-Hadamard matrices with $A^*A=kI$). We will describe both and compare them. 

We start with some notation. If $v \in \C^n$ is a vector and $T \subseteq [n]$ is a set of coordinates, we denote by $v[T]$ the vector in $\C^{\ab T}$ obtained by restricting $v$ to the coordinates in $T$. We use $T^{\mathsf{c}}$ to denote the complement of $T$ in $[n]$. 
\begin{Def}
	Let $\varepsilon \in [0,1]$ and $p \in [1,\infty]$. For a vector $v \in \C^n$ and a set $T \subseteq [n]$, we say that $v$ is \emph{$(p,\varepsilon)$-supported} on $T$ if $\norm{v[T^{\mathsf{c}}]}_p \leq \varepsilon \norm v_p$.

	We also define the \emph{$(p,\varepsilon)$-support size} of $v$ to be
	\[
		\ab{\supp_\varepsilon^p(v)} = \min \{\ab T: T \subseteq [n], v\text{ is }(p,\varepsilon)\text{-supported on }T\}.
	\]
\end{Def}
\begin{rem}
	In general, there may not exist a unique minimum-sized set $T$ in the definition of $\ab{\supp_\varepsilon^p(v)}$, so the set ``$\supp_\varepsilon^p(v)$'' is not well-defined. However, we will often abuse notation and nevertheless write $\supp_\varepsilon^p (v)$ to denote an arbitrary set $T$ achieving the minimum in the definition of $\ab{\supp_\varepsilon^p(v)}$. 
\end{rem}

The basic uncertainty principle concerning approximate supports was also given by Donoho and Stark, who proved the following.
\begin{thm}[Donoho--Stark \cite{DoSt}]\label{thm:ds-approx-support}
	Let $G$ be a finite abelian group and $f:G \to \C$ a non-zero function. For any $\varepsilon,\eta \in [0,1]$, we have that
	\[
		\ab{\supp_\varepsilon^2 (f)} \ab{\supp_\eta^2(\hat f)} \geq  \ab G (1- \varepsilon- \eta)^2.
	\]
\end{thm}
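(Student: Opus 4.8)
The plan is to follow the same two-part template used for Theorem~\ref{thm:hadamard-ds}, but now working with the $L^2$ norm and an approximate-support truncation argument, and applying the stronger hypothesis that the Fourier transform matrix $F$ (normalized to have unimodular entries) satisfies $F^*F = \ab G I$, i.e.\ it is a unitary up to scaling. The key point is that for such matrices one can run the Donoho--Stark argument directly on $L^2$ norms rather than going through the $H_0 = \norm{\cdot}_1/\norm{\cdot}_\infty$ primary principle, which only handles the $L^1$ case cleanly.

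First I would set up notation: let $T = \supp_\varepsilon^2(f)$ and $S = \supp_\eta^2(\hat f)$ be sets achieving the respective minima, so $\ab T = \ab{\supp_\varepsilon^2(f)}$, $\ab S = \ab{\supp_\eta^2(\hat f)}$, and $\norm{f[T^{\mathsf c}]}_2 \le \varepsilon \norm f_2$, $\norm{\hat f[S^{\mathsf c}]}_2 \le \eta \norm{\hat f}_2$. By Parseval (which holds because $\tfrac{1}{\sqrt{\ab G}}F$ is unitary), $\norm{\hat f}_2 = \sqrt{\ab G}\,\norm f_2$, so I can normalize $\norm f_2 = 1$ and hence $\norm{\hat f}_2 = \sqrt{\ab G}$. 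The strategy is to bound $\norm{\hat f}_2$ from above by estimating how much of its mass is concentrated on $S$, using that $f$ is concentrated on $T$.

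The main step is the norm estimate. Write $\hat f = \widehat{f[T]} + \widehat{f[T^{\mathsf c}]}$. The second term has $L^2$ norm $\sqrt{\ab G}\,\norm{f[T^{\mathsf c}]}_2 \le \varepsilon\sqrt{\ab G}$. For the first term, restrict to $S$: I claim $\norm{\widehat{f[T]}\,[S]}_2 \le \sqrt{\ab T \ab S}\,\norm{f[T]}_2 \le \sqrt{\ab T \ab S}$. This follows by viewing $v \mapsto (\widehat{v[T]})[S]$ as the submatrix $F_{S,T}$ of $F$ (rows $S$, columns $T$) and bounding its operator $L^2\to L^2$ norm by its Frobenius norm $\le \sqrt{\ab S \ab T}$ since every entry has modulus $1$. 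Now the triangle inequality and the definition of $S$ give
\[
	\sqrt{\ab G} = \norm{\hat f}_2 \le \norm{\hat f[S]}_2 + \eta \norm{\hat f}_2 = \norm{\hat f[S]}_2 + \eta\sqrt{\ab G},
\]
so $\norm{\hat f[S]}_2 \ge (1-\eta)\sqrt{\ab G}$. On the other hand $\norm{\hat f[S]}_2 \le \norm{\widehat{f[T]}\,[S]}_2 + \norm{\widehat{f[T^{\mathsf c}]}\,[S]}_2 \le \sqrt{\ab T \ab S} + \varepsilon\sqrt{\ab G}$. Combining, $(1-\eta-\varepsilon)\sqrt{\ab G} \le \sqrt{\ab T \ab S}$, and squaring yields $\ab T \ab S \ge \ab G(1-\varepsilon-\eta)^2$, as desired (the bound is vacuous if $\varepsilon + \eta \ge 1$).

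The step I expect to be the main obstacle — or at least the one requiring the most care — is the submatrix norm bound $\norm{F_{S,T}}_{2\to 2} \le \sqrt{\ab S\ab T}$ and making sure the constants track through the two applications of the triangle inequality without losing the clean $(1-\varepsilon-\eta)^2$ factor; one must be careful that the truncation is done on the correct side (concentration of $f$ on $T$ feeds into a Frobenius bound, while concentration of $\hat f$ on $S$ is used only via the defining inequality). I would also remark, in line with the discussion preceding the theorem, that this argument genuinely uses $F^*F = \ab G I$ (Parseval) and not merely that $F$ is $k$-Hadamard, which is why the $L^2$ version does not fall out of the primary principle in the same automatic way the $L^1$ version does; for general $k$-Hadamard $A$ with $A^*A = kI$ the identical proof gives $\ab{\supp_\varepsilon^2(v)}\ab{\supp_\eta^2(Av)} \ge k(1-\varepsilon-\eta)^2$.
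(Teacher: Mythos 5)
Your proof is correct and is essentially the argument the paper gives for the general Theorem~\ref{thm:our-2-support} in Appendix~\ref{sec:appendix}: the same Frobenius bound on a rectangular submatrix of the unitary, the same use of $A^*A = kI$ to invoke Parseval, and the same triangle-inequality bookkeeping, yielding the identical $k$-Hadamard generalization you state at the end. The only cosmetic difference is that you bound $\norm{\hat f[S]}_2$ directly from both sides, whereas the paper sandwiches the $L^2 \to L^2$ operator norm of $M = P_S U^* P_T$ via the quantity $\norm{MUv}_2$; these amount to the same chain of inequalities, and your observation that unitarity is genuinely needed here (unlike in the $L^1$ case) matches the paper's own discussion.
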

If one attempts to apply our basic framework to prove an uncertainty principle for approximate supports, one is naturally led to the following result. The analogous theorem for the Fourier transform on $\R$ was proven by Williams \cite{Williams}, in what we believe is the earliest application of this paper's approach to uncertainty principles.
\begin{thm}[$L^1$-approximate support uncertainty principle]\label{thm:approx-support}
	Let $A \in \C^{m \times n}$ be a $k$-Hadamard matrix and let $v \in \C^n$ be a non-zero vector. For any $\varepsilon,\eta \in [0,1]$, we have that
	\[
		\ab{\supp_\varepsilon^1(v)} \ab{\supp_\eta^1(Av)} \geq k (1- \varepsilon)(1- \eta)\geq k(1- \varepsilon -\eta).
	\]
\end{thm}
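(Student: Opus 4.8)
The plan is to instantiate exactly the two-part framework from the Introduction: derive a universal inequality bounding the $(1,\varepsilon)$-support size of an arbitrary vector by its ratio of norms $H_0 = \norm\cdot_1/\norm\cdot_\infty$, and then feed this into the primary uncertainty principle in the form of Theorem~\ref{thm:hadamard-uncertainty}. The only genuinely new ingredient is the universal inequality, and it is a one-line computation; the reason the argument works for $L^1$ (but not for $L^2$) is precisely that the relevant localization measure compares cleanly against $\norm\cdot_1/\norm\cdot_\infty$ when mass is measured in the $L^1$ norm.

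First I would prove the following universal bound: for every non-zero $u \in \C^n$ and every $\varepsilon \in [0,1]$,
\[
	\ab{\supp_\varepsilon^1(u)} \geq (1-\varepsilon)\,\frac{\norm u_1}{\norm u_\infty}.
\]
To see this, let $T$ be a set achieving the minimum in the definition of $\ab{\supp_\varepsilon^1(u)}$, so that $\norm{u[T^{\mathsf c}]}_1 \leq \varepsilon \norm u_1$, and hence $\norm{u[T]}_1 \geq (1-\varepsilon)\norm u_1$. On the other hand, bounding each coordinate of $u[T]$ by $\norm u_\infty$ gives $\norm{u[T]}_1 = \sum_{i \in T} \ab{u_i} \leq \ab T \,\norm u_\infty$. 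Combining the two yields $\ab T \geq (1-\varepsilon)\norm u_1/\norm u_\infty$, which is the claim since $\ab T = \ab{\supp_\varepsilon^1(u)}$.

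Then I would apply this bound to both $v$ and $Av$, multiply the two resulting inequalities, and invoke Theorem~\ref{thm:hadamard-uncertainty}, which gives $\norm v_1 \norm{Av}_1 \geq k \norm v_\infty \norm{Av}_\infty$, i.e.\ $\tfrac{\norm v_1}{\norm v_\infty}\cdot\tfrac{\norm{Av}_1}{\norm{Av}_\infty} \geq k$. This produces
\[
	\ab{\supp_\varepsilon^1(v)}\,\ab{\supp_\eta^1(Av)} \;\geq\; (1-\varepsilon)(1-\eta)\,\frac{\norm v_1}{\norm v_\infty}\cdot\frac{\norm{Av}_1}{\norm{Av}_\infty} \;\geq\; k(1-\varepsilon)(1-\eta).
\]
Finally, the second inequality in the statement is just the elementary observation that $(1-\varepsilon)(1-\eta) = 1 - \varepsilon - \eta + \varepsilon\eta \geq 1 - \varepsilon - \eta$.

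There is essentially no hard step here: the argument is a direct specialization of the paper's template, and the ``main obstacle'' is merely noticing that the fragile $L^0$-type support bound of Theorem~\ref{thm:hadamard-ds} upgrades painlessly to the $L^1$-approximate version because discarding an $\varepsilon$-fraction of the $L^1$ mass costs only a factor $(1-\varepsilon)$ in the norm ratio. The one point worth flagging explicitly (as the surrounding text does) is that this robustness is special to $p=1$: the same manipulation with $p=2$ does not bound $\ab{\supp_\varepsilon^2(u)}$ below by a multiple of $\norm u_1/\norm u_\infty$, which is why the $L^2$ statement of Donoho--Stark requires their separate submatrix-norm argument rather than this framework.
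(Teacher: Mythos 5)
Your proof is correct and follows essentially the same two-step argument as the paper: the same universal bound $\ab{\supp_\varepsilon^1(u)} \geq (1-\varepsilon)\norm u_1/\norm u_\infty$ (the paper states it in the equivalent form $\ab{\supp_\delta^1(u)}/(1-\delta) \geq \norm u_1/\norm u_\infty$), proved by the same combination of $\norm{u[T]}_1 \geq (1-\varepsilon)\norm u_1$ and $\norm{u[T]}_1 \leq \ab T\norm u_\infty$, then combined with Theorem~\ref{thm:hadamard-uncertainty}. Your phrasing is if anything slightly cleaner, since you never need the paper's aside that $T$ consists of the largest coordinates, only the trivial bound $\ab{u_i}\le\norm u_\infty$, and your form of the universal inequality avoids the paper's case split $\varepsilon,\eta<1$.
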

\begin{proof}
	The second inequality follows from the first, so it suffices to prove the first. We may assume that $\varepsilon,\eta<1$. The primary uncertainty principle, Theorem \ref{thm:hadamard-uncertainty}, says that
	\[
		\frac{\norm v_1}{\norm{v}_\infty}\cdot \frac{\norm{Av}_1}{\norm{Av}_\infty} \geq k.
	\]
	Following our framework, what we need to prove is the following claim: for every $\delta \in [0,1)$ and for every vector $u$, 
	\[
		 \frac{\ab{\supp_\delta^1(u)}}{1- \delta} \geq \frac{\norm u_1}{\norm u_\infty}
	\]
	Applying this inequality to $v$ with $\varepsilon$ and to $Av$ with $\eta$ yields the desired result, so it suffices to prove this claim. 

	Let $T =\supp_\delta^1(u)$. Note that since $\norm u_1 = \norm{u[T]}_1 + \norm{u[T^{\mathsf{c}}]}_1$, the definition of $T$ implies that $\norm{u[T]}_1 \geq (1- \delta) \norm u_1$.  Observe too that $\norm{u[T]}_\infty = \norm u_\infty$, since $T$ just consists of the coordinates of $u$ of maximal absolute value (with ties broken arbitrarily). Since $u[T]$ has length $\ab T$, this implies that $\norm{u[T]}_1 \leq \ab T \norm{u[T]}_\infty = \ab T \norm u_\infty$. Combining our inequalities, we find that
	\[
		(1- \delta) \norm u_1 \leq \norm{u[T]}_1 \leq \ab T \norm u_\infty = \ab{\supp_\delta^1(u)} \norm u_\infty,
	\]
	as claimed.
\end{proof}

Again, we obtain as a special case an uncertainty principle for the $L^1$ approximate support of a function and its Fourier transform. 
\begin{cor}\label{cor:approx-support}
	Let $G$ be a finite abelian group and $f:G \to \C$ a non-zero function. For any $\varepsilon,\eta \in [0,1]$, we have that
	\[
		\ab{\supp_\varepsilon^1 (f)} \ab{\supp_\eta^1(\hat f)} \geq \ab G (1- \varepsilon)(1- \eta) \geq \ab G (1- \varepsilon- \eta).
	\]
\end{cor}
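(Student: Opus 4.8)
The plan is to deduce Corollary~\ref{cor:approx-support} from the $L^1$-approximate support uncertainty principle for $k$-Hadamard matrices, Theorem~\ref{thm:approx-support}, by simply observing that the Fourier transform matrix of a finite abelian group is $k$-Hadamard with $k = \ab G$. This is exactly the ``second part'' of the paper's two-part template being trivial in this instance: all the work has already been done in Theorem~\ref{thm:approx-support}, and what remains is to package the Fourier transform as an instance of a $k$-Hadamard operator.

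Concretely, first I would identify a function $f:G\to\C$ with the vector $v\in\C^n$ of its values, where $n=\ab G$, and identify $\hat f:\wh G\to\C$ with the vector $Fv$, where $F$ is the $n\times n$ matrix with rows indexed by characters $\chi\in\wh G$, columns indexed by $x\in G$, and $(\chi,x)$-entry equal to $\ol{\chi(x)}$ — this is precisely the content of the formula $\hat f(\chi)=\sum_{x\in G}f(x)\ol{\chi(x)}$. Under this identification $\supp_\varepsilon^1(f)$ and $\supp_\eta^1(\hat f)$ are literally $\supp_\varepsilon^1(v)$ and $\supp_\eta^1(Fv)$, since the $(p,\varepsilon)$-support size depends only on the multiset of coordinate values. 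Next I would record that every entry of $F$ has absolute value $1$ (as $\chi(x)\in\T$), and that the orthogonality relations for characters (equivalently, the Fourier inversion formula) give $F^*F=nI$; hence $\norm{F^*Fv}_\infty = n\norm v_\infty$, so $F$ is $n$-Hadamard. (This was already observed in the list of examples of $k$-Hadamard matrices.) Finally I would apply Theorem~\ref{thm:approx-support} to $A=F$ with $k=n=\ab G$, which yields $\ab{\supp_\varepsilon^1(f)}\ab{\supp_\eta^1(\hat f)} \geq \ab G(1-\varepsilon)(1-\eta)$, and the second inequality $(1-\varepsilon)(1-\eta)\geq 1-\varepsilon-\eta$ is the elementary bound already noted in Theorem~\ref{thm:approx-support}.

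There is essentially no obstacle here: the only point requiring a moment's care is the bookkeeping of the normalization — one must use the version of the Fourier transform matrix whose entries are the characters themselves (so that all entries lie on the unit circle and $F^*F=\ab G\, I$), rather than the unitary normalization $\tfrac{1}{\sqrt n}F$ — but this matches the conventions already fixed in the paper, so the corollary follows in a couple of lines.
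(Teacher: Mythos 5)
Your proposal is correct and matches the paper's (unwritten) proof exactly: the paper presents the corollary as an immediate special case of Theorem~\ref{thm:approx-support}, obtained by taking $A$ to be the Fourier transform matrix, which is $\ab G$-Hadamard as already noted in Section~\ref{subsec:k-hadamard-ex}. Your care about the normalization (entries on the unit circle so that $F^*F=\ab G I$, not the unitary $\tfrac{1}{\sqrt n}F$) is the right detail to check, and it aligns with the conventions the paper has already fixed.
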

At first glance, Corollary \ref{cor:approx-support} looks quite similar to Theorem \ref{thm:ds-approx-support}. However, it is easy to construct vectors whose $(2,\varepsilon)$-support is much smaller than their $(1,\varepsilon)$-support. For instance, we can take $v_H \in \C^n$ to be the ``harmonic vector'' $(1,\frac 12, \frac 13,\ldots,\frac 1n)$. Then for any fixed $\varepsilon \in (0,1)$ and large $n$, we have that
\[
	\supp_\varepsilon^1(v_H) = \Theta_\varepsilon(n^{1- \varepsilon}) \qquad \text{ and } \qquad \supp_\varepsilon^2(v_H) = \Theta(\varepsilon^{-2}).
\]
In particular, if we keep $\varepsilon$ fixed and let $n \to \infty$, we see that $\supp_\varepsilon^1(v_H)$ will be much larger than $\supp_\varepsilon^2(v_H)$, which suggests that Theorem \ref{thm:ds-approx-support} will be stronger than Corollary \ref{cor:approx-support} for such ``long-tailed'' vectors. In fact, this is not a coincidence or a special case: for constant $\varepsilon$, the $1$-support will be at least as large than the $2$-support of any vector. More precisely, we have the following.
\begin{prop}\label{prop:1-supp-vs-2-supp}
	For any vector $v \in \C^n$ and any $\varepsilon \in (0,1)$,
	\[
		\ab{\supp_{\varepsilon^2}^1(v)} \geq \ab{\supp_\varepsilon^2(v)}.
	\]
\end{prop}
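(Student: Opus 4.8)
The plan is to reduce the statement to a one-dimensional inequality about monotone sequences. Relabel the coordinates of $v$ by a permutation so that $a_1 := \ab{v_{\pi(1)}} \geq a_2 \geq \cdots \geq a_n$. First I would record the (standard) observation that, for any $p$ and any $\delta$, the minimum in the definition of $\ab{\supp_\delta^p(v)}$ is attained by a \emph{prefix} set: among all $T$ with $\ab T = t$, the quantity $\norm{v[T^{\mathsf c}]}_p$ is smallest when $T$ consists of the $t$ largest coordinates (this is exactly the structure already used in the proof of Theorem~\ref{thm:approx-support}). Hence $\ab{\supp_\delta^p(v)}$ equals the least $t$ with $\sum_{i>t} a_i^p \leq \delta^p \sum_{i=1}^n a_i^p$. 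With this, Proposition~\ref{prop:1-supp-vs-2-supp} reduces to: if $s$ is such that $\sum_{i>s} a_i \leq \varepsilon^2 \sum_i a_i$, then also $\sum_{i>s} a_i^2 \leq \varepsilon^2 \sum_i a_i^2$. Indeed, taking $s = \ab{\supp_{\varepsilon^2}^1(v)}$, the hypothesis holds by definition, and the conclusion exhibits the prefix $\{1,\dots,s\}$ as a valid $(2,\varepsilon)$-support, forcing $\ab{\supp_\varepsilon^2(v)} \leq s$.

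The key inequality to prove is then that the $\ell^2$ tail fraction never exceeds the $\ell^1$ tail fraction:
\[
\frac{\sum_{i>s} a_i^2}{\sum_{i\geq 1} a_i^2} \;\leq\; \frac{\sum_{i>s} a_i}{\sum_{i\geq 1} a_i},
\]
after which the hypothesis $\sum_{i>s} a_i \leq \varepsilon^2 \sum_i a_i$ immediately gives $\sum_{i>s} a_i^2 \leq \varepsilon^2 \sum_i a_i^2$. To prove it, set $m = a_s$, $P = \sum_{i\leq s} a_i$, $Q = \sum_{i>s} a_i$, $P' = \sum_{i\leq s} a_i^2$, $Q' = \sum_{i>s} a_i^2$. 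Monotonicity of $(a_i)$ gives $Q' \leq mQ$ (every tail term is $\leq m$) and $P' \geq mP$ (every head term is $\geq m$). Clearing denominators, the desired inequality $\tfrac{Q'}{P'+Q'} \leq \tfrac{Q}{P+Q}$ is equivalent to $Q'P \leq QP'$, and indeed $Q'P \leq mQP \leq QP'$.

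There is no real obstacle here; the one point requiring care is the first reduction — namely that a single prefix set of the sorted order is simultaneously optimal for both the $\ell^1$ and the $\ell^2$ notion of approximate support — which is exactly where the monotone relabeling is used. The degenerate cases are harmless: if $v = 0$ both support sizes are $0$, and $\ab{\supp_{\varepsilon^2}^1(v)} = 0$ already forces $v = 0$ since $\varepsilon < 1$, so we may assume $s \geq 1$ and $P' + Q' > 0$, making all the ratios above well defined.
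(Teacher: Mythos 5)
Your proof is correct, and it takes a genuinely different and substantially simpler route than the paper. The paper's proof is an extremal argument by contradiction: it normalizes, appeals to compactness to extract a counterexample of minimal $L^2$ norm, decomposes the sorted vector into three blocks $v_L, v_M, v_R$, uses strict convexity of the $L^2$ norm to argue the first two blocks must be constant in a minimal counterexample, and then derives a numerical contradiction from the resulting two-parameter structure. You instead give a direct proof: after sorting and reducing to prefix sets (the same first step the paper takes), you isolate the clean general inequality
\[
\frac{\sum_{i>s} a_i^2}{\sum_i a_i^2} \;\leq\; \frac{\sum_{i>s} a_i}{\sum_i a_i}
\]
for any nonnegative decreasing sequence and any cut point $s$, and you prove it in one line via the Chebyshev-sum-type observation that every tail term is at most $a_s$ and every head term is at least $a_s$, giving $Q' \leq a_s Q$ and $P' \geq a_s P$, hence $Q'P \leq QP'$. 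This isolates the real reason the proposition is true (the $\ell^2$ tail fraction is always dominated by the $\ell^1$ tail fraction for sorted nonnegative vectors), avoids compactness and the contradiction scaffolding entirely, and is arguably more illuminating; the only cost is that you need to state the prefix-optimality observation explicitly, which the paper's proof also implicitly relies on. Your treatment of the degenerate case $v = 0$ (and the observation that $\ab{\supp_{\varepsilon^2}^1(v)} = 0$ forces $v = 0$ when $\varepsilon < 1$) is correct and ensures the denominators in the cross-multiplication are positive.
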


This proposition demonstrates that in general, Theorem \ref{thm:ds-approx-support} is stronger than our Corollary \ref{cor:approx-support}. Because the proof is somewhat technical, we defer it to Appendix \ref{sec:appendix}.

To conclude this section, we state a generalization of Theorem \ref{thm:ds-approx-support} that applies to all unitary $k$-Hadamard matrices.
\begin{thm}[$L^2$-approximate support uncertainty principle]\label{thm:our-2-support}
	Let $A \in \C^{n \times n}$ be a $k$-Hadamard matrix with $A^*A=kI$, and let $\varepsilon,\eta \in [0,1]$. Let $v \in \C^n$ be a non-zero vector. Then 
	\[
		\ab{\supp_\varepsilon^2(v)} \ab{\supp_\eta^2(Av)} \geq k (1- \varepsilon-\eta)^2.
	\]
\end{thm}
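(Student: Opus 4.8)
Unlike the earlier results in this section, I would not try to route this through the two-part template of the Introduction: the $L^2$ approximate support genuinely interacts with the operator (it is insensitive to cancellation among coordinates in a way that $H_0$ is not), so there is no clean universal bound of the form $H(g)\geq C'H_0(g)$ to feed into the primary uncertainty principle. Instead the plan is to adapt Donoho and Stark's original operator-theoretic argument, which carries over verbatim once one replaces ``$\norm{\hat f}_2=\norm f_2$'' by the hypothesis $A^*A=kI$ (which gives $\norm{Av}_2=\sqrt k\,\norm v_2$ for all $v$). Fix minimizers $T=\supp_\varepsilon^2(v)$ and $W=\supp_\eta^2(Av)$ in the two support definitions, and let $P_T$ and $Q_W$ be the diagonal $0/1$ projections onto the coordinates in $T$ (in the domain) and in $W$ (in the codomain). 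By definition of the $(2,\varepsilon)$- and $(2,\eta)$-support, $\norm{v-P_Tv}_2=\norm{v[T^{\mathsf c}]}_2\leq\varepsilon\norm v_2$ and $\norm{Av-Q_W(Av)}_2\leq\eta\norm{Av}_2$. (We may assume $\varepsilon+\eta<1$, as otherwise the right-hand side of the claim is nonpositive and there is nothing to prove.)

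The heart of the argument is then to sandwich $\norm{Q_WAP_Tv}_2$ between two bounds. For the lower bound, write $Av-Q_WAP_Tv=(Av-Q_W(Av))+Q_W\!\left(A(v-P_Tv)\right)$; the triangle inequality together with $\norm{Q_W}_{2\to2}\leq1$ and $\norm{A(v-P_Tv)}_2=\sqrt k\,\norm{v-P_Tv}_2\leq\varepsilon\sqrt k\,\norm v_2=\varepsilon\norm{Av}_2$ gives $\norm{Av-Q_WAP_Tv}_2\leq(\varepsilon+\eta)\norm{Av}_2$, hence $\norm{Q_WAP_Tv}_2\geq(1-\varepsilon-\eta)\norm{Av}_2=(1-\varepsilon-\eta)\sqrt k\,\norm v_2$. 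For the upper bound, note that $Q_WAP_T$ is just the zero-padding of the submatrix of $A$ with rows in $W$ and columns in $T$, so its spectral norm is at most its Frobenius norm, which is at most $\sqrt{\ab W\,\ab T}$ because every entry of $A$ has absolute value at most $1$ (this is where $k$-Hadamard-ness is used); thus $\norm{Q_WAP_Tv}_2\leq\sqrt{\ab W\,\ab T}\,\norm v_2$. Combining the two bounds and dividing by $\norm v_2\neq0$ yields $(1-\varepsilon-\eta)\sqrt k\leq\sqrt{\ab W\,\ab T}$, and squaring gives $\ab T\,\ab W\geq k(1-\varepsilon-\eta)^2$, which is the assertion since $\ab T=\ab{\supp_\varepsilon^2(v)}$ and $\ab W=\ab{\supp_\eta^2(Av)}$.

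There is no real obstacle here — the argument is a few lines and essentially identical to the classical one. The only point that deserves care is \emph{which} hypothesis is actually being used: the step $\norm{A(v-P_Tv)}_2=\sqrt k\,\norm{v-P_Tv}_2$ requires $A^*A=kI$, not merely that $A$ is $k$-Hadamard, and the Frobenius-norm step is exactly the place the entrywise bound $\ab{A_{ij}}\leq1$ enters. This dependence on the stronger ``unitary'' hypothesis is precisely why the $L^2$ statement is less general than the $L^1$ statement of Theorem \ref{thm:approx-support}, and it is worth flagging explicitly in the write-up.
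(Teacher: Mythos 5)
Your proof is correct and is essentially the Donoho--Stark argument, which is exactly what the paper's appendix proof does: you sandwich the operator norm of the rectangular submatrix $Q_WAP_T$ between a Frobenius-norm upper bound $\sqrt{\ab W\ab T}$ and the triangle-inequality lower bound $(1-\varepsilon-\eta)\sqrt k$, whereas the paper sandwiches the equivalent submatrix $M=P_SU^*P_T$ of $U^*=A^*/\sqrt k$ applied to $Uv$. The two are the same argument up to taking adjoints and where one puts the $\sqrt k$ rescaling, and you correctly flag the two places the unitarity hypothesis $A^*A=kI$ and the entrywise bound are each used.
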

This proof is essentially identical to the original proof from \cite{DoSt}, so we defer it to Appendix \ref{sec:appendix}. We stress that this proof we need to assume the stronger condition that $A^*A=kI$: unlike our other proofs, this proof uses crucially and repeatedly the fact that $\frac{1}{\sqrt k}A$ preserves the $L^2$ norm under this assumption. Similarly, it is important for this proof that the matrix $A$ be square, since we will need the same property for $A^*$. 

We also note that it is impossible to prove this result in the same way we proved Theorem \ref{thm:approx-support}. Indeed, such a proof would necessarily need a $2 \to \infty$ norm uncertainty principle of the form $\norm v_2 \norm{Av}_2 \geq C \norm v_\infty \norm{Av}_\infty$. In the next subsection, we will show (in Theorem \ref{thm:no-norm-up}) that such an inequality cannot hold unless $C$ is  a constant independent of $k$. Therefore, one necessarily has to use an alternative approach (and stronger properties) to prove Theorem \ref{thm:our-2-support}.

\subsection{Uncertainty principles for other norms: possibility and impossibility}

Generalizing the primary uncertainty principle, it is natural and useful to try to prove other uncertainty principles on norms for the finite Fourier transform, or more generally for other $k$-Hadamard operators. Indeed, it seems that one can use Theorem \ref{thm:general-case} directly to derive inequalities of the form $\norm v_p \norm{A v}_p \geq c(k) \norm v_q \norm{A v}_q$ for other norms $p$ and $q$, and for some constant $c(k)$. 

However, the situation for other norms is trickier than for $p=1$ and $q=\infty$. It is instructive to try this for two prototypical cases: first, $p=1$ and $q=2$, and next, $p=2$ and $q=\infty$. In both cases, the constant $c(k)$ we obtain from a  direct use of the general theorem is 1. As we shall see, this happens for different reasons in these two cases. In the first (and generally for $p=1$ and any $q$), one can obtain a much better inequality, indeed a tight one, {\em indirectly} from the case $p=1$ and $q=\infty$.  In the second (and in general for $p\geq 2$ and any $q$), the constant $c(k)=1$ happens to be essentially optimal, and one can only obtain a trivial result. We turn now to formulate and prove each of these statements. 
We note that, besides natural mathematical curiosity, there is a good reason to consider uncertainty principles for other norms: indeed, (a version of) the one we prove here for $p=1$ and $q=2$ will be key to our new proof of Heisenberg's uncertainty principle in Section~\ref{subsec:hup}.

\subsubsection{Optimal norm uncertainty inequalities for \texorpdfstring{$p=1$}{p=1}}
Consider first the case $p=1$ and $q=2$, and suppose we wish to prove such a norm uncertainty principle for the Fourier transform on a finite abelian group $G$. To apply Theorem \ref{thm:general-case}, we would need to scale the Fourier transform matrix into a matrix $A$ with $\norm A_{1 \to 2}, \norm{A^*}_{1 \to 2} \leq 1$. The way to do so is to rescale so all the entries of $A$ have absolute value $1/\sqrt{\ab G}$. But in that case $A^* A =I$, so one would simply obtain the inequality $\norm v_1 \norm{A v}_1 \geq \norm v_2 \norm{A v}_2$, which is not sharp. In fact, this inequality is trivial (and has nothing to do with ``uncertainty''), since any vector $u$ satisfies $\norm u_1 \geq \norm u_2$. 
To obtain a stronger inequality, which is sharp, we instead use a simple  {\em reduction} to the $1 \to \infty$ result of Theorem \ref{thm:hadamard-uncertainty}, which is a variant of the two-step framework articulated in the Introduction.

\begin{thm}[Norm uncertainty principle, $p=1$]\label{thm:hadamard-q-norm}
	Let $A \in \C^{m \times n}$ be $k$-Hadamard, and let $v \in \C^n$ be non-zero. Then for any $1 \leq q \leq \infty$, we have
	\[
		\norm v_1 \norm{Av}_1 \geq k^{1-1/q} \norm v_q \norm{Av}_q.
	\]
\end{thm}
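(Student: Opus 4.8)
The plan is to run the two-step framework of the Introduction in its ``variant'' form: combine the primary uncertainty principle of Theorem~\ref{thm:hadamard-uncertainty} with a \emph{universal} interpolation inequality that bounds the $L^q$ norm of a single vector by a geometric mean of its $L^1$ and $L^\infty$ norms. Concretely, the first step is to observe that for any vector $u \in \C^N$ and any $1 \le q \le \infty$,
\[
	\norm u_q \leq \norm u_1^{1/q}\, \norm u_\infty^{1-1/q}.
\]
For finite $q$ this is immediate from $\norm u_q^q = \sum_i \ab{u_i}\cdot\ab{u_i}^{q-1} \leq \norm u_\infty^{q-1}\sum_i \ab{u_i} = \norm u_\infty^{q-1}\norm u_1$, and for $q=\infty$ it is trivial; it is simply the log-convexity of $p \mapsto \norm u_p$ applied at the two endpoints $1$ and $\infty$. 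This is exactly the ``second part'' of the framework, an inequality that holds for every vector with no operator involved.

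The second step is to apply this bound to both $v$ and $Av$ and multiply, giving
\[
	\norm v_q \norm{Av}_q \leq \bigl(\norm v_1 \norm{Av}_1\bigr)^{1/q}\bigl(\norm v_\infty \norm{Av}_\infty\bigr)^{1-1/q}.
\]
Now we invoke the primary uncertainty principle in the form $\norm v_\infty \norm{Av}_\infty \leq \tfrac1k \norm v_1 \norm{Av}_1$ (Theorem~\ref{thm:hadamard-uncertainty}) to replace the second factor, obtaining
\[
	\norm v_q \norm{Av}_q \leq \bigl(\norm v_1 \norm{Av}_1\bigr)^{1/q}\Bigl(\tfrac1k \norm v_1 \norm{Av}_1\Bigr)^{1-1/q} = k^{-(1-1/q)}\,\norm v_1 \norm{Av}_1,
\]
and rearranging yields $\norm v_1 \norm{Av}_1 \geq k^{1-1/q}\,\norm v_q \norm{Av}_q$, as desired.

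There is essentially no obstacle here: the only point requiring any care is the bookkeeping with the exponent $1-1/q$ and checking the two extreme cases, namely that $q=\infty$ recovers Theorem~\ref{thm:hadamard-uncertainty} exactly and that $q=1$ degenerates to the trivial statement $\norm v_1 \norm{Av}_1 \ge \norm v_1 \norm{Av}_1$. One should also note that the argument uses only the conclusion of Theorem~\ref{thm:hadamard-uncertainty}, so it applies verbatim to any operator satisfying its hypotheses; in particular no use is made of $A^*A = kI$, only of the $k$-Hadamard property.
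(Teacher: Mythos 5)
Your proof is correct and essentially identical to the paper's: both hinge on the interpolation inequality $\norm u_q^q \le \norm u_1 \norm u_\infty^{q-1}$ (which you state in the equivalent form $\norm u_q \le \norm u_1^{1/q}\norm u_\infty^{1-1/q}$) applied to $v$ and $Av$, combined with the primary uncertainty principle. The only cosmetic difference is that the paper rearranges this into a lower bound on the ratio $\norm u_1/\norm u_q$ in terms of $\norm u_1/\norm u_\infty$ before multiplying, whereas you multiply first and then substitute; the algebra is equivalent.
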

\begin{proof} 
	The case $q=\infty$ is precisely Theorem \ref{thm:hadamard-uncertainty}, so we may assume that $q<\infty$. So, following our approach, all we need to prove is the bound
\begin{equation}\label{eq:q-norm-bound}
     \frac{\norm u_1} {\norm u_q} \geq \left( \frac{\norm u_1} {\norm u_\infty}\right)^{(q-1)/q}
\end{equation}
for any non-zero vector $u$, and plug it in our primary inequality $\norm v_1 \norm{Av}_1 \geq k \norm v_\infty\norm{Av}_\infty$ for both $v$ and $Av$. This is simple: we compute
\begin{align*}
	\norm u_q^q = \sum_{i=1}^n \ab{u_i}^q \leq \norm u_\infty^{q-1} \sum_{i=1}^n \ab{u_i} = \norm u_\infty^{q-1} \norm u_1,
\end{align*}
which implies that $\norm u_1^{q-1} \norm u_q^q \leq \norm u_1^q \norm u_\infty^{q-1}$, which yields the bound (\ref{eq:q-norm-bound}).
\end{proof}
We get as a special case an uncertainty principle for the discrete Fourier transform, which we believe has not been previously observed.
\begin{cor}\label{cor:fourier-q-up}
	For any $1 \leq q \leq \infty$, any finite abelian group $G$, and any non-zero function $f:G \to \C$, we have
	\[
		\norm f_1 \norm{\hat f}_1 \geq \ab G^{1-1/q} \norm f_q \norm{\hat f}_q.
	\]
\end{cor}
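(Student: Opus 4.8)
The plan is to obtain Corollary~\ref{cor:fourier-q-up} as an immediate specialization of Theorem~\ref{thm:hadamard-q-norm} to the case where the $k$-Hadamard matrix $A$ is the Fourier transform matrix of the finite abelian group $G$. So the only real content is to identify the right matrix and the right value of $k$.

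First I would recall, exactly as in the discussion of examples of $k$-Hadamard matrices, that if we normalize the Fourier transform so that its matrix $F$ has rows indexed by $\wh G$, columns indexed by $G$, and $(\chi,x)$ entry equal to $\ol{\chi(x)}$, then $(Ff)(\chi) = \sum_{x\in G} f(x)\ol{\chi(x)} = \hat f(\chi)$, so $F$ literally computes the Fourier transform as defined above. Every entry of $F$ lies on the unit circle, hence has absolute value at most $1$, and the Fourier inversion formula — equivalently, the orthogonality relations for the characters of $G$ — says precisely that $F^*F = \ab G I$. In particular $\norm{F^*Fv}_\infty = \ab G\norm v_\infty$ for all $v$, so $F$ is a $k$-Hadamard matrix (indeed a square one, since $\ab{\wh G} = \ab G$) with $k = \ab G$, in fact satisfying the stronger unitarity property $F^*F = kI$.

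Next I would simply invoke Theorem~\ref{thm:hadamard-q-norm} with $A = F$, parameter $k = \ab G$, and vector $v = f$. Since $Av = \hat f$, the theorem immediately gives
\[
	\norm f_1 \norm{\hat f}_1 \geq \ab G^{1-1/q} \norm f_q \norm{\hat f}_q
\]
for every $1 \leq q \leq \infty$ and every non-zero $f$, which is exactly the claimed inequality.

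There is essentially no obstacle here; the single point that deserves a moment's care is the bookkeeping of the normalization, namely that one must use the version of $F$ whose entries have absolute value exactly $1$ (rather than the unitary normalization $\frac{1}{\sqrt{\ab G}}F$), since it is precisely this normalization for which the $k$-Hadamard constant equals $\ab G$ and for which $Ff = \hat f$ on the nose. With that fixed, the corollary follows with no further work.
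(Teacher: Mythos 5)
Your proof is correct and is exactly the approach the paper intends: the corollary is stated as a direct specialization of Theorem~\ref{thm:hadamard-q-norm}, and you supply the routine verification (unit-modulus entries, $F^*F = \ab G I$, hence $\ab G$-Hadamard) that the paper leaves implicit.
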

\begin{rem}
	This is tight if $f$ is the indicator function of a subgroup $H \subseteq G$. In that case, $\hat f$ is a constant multiple of the indicator function of the dual subgroup $H^\perp \subseteq \wh G$, and we have that $\ab H \ab{H^\perp}=\ab G$. This shows that the result is tight, since the $q$-norm of an indicator function is exactly the $1/q$ power of its support size. 
\end{rem}

\subsubsection{No non-trivial norm uncertainty inequalities for \texorpdfstring{$p \geq 2$}{p≥2}}
Two special cases of Corollary \ref{cor:fourier-q-up}, one of which is just Theorem \ref{thm:hadamard-uncertainty}, are that if $A$ is $k$-Hadamard, then
\[
	\norm v_1 \norm{A v}_1 \geq k \norm v_\infty \norm{A v}_\infty \qquad \text{ and }\qquad \norm v_1 \norm{A v}_1 \geq \sqrt k \norm v_2 \norm{A v}_2.
\]
Looking at these two bounds, it is natural to conjecture that 
\begin{equation}\label{eq:guess}
	\norm v_2 \norm{A v}_2 \geq \sqrt k \norm v_\infty \norm{A v}_\infty,
\end{equation}
which would of course be best possible if true. If we again attempt to prove this for the Fourier transform directly from Theorem \ref{thm:general-case}, we need to scale the Fourier transform to a matrix $A$ with $2 \to \infty$ norm at most $1$, which again requires taking $A$ to have all entries of absolute value $1/\sqrt {\ab G}$. Then we again get that $A^*A=I$, and only obtain the trivial inequality $\norm v_2 \norm{A v}_2 \geq \norm v_\infty \norm{A v}_\infty$. 

In contrast to the previous subsection, this trivial bound is essentially tight, as shown by the following theorem. 
\begin{thm}\label{thm:no-norm-up}
	Let $G$ be a finite abelian group of order $n$, and let $A$ be the Fourier transform matrix of $G$. Let $p \in [2,\infty],q \in [1,\infty]$ be arbitrary. There exists a vector $v \in \C^n$  with
	\[
		\norm v_p \norm{A v}_p \leq 2 \norm v_q \norm{A v}_q.
	\]
	In particular, (\ref{eq:guess}) is false in general.
\end{thm}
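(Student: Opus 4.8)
The plan is to produce an explicit vector; no general principle is needed here, since we are establishing \emph{impossibility}. First I would dispose of the easy regime. If $q \leq p$, then $\norm w_p \leq \norm w_q$ for every $w$ (monotonicity of the $\ell^r$ norms with respect to counting measure), so applying this to $w=v$ and to $w=Av$ gives $\norm v_p \norm{Av}_p \leq \norm v_q \norm{Av}_q$ for \emph{every} nonzero $v$, and there is nothing to prove. In particular this handles $p=\infty$ entirely, since then $q \leq \infty = p$. Likewise $n=1$ is trivial. So I would reduce to the case $2 \leq p < \infty$ and $p < q \leq \infty$, with $n = \ab G \geq 2$.

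The key idea is that the conjectured bound \eqref{eq:guess} fails already for a vector that is ``one tall spike sitting on a flat, low floor.'' Such a vector $u$ has the feature that, once the floor is low enough, every norm $\norm u_r$ with $r \geq p$ is comparable to the spike height, which forces $\norm u_p/\norm u_q$ to be bounded by an absolute constant; and, crucially, the Fourier transform of a spike-plus-floor vector is again a spike-plus-floor vector. Concretely, let $0\in G$ denote the identity, let $\chi_0\in\wh G$ be the trivial character, and set
\[
	v = \delta_0 + t\,(\mathbf 1_G - \delta_0), \qquad t = (n-1)^{-1/p},
\]
so $v(0)=1$ and $v(x)=t$ for $x\neq 0$. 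Using $\hat\delta_0 = \mathbf 1_{\wh G}$ and $\widehat{\mathbf 1_G} = n\,\delta_{\chi_0}$, one computes $Av = \hat v = (1-t)\mathbf 1_{\wh G} + tn\,\delta_{\chi_0}$, i.e.\ $\hat v(\chi_0) = 1 + t(n-1)$ and $\hat v(\chi)=1-t$ for $\chi\neq\chi_0$ — a spike of height $1+t(n-1)$ over a floor of height $1-t\in[0,1]$.

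Now I would just estimate. The choice $t=(n-1)^{-1/p}$ is rigged so that $\norm v_p^p = 1 + (n-1)t^p = 2$ exactly, while $\norm v_\infty = 1$; hence $\norm v_p/\norm v_q \leq 2^{1/p}/\norm v_\infty \cdot (\norm v_\infty / \norm v_q) \leq 2^{1/p}$ since $\norm v_q \geq \norm v_\infty$. For $\hat v$: the floor contributes $(n-1)(1-t)^p \leq n-1$, and the spike contributes $(1+t(n-1))^p \geq (t(n-1))^p = (n-1)^{p-1} \geq n-1$, where the last step uses $p\geq 2$; therefore $\norm{\hat v}_p^p \leq 2(1+t(n-1))^p = 2\,\norm{\hat v}_\infty^p$, giving $\norm{\hat v}_p/\norm{\hat v}_q \leq 2^{1/p}$ as before. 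Multiplying the two bounds yields $\norm v_p\norm{Av}_p \leq 2^{2/p}\norm v_q\norm{Av}_q \leq 2\,\norm v_q\norm{Av}_q$, as desired; taking $p=2,q=\infty,k=n$ refutes \eqref{eq:guess} for $n>4$ (and small $n$ are checked directly).

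The main obstacle is purely in \emph{guessing the right vector}. The most natural candidates all fail, and fail badly: $\delta_0$, the indicator of a subgroup, and sums of a few deltas are exactly the vectors that \emph{saturate} the trivial bound with a factor of order $\sqrt n$, so they are useless here. One needs a vector whose profile is (up to rescaling) reproduced by the Fourier transform and which is simultaneously ``spiky'' in the $\ell^p$ sense at both ends; the spike-plus-floor family is essentially the unique elementary object with both properties, and the floor parameter $t$ must be placed in the window $n^{1/p-1}\lesssim t \lesssim n^{-1/p}$, which is nonempty precisely because $p\geq 2$ — this is exactly where the hypothesis on $p$ is used. Once the example is in hand, the remaining computation is routine.
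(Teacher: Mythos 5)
Your proof is correct, and it takes a genuinely (if modestly) different route from the paper's. The paper uses the specific vector $v=(1+\sqrt n,1,\ldots,1)$ and the key observation that $Av=\sqrt n\,v$ (it is an eigenvector of the normalized DFT); this makes the $\sqrt n$ cancel and reduces the whole problem to showing $\norm v_p^2/\norm v_q^2\le 2$ for that single vector, which is done via monotonicity of $x\mapsto\norm v_x$. You instead pick a non-eigenvector in the same two-dimensional $A$-invariant space $\mathrm{span}\{\delta_0,\mathbf 1_G\}$, with the floor height $t=(n-1)^{-1/p}$ tuned so that $\norm v_p^p=2$ exactly, and then bound $\norm v_p/\norm v_q$ and $\norm{Av}_p/\norm{Av}_q$ separately, each by $2^{1/p}$. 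Both proofs live on the same two-dimensional spike-plus-floor family; the paper's eigenvector trick saves you one of the two estimates, while your parameter choice makes the $v$-side computation exact and yields the clean bound $2^{2/p}$, which is where the hypothesis $p\ge 2$ visibly enters. The preliminary reduction to $q>p$ (handling $p=\infty$ and $q\le p$ by norm monotonicity) is correct and worth keeping explicit.

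One small inaccuracy in the closing sentence: for $n\le 4$ your bound $2^{2/p}\le 2\le \sqrt n$ does \emph{not} contradict \eqref{eq:guess}, and ``small $n$ are checked directly'' reads as if you would verify \eqref{eq:guess} fails there too, which it need not. But the theorem's ``false in general'' only requires a single group; any $G$ with $|G|>4$ (say $\Z/5\Z$) suffices, so simply drop the parenthetical.
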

\begin{proof} 
	We normalize $A$ so that all its entries have absolute value $1$, and assume without loss of generality that the first row and column of $A$ consist of all ones\footnote{This simply corresponds to indexing the rows and columns of $A$ so that the identity element of $G$ and $\wh G$ come first.}. 
	We define the vector $v=(1+\sqrt n, 1,1,\ldots,1) \in \C^n$. Then $Av=\sqrt n v$, i.e.\ $v$ is an eigenvector of $A$ with eigenvalue $\sqrt n$; this can be seen by observing that $v$ is the sum of $(\sqrt n,0,0,\ldots 0)$ and $(1,1,\ldots,1)$, and the action of $A$ on these vectors is to swap them and multiply each by $\sqrt n$.

	Moreover, we can compute that
	\[
		\norm v_p = \left[ \left(\sqrt n+1\right)^p+n-1 \right] ^{1/p} \qquad \text{and} \qquad \norm v_q =  \left[ \left(\sqrt n+1\right)^q+n-1 \right] ^{1/q}.
	\]
	We claim that for any $a \geq 1,b\geq 0$, the function $h(x) = (a^x+b)^{1/x}$ is monotonically non-increasing for $x\geq 1$. Indeed, its derivative is
	\begin{align*}
		h'(x) &= \frac{h(x)}{x^2(a^x+b)} \left( {a^x \log (a^x)}-(a^x+b)\log(a^x+b) \right).
	\end{align*}
	The term $h(x)/(x^2(a^x+b))$ is positive, and the function $t \mapsto t \log t$ is increasing for $t\geq 1$, which implies that the parenthesized term is non-positive, so $h'(x)\leq 0$. This implies that $\norm v_x$ is a non-increasing function of $x$, so we have that 
	\[
		\norm v_p \leq \norm v_2 = \sqrt{2n+2\sqrt n} \qquad \text{and} \qquad \norm v_q \geq \norm v_\infty = \sqrt n+1.
	\]
	In particular, we find that $\sqrt 2\norm v_q \geq \sqrt 2\norm v_\infty \geq \norm v_2 \geq \norm v_p$. This shows us that 
	\[
		\frac{\norm v_p}{\norm v_q}\cdot \frac{\norm{A v}_p}{\norm{A v}_q} = \frac{\norm v_p^2}{\norm v_q^2}\leq 2. \qedhere
	\]
\end{proof}

\subsubsection{The Hausdorff--Young inequality and the regime \texorpdfstring{$1<p<2$}{1<p<2}}
For the remaining range of $1<p<2$, we can obtain norm inequalities like those for $p=1$. However, we need two additional hypotheses. First, we need to assume that our $k$-Hadamard matrix satisfies the stronger unitarity property that $A^*A = k I$, while such an assumption was unnecessary in the $p=1$ case. Second, we will need to assume that the second norm index, $q$, is at most $p' = p/(p-1)$; this assumption was immaterial in the $p=1$ case, since the dual index of $1$ is $\infty$. We remark that we include this subsection only for completeness; the results here are known and use standard techniques, namely the Riesz--Thorin interpolation theorem and the log-convexity of the $L^p$ norms.

To do this, we first prove a discrete analogue of the Hausdorff--Young inequality. This inequality was already observed\footnote{They used this discrete Hausdorff--Young inequality to prove a discrete entropic uncertainty principle, analogous to that of Hirschman \cite{Hirschman}.} by Dembo, Cover, and Thomas \cite[Equation (52)]{DeCoTh}, who also stated it in the same general setting of unitary $k$-Hadamard matrices, as we now do.
\begin{prop}[Discrete Hausdorff--Young inequality]\label{prop:hausdorff-young}
	Let $A \in \C^{n \times n}$ be a $k$-Hadamard matrix with $A^*A = kI$. Fix $1 < p < 2$, and let $p' =p/(p-1) \in (2,\infty)$. Then $\norm A_{p \to p'} \leq k^{(p-1)/p}$.
\end{prop}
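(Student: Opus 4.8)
The plan is to prove the discrete Hausdorff--Young inequality by Riesz--Thorin interpolation between the two endpoint bounds we already have in hand. The two endpoints are $p=1$ and $p=2$. At $p=1$, the dual index is $p'=\infty$, and the relevant bound is $\norm{A}_{1\to\infty}\leq 1$, which holds by the very definition of a $k$-Hadamard matrix (all entries bounded by $1$ in absolute value). At $p=2$, the dual index is also $p'=2$, and the relevant bound is $\norm{A}_{2\to 2}\leq \sqrt{k}$, which holds because $A^*A=kI$ forces every singular value of $A$ to equal $\sqrt{k}$, so $A/\sqrt{k}$ is an isometry in the $L^2$ norm. So I would first record these two facts explicitly.

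Next I would set up the interpolation. Given $1<p<2$, write $\theta\in(0,1)$ so that $\frac1p = \frac{1-\theta}{1} + \frac{\theta}{2}$, i.e. $\frac1p = 1-\frac\theta2$, which gives $\theta = 2(1-1/p) = 2/p'$ after a short computation (since $1/p' = 1-1/p$). One checks that the corresponding target exponent $r$ defined by $\frac1r = \frac{1-\theta}{\infty}+\frac{\theta}{2} = \frac\theta2 = \frac1{p'}$ is exactly $r=p'$. Then the Riesz--Thorin interpolation theorem applied to the linear operator $A$ yields
\[
	\norm{A}_{p\to p'} \leq \norm{A}_{1\to\infty}^{1-\theta}\,\norm{A}_{2\to 2}^{\theta} \leq 1^{1-\theta}\cdot (\sqrt k)^{\theta} = k^{\theta/2} = k^{1/p'} = k^{(p-1)/p},
\]
which is the claimed bound.

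The only point requiring a little care is that Riesz--Thorin is usually stated for operators on $L^p$ spaces over a measure space; here one takes the underlying measure space to be $[n]$ (or $G$) with counting measure, so all the $L^p$ norms are the ordinary vector $p$-norms, and the theorem applies verbatim to the finite-dimensional linear map $A$. I do not expect any real obstacle here — the content is entirely in the two endpoint estimates, both of which are immediate from the hypotheses, and the exponent bookkeeping $\theta = 2/p'$, $r=p'$. One small alternative, if one wants to avoid invoking Riesz--Thorin as a black box, is to derive the bound by hand via Hölder interpolation of norms (log-convexity of $t\mapsto \log\norm{Av}_{1/t}$) applied to $Av$, combined with the two endpoint inequalities; but since the paper explicitly says it will use Riesz--Thorin and log-convexity, citing the interpolation theorem directly is the cleanest route, and that is what I would do.
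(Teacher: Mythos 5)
Your proof is correct and matches the paper's argument exactly: both interpolate via Riesz--Thorin between the endpoints $\norm{A}_{1\to\infty}\leq 1$ (from the $k$-Hadamard entry bound) and $\norm{A}_{2\to 2}=\sqrt{k}$ (from $A^*A=kI$). You simply spell out the $\theta$-bookkeeping that the paper leaves implicit, and your arithmetic is right.
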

\begin{proof}
	We already know that $\norm A_{1 \to \infty} \leq 1$, and our assumption that $A^* A=kI$ implies that $\norm A_{2 \to 2} = \sqrt k$. We may apply the Riesz--Thorin interpolation theorem \cite[Theorem IX.17]{ReSi} to these bounds, which implies that $\norm A_{p \to p'} \leq k^{(p-1)/p}$, as claimed.
\end{proof}
As a corollary, we obtain the following norm uncertainty principle for $1<p<2$.
\begin{thm}[Norm uncertainty principle, $1<p<2$]\label{thm:1<p<2}
	Let $A \in \C^{n \times n}$ be a $k$-Hadamard matrix with $A^*A = kI$. Let $p \in (1,2)$ and $q \in [p,p']$ be norm indices. Then for any $v \in \C^n$,
	\[
		\norm v_p \norm{Av}_p \geq k^{\frac{q-p}{pq}} \norm v_q \norm{Av}_q
	\]
\end{thm}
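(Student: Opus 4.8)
The plan is to follow the usual two-step template, except that the role of the ``primary'' inequality $\norm v_1\norm{Av}_1 \geq k\norm v_\infty\norm{Av}_\infty$ is played by a $p\to p'$ analogue extracted from the discrete Hausdorff--Young inequality, Proposition \ref{prop:hausdorff-young}. The first point to check is that $A^*$, and not just $A$, satisfies the hypotheses of that proposition: every entry of $A^*$ has absolute value at most $1$, and since $A$ is square with $A^*A = kI$ we have $A^* = kA\inv$ and hence $AA^* = kI$, so $(A^*)^*(A^*) = AA^* = kI$. Thus $A^*$ is $k$-Hadamard with the stronger unitarity property, and Proposition \ref{prop:hausdorff-young} applied to both $A$ and $A^*$ gives $\norm{Av}_{p'}\leq k^{(p-1)/p}\norm v_p$ and $\norm{A^*u}_{p'}\leq k^{(p-1)/p}\norm u_p$.

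Next I would mimic the one-line proof of Theorem \ref{thm:primary-up}: taking $u = Av$ in the second bound and using $A^*Av = kv$, so that $\norm{A^*Av}_{p'} = k\norm v_{p'}$, and multiplying the two inequalities gives
\[
	k^{2(p-1)/p}\,\norm v_p\,\norm{Av}_p \;\geq\; \norm{Av}_{p'}\,\norm{A^*Av}_{p'} \;=\; k\,\norm v_{p'}\,\norm{Av}_{p'}.
\]
Since $2(p-1)/p - 1 = (p-2)/p$, this rearranges to the intermediate ``$p\to p'$ uncertainty principle''
\[
	\norm v_p\,\norm{Av}_p \;\geq\; k^{(2-p)/p}\,\norm v_{p'}\,\norm{Av}_{p'},
\]
which is genuine uncertainty because $(2-p)/p>0$. (Equivalently, one could rescale $A$ to $k^{-(p-1)/p}A$ and invoke Theorem \ref{thm:general-case} directly with the $L^p$ and $L^{p'}$ norms; the direct multiplication above is cleaner.)

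Finally comes the second step of the framework, which here uses only log-convexity of the $L^q$ norms: for $q\in[p,p']$ and any vector $u$ one has $\norm u_q \leq \norm u_p^{\theta}\norm u_{p'}^{1-\theta}$, where $\tfrac1q = \tfrac\theta p + \tfrac{1-\theta}{p'}$. Applying this to $v$ and to $Av$, multiplying, and then using the intermediate inequality to replace $\norm v_{p'}\norm{Av}_{p'}$ by (a power of $k$ times) $\norm v_p\norm{Av}_p$, and collecting the resulting power of $\norm v_p\norm{Av}_p$, yields $\norm v_p\norm{Av}_p \geq k^{(1-\theta)(2-p)/p}\norm v_q\norm{Av}_q$. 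The only remaining task is exponent bookkeeping: using $\tfrac1{p'} = \tfrac{p-1}{p}$ one solves $1-\theta = \tfrac{p-q}{q(p-2)}$ (which indeed lies in $[0,1]$ precisely because $q\in[p,p']$), and then $(1-\theta)\cdot\tfrac{2-p}{p} = \tfrac{q-p}{pq}$, which is exactly the exponent claimed. I expect this arithmetic to be the only place demanding any care; there is no real obstacle, since the two analytic inputs---Riesz--Thorin interpolation (already used in Proposition \ref{prop:hausdorff-young}) and log-convexity of $L^p$ norms---are entirely standard, as the surrounding text notes.
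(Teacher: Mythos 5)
Your proof is correct and follows the same two-step route as the paper: apply the discrete Hausdorff--Young bound to both $A$ and $A^*$ and multiply, using $A^*Av=kv$, to obtain the intermediate inequality $\norm v_p\norm{Av}_p\geq k^{(2-p)/p}\norm v_{p'}\norm{Av}_{p'}$, then interpolate down from $p'$ to $q$ via log-convexity of $L^r$ norms (your $\theta$ is the paper's $1-\theta$, but the arithmetic and the final exponent $\tfrac{q-p}{pq}$ agree). The only addition beyond the paper's argument is your explicit check that $A^*$ is also $k$-Hadamard with $AA^*=kI$, a small point the paper leaves implicit when invoking Proposition \ref{prop:hausdorff-young} for $A^*$.
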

\begin{proof}
	First, suppose that $q=p'$. In that case, we may multiply the conclusion of Proposition \ref{prop:hausdorff-young} for $A$ and $A^*$ and conclude that
	\[
		k^{\frac{2(p-1)}p} \norm v_p \norm{Av}_p \geq \norm {Av}_{p'} \norm{A^* Av}_{p'} =  k \norm v_{p'}\norm{Av}_{p'},
	\]
	which implies the desired bound $\norm v_p \norm{Av}_p \geq k^{\frac{2-p}{p}} \norm v_{p'} \norm{Av}_{p'}$. 

	For smaller values of $q$, we use the above as a primary uncertainty principle, and derive the result by showing that
	\begin{equation}\label{eq:p-q-p'}
		\frac{\norm u_p}{\norm u_q} \geq \left(\frac{\norm u_p}{\norm u_{p'}}\right)^{\frac{p-q}{pq-2q}}
	\end{equation}
	for any non-zero vector $u$ and any $p \leq q \leq p'$. Let $\theta \in [0,1]$ be the unique number such that
	\[
		\frac 1q = \frac{1-\theta}{p}+ \frac{\theta}{p'},
	\]
	namely $\theta = \frac{p-q}{pq - 2q}$. 
	Then the log-convexity of the $L^p$ norms (also known as the generalized H\"older inequality) says that $\norm u_q \leq \norm u_p^{1- \theta} \norm u_{p'}^\theta$, and rearranging this yields (\ref{eq:p-q-p'}).

	We now apply (\ref{eq:p-q-p'}) to $u=v$ and $u=Av$, and conclude that
	\[
		\frac{\norm v_p}{\norm v_{q}} \cdot \frac{\norm{Av}_p}{\norm{Av}_{q}} \geq \left( \frac{\norm v_p}{\norm v_{p'}} \cdot \frac{\norm{Av}_p}{\norm{Av}_{p'}} \right) ^{\frac{p-q}{pq-2q}} \geq \left( k^{\frac{2-p}{p}} \right) ^{\frac{p-q}{pq-2q}} = k^{\frac{q-p}{pq}}.\qedhere
	\]
\end{proof}
\begin{rem}
	We remark that, as with the $1 \to q$ norm uncertainty principles above, Theorem \ref{thm:1<p<2} is tight for the Fourier transform. Indeed, if $v$ is the indicator vector of a subgroup of $G$, then $Av$ will be a constant multiple of the indicator vector of the dual subgroup, and the inequality in Theorem \ref{thm:1<p<2} will be an equality in this case.
\end{rem}

What these subsections demonstrate is that the $1 \to \infty$ result of Theorem \ref{thm:hadamard-uncertainty} is the strongest result of its form, in two senses. First, it implies the optimal $1 \to q$ inequalities for any $1 \leq q \leq \infty$, which cannot be obtained by a direct application of Theorem \ref{thm:general-case}. Second, such $p \to q$ uncertainty principles for $p>1$ are false in general, as shown by the fact that one cannot obtain a super-constant uncertainty even for the Fourier transform when $p \geq 2$. In the regime $1<p<2$, one can obtain tight inequalities whenever $p \leq q \leq p'$, at least for $k$-Hadamard matrices that satisfy the unitarity property $A^*A=kI$.

\section{Uncertainty principles in infinite dimensions}\label{sec:infinite-dim}
In this section, we will state and prove various uncertainty principles that hold in infinite-dimensional vector spaces, primarily the Heisenberg uncertainty principle and its generalizations. We begin in Section \ref{subsec:arbitrary-groups} with general results that hold for the Fourier transform on arbitrary locally compact abelian groups. We then restrict to $\R$, and discuss in Section \ref{subsec:k-Hadamard-infinite} a large class of operators for which our results hold, namely infinite-dimensional analogues of the $k$-Hadamard matrices we focused on in Section \ref{sec:finite-dim}. These include the so-called Linear Canonical Transforms (LCT), a family of integral transforms generalizing of the Fourier and other transforms, which arise primarily in applications to optics. Finally, we move to prove the Heisenberg uncertainty principle and its variations for such operators in Section \ref{subsec:hup}. In addition to obtaining a new proof which avoids using the analytic tools common in existing proofs, we also prove a number of generalizations. Most notably, we establish uncertainty principles for higher moments than the variance\footnote{Such results were already obtained by Cowling and Price \cite{CoPr}, but again, our proof avoids their heavy analytic machinery.}. We also give new inequalities which are similar to Heisenberg's but are provably incomparable. We remark that in some of our proofs of existing inequalities, the constants obtained are worse than in the classical proofs.

\subsection{The Fourier transform on locally compact abelian groups}\label{subsec:arbitrary-groups}
We begin by recalling the basic definitions of the Fourier transform on locally compact abelian\footnote{In fact, we believe that, as in Section \ref{subsec:non-abelian-gps}, many of our results can be extended to infinite non-abelian groups, at least as long as all their irreducible representations are finite-dimensional.} groups, and proving some generalizations of earlier results in this context. 

Let $G$ be a locally compact abelian group. Then $G$ can be equipped with a left-invariant Borel measure $\mu$, called the \emph{Haar measure}, which is unique up to scaling. If we let $\wh G$ denote the set of continuous group homomorphisms $G \to \T$, then $\wh G$ is a group under pointwise multiplication. Moreover, if we topologize $\wh G$ with the compact-open topology, then $\wh G$ becomes another locally compact abelian group, which is called the \emph{Pontryagin dual} of $G$. Given a function $f \in L^1(G)$, we can define its Fourier transform $\hat f:\wh G \to \C$ by $\hat f(\chi) = \int f(x) \ol{\chi(x)} \dd \mu(x)$, and it is easy to see that $\hat f$ is a well-defined element of $L^\infty(\wh G)$. Moreover, having chosen $\mu$, there exists a unique Haar measure $\nu$ on $\wh G$ so that the \emph{Fourier inversion formula} holds, namely so that $f(x) = \int \hat f(\chi) \chi(x) \dd \nu(\chi)$ for $\mu$-a.e.\ $x$, as long as $\hat f \in L^1(\wh G)$. With this choice of $\nu$, we also have the \emph{Plancherel formula}, that $\int \ab{f}^2 \dd \mu = \int \ab{\hat f}^2 \dd \nu$, as long as one side is well-defined. From now on, we will fix these measures $\mu$ and $\nu$, and all $L^p$ norms of functions will be defined by integration against these measures. Observe that from the definition of $\hat f$ and from the Fourier inversion formula, we have that the Fourier transform and inverse Fourier transform have norm at most $1$ as operators $L^1 \to L^\infty$. Using this, we can prove an infinitary version of our primary uncertainty principle, Theorem \ref{thm:primary-up}. 

\begin{thm}[Primary uncertainty principle, infinitary version]\label{thm:infinite-primary-up}
	Let $G$ be a locally compact abelian group with a Haar measure $\mu$, and let $\wh G,\nu$ be the dual group and measure. Fix $1 \leq q \leq \infty$ and let $f \in L^1(G)$ be such that $\hat f \in L^1(\wh G)$. Then 
	\[
		\norm f_1 \norm{\hat f}_1 \geq \norm{f}_\infty \norm{\hat f}_\infty.
	\]
\end{thm}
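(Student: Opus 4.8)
The plan is to deduce this directly from the abstract primary uncertainty principle, Theorem~\ref{thm:primary-up}, with essentially no analytic input beyond what has already been recalled just before the statement. I would take $V$ to be the space of $f \in L^1(G)$ with $\wh f \in L^1(\wh G)$, equipped with the norms $\norm\cdot_1$ and $\norm\cdot_\infty$ coming from $\mu$; take $U$ to be the analogous space of functions on $\wh G$ (with norms from $\nu$); let $A\colon V \to U$ be the Fourier transform $Af = \wh f$; and let $B\colon U \to V$ be the inverse Fourier transform, $(Bg)(x) = \int g(\chi)\chi(x)\dd\nu(\chi)$. The role of the hypothesis $\wh f \in L^1(\wh G)$ is precisely to make $B$ applicable to $\wh f$ and to make the Fourier inversion formula available.

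Next I would verify the three hypotheses of Theorem~\ref{thm:primary-up} with $k=1$. First, $\norm{A}_{1\to\infty}\le 1$: since $\wh f(\chi) = \int f(x)\ol{\chi(x)}\dd\mu(x)$ and $\ab{\chi(x)}\equiv 1$, we get $\norm{\wh f}_\infty \le \norm f_1$. Second, $\norm{B}_{1\to\infty}\le 1$: the identical estimate applied to the inversion integral gives $\norm{Bg}_\infty \le \norm g_1$. Third, $\norm{B A f}_\infty \ge \norm f_\infty$: the Fourier inversion formula, valid because $\wh f \in L^1(\wh G)$, says exactly that $B\wh f = f$ $\mu$-almost everywhere, so in fact $\norm{BAf}_\infty = \norm f_\infty$. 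Plugging these into Theorem~\ref{thm:primary-up} (applied to $v=f$) yields $\norm f_1 \norm{\wh f}_1 \ge \norm{\wh f}_\infty \norm f_\infty$, which is the claim. Strictly speaking one does not even need the full statement of Theorem~\ref{thm:primary-up}: multiplying the two inequalities $\norm{\wh f}_\infty \le \norm f_1$ and $\norm f_\infty = \norm{B\wh f}_\infty \le \norm{\wh f}_1$ directly gives the result.

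There is essentially no substantive obstacle here; the theorem is, like its finite-dimensional prototype, nearly tautological once the two basic properties of the Fourier transform are in hand. The only points requiring a word of care are bookkeeping: (a) one must use the normalization of $\nu$ fixed earlier, so that the inversion formula holds with the stated constant; (b) one should observe that all four quantities in the inequality are finite, but this is automatic, since steps one and three already give $\norm{\wh f}_\infty \le \norm f_1 < \infty$ and $\norm f_\infty \le \norm{\wh f}_1 < \infty$; and (c) if one wishes to sidestep invoking Fourier inversion explicitly, one can instead work with the representative $B\wh f$ of $f$, which lies in $L^\infty$ automatically, and note $f = B\wh f$ a.e. None of these affect the one-line structure of the argument.
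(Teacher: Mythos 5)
Your proof is correct and follows essentially the same route as the paper: the paper's own argument simply establishes the two bounds $\norm{\hat f}_\infty \le \norm f_1$ and $\norm f_\infty \le \norm{\hat f}_1$ (the latter via Fourier inversion, as you note) and multiplies them, which is exactly the direct version you describe in your last sentence of the second paragraph. Framing it as an instance of Theorem~\ref{thm:primary-up} with $V$, $U$, $A$, $B$, $k=1$ is a fine equivalent packaging, and your bookkeeping remarks are accurate.
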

\begin{rem}
	Throughout this section, we will frequently need the assumption that $f \in L^1(G)$ and $\hat f \in L^1(\wh G)$. To avoid having to write this every time, we make the following definition.
	\begin{Def}[Doubly $L^1$ function]
		We call function $f:G \to \C$ \emph{doubly $L^1$} if $f \in L^1(G)$ and $\hat f \in L^1(\wh G)$. 
	\end{Def}
	Note that $f$ being doubly $L^1$ implies that $f, \hat f \in L^\infty$, and thus that $f,\hat f \in L^p$ for all $p \in [1,\infty]$ by H\"older's inequality.
\end{rem}
\begin{proof}[Proof of Theorem \ref{thm:infinite-primary-up}]
	For any $\chi \in \wh G$, we have that
	\[
		\ab{\hat f(\chi)} = \bab{\int f(x) \ol{\chi(x)} \dd \mu(x)}\leq \int \ab{f(x)} \dd \mu(x) = \norm f_1,
	\]
	since $\ab{\chi(x)}=1$. This implies that $\norm {\hat f}_\infty \leq \norm f_1$. For the same reason, we see that $\norm f_\infty \leq \norm{\hat f}_1$. Multiplying these inequalities gives the desired result. 
\end{proof}
\begin{rem}
	If we take $G$ to be a finite abelian group, this result appears to be a factor of $\ab G$ worse than Theorem \ref{thm:hadamard-uncertainty}. However, this discrepancy is due to the fact that previously, we were equipping both $G$ and $\wh G$ with the counting measure, which are not dual Haar measures. If we instead equip them with dual Haar measures (e.g.\ equipping $G$ with the counting measure and then equipping $\wh G$ with the uniform probability measure), then this ``extra'' factor of $\ab G$ would disappear, and we would get the statement of Theorem \ref{thm:infinite-primary-up}.
\end{rem}

Using this theorem, we can obtain an analogue of the Donoho--Stark uncertainty principle, which holds for every locally compact abelian group. This result was first proved by Matolcsi and Sz\H ucs \cite{MaSz}, using the theory of spectral integrals.
\begin{thm}[Support-size uncertainty principle for general abelian groups]\label{thm:infinite-ds}
	Let $G,\mu,\wh G, \nu$ be as above. Let $f:G \to \C$ be non-zero and doubly $L^1$. Then $\mu(\supp(f)) \nu(\supp(\hat f)) \geq 1$. 
\end{thm}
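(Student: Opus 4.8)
The plan is to apply the infinitary primary uncertainty principle (Theorem~\ref{thm:infinite-primary-up}) exactly as the finite-dimensional support bound (Theorem~\ref{thm:hadamard-ds}) was deduced from Theorem~\ref{thm:hadamard-uncertainty}: prove a universal inequality bounding the measure of the support of a function by the ratio $\norm g_1/\norm g_\infty$, then feed it into the primary principle applied to both $f$ and $\hat f$.

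First I would record that $f$ doubly $L^1$ guarantees $f, \hat f \in L^\infty$ (as noted in the remark following Theorem~\ref{thm:infinite-primary-up}), so that $\norm f_\infty$ and $\norm{\hat f}_\infty$ are finite and the primary inequality $\norm f_1 \norm{\hat f}_1 \geq \norm f_\infty \norm{\hat f}_\infty$ from Theorem~\ref{thm:infinite-primary-up} is meaningful. Next, the universal step: for any $g \in L^1$ (with respect to some measure $\lambda$) and $S = \supp(g)$,
\[
	\norm g_1 = \int_S \ab{g}\dd\lambda \leq \lambda(S)\,\norm g_\infty,
\]
so $\lambda(\supp(g)) \geq \norm g_1 / \norm g_\infty$. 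Applying this to $f$ with $\lambda = \mu$ and to $\hat f$ with $\lambda = \nu$ gives $\mu(\supp(f)) \geq \norm f_1/\norm f_\infty$ and $\nu(\supp(\hat f)) \geq \norm{\hat f}_1/\norm{\hat f}_\infty$. Multiplying these two and using Theorem~\ref{thm:infinite-primary-up},
\[
	\mu(\supp(f))\,\nu(\supp(\hat f)) \geq \frac{\norm f_1}{\norm f_\infty}\cdot\frac{\norm{\hat f}_1}{\norm{\hat f}_\infty} \geq 1.
\]

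The one genuine subtlety — and I expect this to be the main point requiring care — is what happens when $\norm f_\infty = 0$ or when $\mu(\supp(f))$ (or $\nu(\supp(\hat f))$) is infinite, and more basically whether ``$\supp$'' is even well-defined up to null sets for functions on a general (non-discrete, possibly non-$\sigma$-finite) locally compact abelian group. Since $f$ is assumed non-zero, $\norm f_1 > 0$ and $\norm f_\infty > 0$; the Plancherel formula forces $\norm{\hat f}_2 = \norm f_2 > 0$, so $\hat f$ is non-zero as well, and likewise $\norm{\hat f}_1, \norm{\hat f}_\infty > 0$, so no quantity in the chain of inequalities is $0$ or undefined. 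If either $\mu(\supp(f))$ or $\nu(\supp(\hat f))$ is $+\infty$ the claimed inequality is trivially true, so one may assume both are finite, in which case the displayed computation goes through verbatim. One should also remark (as elsewhere in the paper) that $\supp(f)$ is only defined up to a $\mu$-null set, but since we only use its measure this ambiguity is harmless; I would fix a representative and move on. This completes the proof in a couple of lines, mirroring the abelian finite-group argument.
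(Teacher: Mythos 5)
Your proof is correct and takes exactly the same route as the paper: the universal bound $\lambda(\supp(g)) \geq \norm g_1/\norm g_\infty$ applied to both $f$ and $\hat f$, combined with Theorem~\ref{thm:infinite-primary-up}. The extra remarks about non-vanishing of norms and infinite supports are a reasonable addition of care that the paper leaves implicit.
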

\begin{proof}
	The proof follows that of Theorem \ref{thm:hadamard-ds}. Following our general approach, we claim that for any non-zero integrable function $g$ on any measure space $(X,\lambda)$, we have that 
	\[
		\lambda(\supp(g)) \geq \frac{\norm g_1}{\norm g_\infty}.
	\]
	Applying this to $f$ and $\hat f$ and combining it with the primary uncertainty principle, Theorem \ref{thm:infinite-primary-up}, yields the desired result. To prove the claim, we simply compute
	\[
		\norm g_1 = \int_X \ab{g(x)} \dd \lambda(x) = \int_{\supp(g)} \ab{g(x)} \dd \lambda(x) \leq \norm g_\infty \int_{\supp(g)} \dd \lambda(x) = \lambda(\supp(g)) \norm g_\infty. \qedhere
	\]
\end{proof}
In general, Theorem \ref{thm:infinite-ds} is tight. This can be seen, for instance, by recalling that it is equivalent to Theorem \ref{thm:donoho-stark} when $G$ is finite, and we already know that theorem to be tight when $f$ is the indicator function of a subgroup. However, Theorem \ref{thm:infinite-ds} is tight even for some infinite groups. For instance, let $G$ be any compact abelian group, and let $\mu$ be the Haar probability measure on $G$. Then $\wh G$ is a discrete group, and $\nu$ is the counting measure on $\wh G$. If we let $f:G \to \C$ be the constant $1$ function, then $\mu(\supp(f))=1$. Moreover, $\hat f$ will be the indicator function of the identity in $\wh G$, so $\nu(\supp(\hat f))=1$ as well. 

However, when we restrict to $G=\R$ and $\mu$ the Lebesgue measure, we find that Theorem \ref{thm:infinite-ds} is far from tight. Instead, the correct inequality is $\mu(\supp(f)) \nu(\supp(\hat f))=\infty$, as proven by Benedicks \cite{Benedicks} and strengthened by Amrein and Berthier \cite{AmBe}. The proofs of these results use the specific structure of $\R$, and we are not able to reprove them with our framework, presumably because our approach should work for any $G$, and Benedicks's result is simply false in general. There has been a long line of work on how much Theorem \ref{thm:infinite-ds} can be strengthened for other locally compact abelian groups $G$; see \cite[Section 7]{FoSi} for more.

\subsection{\texorpdfstring{$k$}{k}-Hadamard operators in infinite dimensions}\label{subsec:k-Hadamard-infinite}
Continuing to restrict to functions on $\R$, one can ask for other transforms which satisfy an uncertainty principle, just as previously we investigated all $k$-Hadamard matrices, and not just the Fourier transform matrices. From the proof of Theorem \ref{thm:infinite-primary-up}, and from the definition of $k$-Hadamard matrices, the following definition is natural. 
\begin{Def}
	We say that a linear operator $A:L^1(\R) \to L^\infty(\R)$ is \emph{$k$-Hadamard} if $\norm A_{1 \to \infty} \leq 1$ and if $\norm{A^*Af}_\infty \geq k \norm f_\infty$ for all functions $f$ with $f,Af \in L^1(\R)$. 
\end{Def}
\begin{rem}
	Extending our earlier use of the word, we will say that $f$ is \emph{doubly $L^1$ for $A$} if $f,Af \in L^1(\R)$. We will usually just say \emph{doubly $L^1$} and omit ``for $A$'' when $A$ is clear from context.
\end{rem}
The primary uncertainty principle for $k$-Hadamard operators, extending Theorem \ref{thm:infinite-primary-up}, is the following, whose proof is identical to that of Theorem \ref{thm:infinite-primary-up}.
\begin{thm}[Primary uncertainty principle for $k$-Hadamard operators]\label{thm:k-Hadamard-primary}
	Suppose $A$ is a $k$-Hadamard operator and $f$ is doubly $L^1$. Then
	\[
		\norm f_1 \norm{Af}_1 \geq k \norm f_\infty \norm{Af}_\infty.
	\]
\end{thm}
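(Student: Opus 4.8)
The plan is to mimic exactly the proof of Theorem~\ref{thm:infinite-primary-up}, which was itself a one-line variant of the finite-dimensional Theorem~\ref{thm:primary-up}. In fact, the statement here is essentially Theorem~\ref{thm:primary-up} with $V=U$ a suitable space of functions on $\R$, with $\norm\cdot_1$ and $\norm\cdot_\infty$ the usual $L^1$ and $L^\infty$ norms, and with $B=A^*$. The hypotheses of Theorem~\ref{thm:primary-up} are precisely what is packaged into the definition of a $k$-Hadamard operator together with the hypothesis that $f$ is doubly $L^1$, so morally there is nothing to do. I will nonetheless spell out the three lines for completeness, being mildly careful that all norms in sight are finite.

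First, I would note that since $f$ is doubly $L^1$ (for $A$), both $f$ and $Af$ lie in $L^1(\R)$, and moreover $\norm{Af}_\infty \le \norm A_{1\to\infty}\norm f_1 \le \norm f_1 < \infty$ and similarly $\norm{A^*Af}_\infty\le\norm{Af}_1<\infty$, so every quantity appearing in the claimed inequality is finite; in particular all products below are well-defined. Here I am using $\norm{A^*}_{1\to\infty}=\norm A_{1\to\infty}\le 1$, which holds since $\norm\cdot_1$ and $\norm\cdot_\infty$ are dual norms (as remarked after Theorem~\ref{thm:primary-up}).

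Next, the two key inequalities. From $\norm A_{1\to\infty}\le 1$ we get $\norm{Af}_\infty \le \norm f_1$. From $\norm{A^*}_{1\to\infty}\le 1$ applied to the function $Af\in L^1$ we get $\norm{A^*Af}_\infty \le \norm{Af}_1$. Finally, the $k$-Hadamard hypothesis gives $\norm{A^*Af}_\infty \ge k\norm f_\infty$. Multiplying the first two displayed inequalities and inserting the third,
\[
	\norm f_1\,\norm{Af}_1 \;\ge\; \norm{Af}_\infty\,\norm{A^*Af}_\infty \;\ge\; k\,\norm f_\infty\,\norm{Af}_\infty,
\]
which is the claim.

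There is essentially no obstacle here: the only thing to be vigilant about is that the $k$-Hadamard condition $\norm{A^*Af}_\infty\ge k\norm f_\infty$ is assumed to hold for all doubly $L^1$ functions $f$, so it applies to our $f$; and that one does not accidentally need any $L^2$ theory or unitarity — unlike the $L^2$-based arguments elsewhere in the paper, this proof uses only the two $L^1\to L^\infty$ bounds and the defining lower bound. So the proof is genuinely identical to that of Theorem~\ref{thm:infinite-primary-up}, with ``Fourier transform'' replaced by ``$A$'' and ``inverse Fourier transform'' replaced by ``$A^*$'', and one could legitimately just write ``the proof is identical to that of Theorem~\ref{thm:infinite-primary-up}'' as the paper in fact does.
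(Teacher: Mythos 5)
Your proof is correct and is exactly the argument the paper has in mind: the paper literally states that the proof is identical to that of Theorem~\ref{thm:infinite-primary-up}, and you have spelled out that identical argument (two $L^1\to L^\infty$ bounds multiplied together, then the defining lower bound on $\norm{A^*Af}_\infty$), with the duality remark about $\norm{A^*}_{1\to\infty}=\norm A_{1\to\infty}$ correctly filling the one gap the paper leaves implicit.
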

We can also extend the uncertainty principles for other norms seen in Theorem \ref{thm:hadamard-q-norm} to this infinite-dimensional setting, as follows.
\begin{thm}[Norm uncertainty principle, infinitary version]\label{thm:expanding-q-norm}
	Suppose $A$ is a $k$-Hadamard operator and $f$ is doubly $L^1$. Then for any $1 \leq q \leq \infty$,
	\[
		\norm f_1 \norm{Af}_1\geq k^{1-1/q} \norm f_q \norm{Af}_q.
	\]
\end{thm}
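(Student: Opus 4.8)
The plan is to mimic exactly the proof of Theorem~\ref{thm:hadamard-q-norm} from the finite-dimensional setting, replacing the finite-dimensional primary uncertainty principle with its infinitary analogue. The case $q=\infty$ is precisely Theorem~\ref{thm:k-Hadamard-primary}, so we may assume $1 \leq q < \infty$. Following the two-step framework, the primary inequality $\norm f_1 \norm{Af}_1 \geq k \norm f_\infty \norm{Af}_\infty$ is already in hand (Theorem~\ref{thm:k-Hadamard-primary}), so all that remains is the ``universal'' bound relating the $L^q$ norm to the ratio $H_0(g) = \norm g_1/\norm g_\infty$, applied separately to $f$ and to $Af$.

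Concretely, the key step is to prove that for any non-zero function $g$ (here $g$ will be $f$ or $Af$, both of which lie in $L^1 \cap L^\infty$ and hence in every $L^p$ by the doubly-$L^1$ hypothesis and H\"older),
\[
    \frac{\norm g_1}{\norm g_q} \geq \left( \frac{\norm g_1}{\norm g_\infty} \right)^{(q-1)/q}.
\]
This is the exact analogue of inequality~(\ref{eq:q-norm-bound}), and the proof is the same one-line interpolation computation: since $\ab{g(x)}^q \leq \norm g_\infty^{q-1} \ab{g(x)}$ pointwise, integrating gives $\norm g_q^q \leq \norm g_\infty^{q-1}\norm g_1$, i.e.\ $\norm g_1^{q-1}\norm g_q^q \leq \norm g_1^q \norm g_\infty^{q-1}$, which rearranges to the displayed bound. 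The only change from the finite-dimensional case is that the finite sum $\sum_i \ab{u_i}^q$ is replaced by the integral $\int \ab{g}^q \dd\mu$; one should note in passing that $\norm g_q < \infty$ so that the manipulation is legitimate, which is guaranteed since $g$ is doubly $L^1$.

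Combining: apply the universal bound to $g = f$ and to $g = Af$, multiply the two resulting inequalities, and then invoke Theorem~\ref{thm:k-Hadamard-primary}. This yields
\[
    \norm f_1 \norm{Af}_1 \geq \frac{\norm f_q \norm{Af}_q}{\left(\norm f_\infty \norm{Af}_\infty\right)^{(q-1)/q}} \cdot \left(\norm f_\infty \norm{Af}_\infty\right)^{(q-1)/q} \cdot k^{?},
\]
or more cleanly: from the primary principle, $\norm f_\infty \norm{Af}_\infty \leq \frac 1k \norm f_1 \norm{Af}_1$, and from the universal bound, $\norm f_q \norm{Af}_q \leq \left(\norm f_1 \norm{Af}_1\right)^{1/q}\left(\norm f_\infty\norm{Af}_\infty\right)^{(q-1)/q} \leq \left(\norm f_1 \norm{Af}_1\right)^{1/q}\left(\tfrac 1k \norm f_1\norm{Af}_1\right)^{(q-1)/q} = k^{-(q-1)/q}\norm f_1\norm{Af}_1$, which is exactly $\norm f_1 \norm{Af}_1 \geq k^{1-1/q}\norm f_q \norm{Af}_q$.

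I do not anticipate any genuine obstacle here: this theorem is a routine transcription of Theorem~\ref{thm:hadamard-q-norm} to the continuous setting, and the only points requiring (minimal) care are bookkeeping ones---confirming that the doubly-$L^1$ hypothesis places $f$ and $Af$ in $L^q$ so all norms appearing are finite, and that the pointwise inequality $\ab{g}^q \leq \norm g_\infty^{q-1}\ab g$ holds $\mu$-almost everywhere (which it does, by definition of $\norm\cdot_\infty$). Since the excerpt explicitly says the proof is ``identical to that of Theorem~\ref{thm:infinite-primary-up}'' for the primary version and the present statement is the $q$-norm refinement, the write-up can simply say: the argument is identical to that of Theorem~\ref{thm:hadamard-q-norm}, with sums replaced by integrals and Theorem~\ref{thm:hadamard-uncertainty} replaced by Theorem~\ref{thm:k-Hadamard-primary}.
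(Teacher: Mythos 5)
Your proposal is essentially identical to the paper's proof: reduce to the case $q<\infty$, establish the universal bound $\norm g_1/\norm g_q \geq (\norm g_1/\norm g_\infty)^{(q-1)/q}$ via the pointwise inequality $\ab g^q \leq \norm g_\infty^{q-1}\ab g$, and combine with Theorem~\ref{thm:k-Hadamard-primary}. The paper's write-up is the same one-line interpolation argument, and your added remarks about integrability (guaranteed by the doubly-$L^1$ hypothesis and H\"older) are correct bookkeeping observations.
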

\begin{proof}
	The proof follows that of Theorem \ref{thm:hadamard-q-norm}. We may assume that $q<\infty$, since the case of $q=\infty$ is precisely Theorem \ref{thm:k-Hadamard-primary}. It suffices to prove that for any non-zero function $g \in L^1(\R) \cap L^\infty(\R)$,
	\begin{equation}\label{eq:q-norm-bound-function}
		\frac{\norm g_1}{\norm g_q} \geq \left( \frac{\norm g_1}{\norm g_\infty} \right) ^{(q-1)/q},
	\end{equation}
	since we may then apply this bound to $f$ and $\hat f$ and use the primary uncertainty principle, Theorem \ref{thm:k-Hadamard-primary}. To prove (\ref{eq:q-norm-bound-function}), we simply compute
	\[
		\norm g_q^q = \int \ab{g(x)}^q \dd x \leq \norm g_\infty^{q-1} \int \ab{g(x)} \dd x = \norm g_\infty^{q-1} \norm g_1,
	\]
	which implies (\ref{eq:q-norm-bound-function}) after multiplying both sides by $\norm g_1^{q-1}$ and rearranging.
\end{proof}
We already saw in the previous section that the Fourier transform on $\R$ is $1$-Hadamard. As it turns out, the Fourier transform is one instance of a large class of $k$-Hadamard operators (with arbitrary values of $k$) known as \emph{linear canonical transformations (LCT)}, which we define below. These transformations arise in the study of optics, and generalize many other integral transforms on $\R$, such as the fractional Fourier and Gauss--Weierstrass transformations. Although their analytic properties are somewhat more complicated than those of the Fourier transform, our framework treats them equally, since the only property we will need of them is that they are $k$-Hadamard. For more information on LCT, see \cite[Chapters 9--10]{Wolf} or \cite{LCT}. 

We now define the LCT, following \cite{BaAl}. This is a family of integral transforms, indexed by the elements of $\slr$. Specifically, given a matrix $M=\smat{a&b\\c&d} \in \slr$ with $b \neq 0$, we can define the LCT $L_M$ associated to $M$ to be 
\[
	(L_M f)(\xi) = \frac{e^{-i\pi \sgn(b)/4}}{\sqrt{\ab b}} \int f(x) e^{i \pi (d \xi^2 - 2 x \xi +a x^2)/b} \dd x.
\]
One can also take the limit $b \to 0$ and obtain a consistent definition of $L_M$ for all $M \in \slr$. It turns out that this definition yields an infinite-dimensional representation of $\slr$; in particular, one sees that the inverse transform $L_M\inv$ is given by $L_{M\inv}=(L_M)^*$. From the definition, we see that if $b \neq 0$,
\[
	\ab{(L_Mf)(\xi)} = \frac{1}{\sqrt{\ab b}} \bab{\int f(x) e^{i \pi (d \xi^2 - 2 x \xi +a x^2)/b} \dd x} \leq \frac{1}{\sqrt{\ab b}} \int \ab{f(x)} \dd x = \frac{\norm f_1}{\sqrt{\ab b}},
\]
so $\norm{L_M}_{1 \to \infty} \leq 1/\sqrt{\ab b}$. This implies the following result.
\begin{thm}
	Let $M=\smat{a&b\\c&d} \in \slr$ be a matrix with $b \neq 0$. Let $A=\sqrt{\ab b}L_M$ be a rescaling of the LCT $L_M$. Then $A$ is $\ab b$-Hadamard.
\end{thm}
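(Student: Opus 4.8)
The plan is to verify the two defining properties of a $k$-Hadamard operator directly, using the facts about $L_M$ already recorded just above the statement. For the norm bound, we have just seen that $\norm{L_M}_{1 \to \infty} \leq 1/\sqrt{\ab b}$ whenever $b \neq 0$. Since $A = \sqrt{\ab b}\,L_M$, this immediately gives $\norm A_{1\to\infty} = \sqrt{\ab b}\,\norm{L_M}_{1\to\infty} \leq 1$, which is the first requirement in the definition of a $k$-Hadamard operator.

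For the second requirement, I would exploit the representation-theoretic structure of the LCT recorded in the excerpt: $M \mapsto L_M$ is a representation of $\slr$, and $(L_M)^* = L_M\inv = L_{M\inv}$. Composing these, on any function for which the relevant transforms are defined (in particular on every doubly $L^1$ function) we get $(L_M)^* L_M = L_{M\inv} L_M = L_{M\inv M} = L_I = I$, the identity operator. Hence
\[
	A^* A = \sqrt{\ab b}\,(L_M)^* \cdot \sqrt{\ab b}\,L_M = \ab b\,(L_M)^* L_M = \ab b\, I .
\]
In particular $\norm{A^* A f}_\infty = \ab b\,\norm f_\infty \geq \ab b\,\norm f_\infty$ for every $f$, so the second requirement holds with $k = \ab b$. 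This is precisely the ``simplest way'' to ensure the $k$-Hadamard condition that was noted after the definition of $k$-Hadamard matrices, namely having $A^* A = kI$. Combining the two points, $A$ is $\ab b$-Hadamard, as claimed.

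There is essentially no hard step here; the statement is a direct bookkeeping consequence of the two properties of $L_M$ stated in the excerpt. The only point requiring minor care is that the composition law $L_{M'} L_M = L_{M'M}$ and the identity $L_I = I$ are being invoked on the nose (without an anomalous scalar), which is exactly why the phase factor $e^{-i\pi\sgn(b)/4}$ appears in the normalization of $L_M$; if one wished to sidestep any concern about a projective/metaplectic phase, one could instead argue directly that $L_M = \frac{1}{\sqrt{\ab b}}A$ is an $L^2$-isometry with $(L_M)^*$ as its inverse, which again yields $A^* A = \ab b\, I$.
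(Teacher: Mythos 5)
Your proof is correct and follows essentially the same route as the paper: the $\norm{A}_{1\to\infty}\leq 1$ bound comes directly from the $1\to\infty$ bound on $L_M$ computed just before the theorem, and the second condition follows by composing with $A^*=\sqrt{\ab b}\,L_{M\inv}$ and using $(L_M)^*L_M = L_{M\inv}L_M = I$ to get $A^*A = \ab b\, I$. Your parenthetical remark about the metaplectic phase is a thoughtful extra caution, but the paper simply takes $L_{M\inv}=(L_M)^*$ and $L_{M\inv}L_M=I$ as given, which is all that is needed.
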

\begin{proof}
	By the above, we see that $\norm{A}_{1 \to \infty} =\sqrt{\ab b} \norm{L_M}_{1 \to \infty} \leq 1$. Similarly, if we set $B=A^*=\sqrt{\ab b} L_{M\inv}$, then $\norm B_{1 \to \infty} \leq 1$ and $BAf = \ab b L_{M\inv} L_Mf=\ab b f$ for any doubly $L^1$ function $f$.
\end{proof}
By combining the primary uncertainty principle for $k$-Hadamard operators with the argument of Theorem \ref{thm:infinite-ds}, we obtain the following generalization of the Matolcsi--Sz\H ucs (or Donoho--Stark) uncertainty principle for the LCT, or indeed for any $k$-Hadamard operator.
\begin{cor}
	If $M=\smat{a&b\\c&d} \in \slr$ and $f:\R \to \C$ is doubly $L^1$ and non-zero, then
	\[
		\lambda(\supp(f)) \lambda(\supp(L_M f)) \geq \ab b,
	\]
	where $\lambda$ denotes Lebesgue measure. 
\end{cor}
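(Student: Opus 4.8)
The plan is to follow verbatim the two-step template used to derive Theorem~\ref{thm:infinite-ds} from Theorem~\ref{thm:infinite-primary-up}, now feeding in the $k$-Hadamard operator $A \coloneqq \sqrt{\ab b}\,L_M$ (so $k = \ab b$) supplied by the theorem immediately preceding this corollary. If $b = 0$ the right-hand side is $0$ and there is nothing to prove, so I would assume $b \neq 0$ throughout.

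First I would record the universal ``localization versus norm-ratio'' bound: for any non-zero $g \in L^1(\R)\cap L^\infty(\R)$ one has $\norm g_1 = \int_{\supp(g)} \ab{g}\dd x \le \norm g_\infty\,\lambda(\supp(g))$, hence $\lambda(\supp(g)) \ge \norm g_1/\norm g_\infty$; this is exactly the computation carried out inside the proof of Theorem~\ref{thm:infinite-ds}, specialized to the measure space $(\R,\lambda)$. To legitimately apply it to both $f$ and $Af$ I must verify that all four norms involved are finite. Since $f$ is doubly $L^1$ for $L_M$ (equivalently for $A$) we have $f, Af \in L^1(\R)$; then $\norm A_{1\to\infty}\le 1$ forces $Af \in L^\infty(\R)$, and the $k$-Hadamard identity $A^*Af = \ab b\, f$ together with $\norm{A^*}_{1\to\infty}\le 1$ gives $f = \ab{b}^{-1}A^*(Af) \in L^\infty(\R)$ as well. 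So $f, Af \in L^1\cap L^\infty$ and the ratios are genuine.

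Finally, since $A$ differs from $L_M$ by the nonzero scalar $\sqrt{\ab b}$, we have $\supp(Af) = \supp(L_M f)$, so applying the localization bound to $f$ and to $Af$ and multiplying the two inequalities gives
\[
    \lambda(\supp(f))\,\lambda(\supp(L_M f)) \ge \frac{\norm f_1}{\norm f_\infty}\cdot\frac{\norm{Af}_1}{\norm{Af}_\infty} \ge \ab b,
\]
where the last step is the primary uncertainty principle for $k$-Hadamard operators (Theorem~\ref{thm:k-Hadamard-primary}) applied to $A$. I expect no genuine obstacle here: the statement is a direct instance of the paper's framework, and the only points deserving a sentence of care are the harmless reduction to $b\neq 0$ and the verification that $f$ and $Af$ lie in $L^\infty$, so that the support bound is not vacuous. (The same argument, with $A$ an arbitrary infinite-dimensional $k$-Hadamard operator, yields the analogous conclusion $\lambda(\supp(f))\,\lambda(\supp(Af)) \ge k$ mentioned in the lead-in.)
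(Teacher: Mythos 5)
Your proof is correct and follows exactly the paper's argument: apply the primary uncertainty principle (Theorem~\ref{thm:k-Hadamard-primary}) to the $\ab b$-Hadamard operator $\sqrt{\ab b}\,L_M$, then lower-bound each factor $\lambda(\supp(\cdot))$ by the ratio $\norm\cdot_1/\norm\cdot_\infty$ as in the proof of Theorem~\ref{thm:infinite-ds}. The added remarks (the trivial case $b=0$, the scale-invariance of support and norm-ratio, and the verification that $f,Af\in L^1\cap L^\infty$) are sound housekeeping that the paper leaves implicit.
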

\begin{proof}
	From the primary uncertainty principle, Theorem \ref{thm:k-Hadamard-primary}, we find that
	\[
		\frac{\norm f_1}{\norm f_\infty} \cdot \frac{\norm{L_M f}_1}{\norm{L_M f}_\infty} \geq \ab b.
	\]
	In proving Theorem \ref{thm:infinite-ds}, we saw that $\frac{\norm g_1}{\norm g_\infty} \leq \lambda(\supp(g))$ for all $g$, which yields the claim.
\end{proof}
We believe that this fact was not previously observed for the LCT. Of course, one expects that in general a much stronger result should hold, namely that $\lambda(\supp(f)) \lambda(\supp(L_M f)) =\infty$ whenever $b \neq 0$; this would generalize the result of Benedicks \cite{Benedicks} from the Fourier transform to all LCT. However, we are not able to obtain such a result with our approach, for the same reason that we cannot reprove Benedicks's theorem.

\subsection{The Heisenberg uncertainty principle}\label{subsec:hup}
In this section, we prove (with a somewhat worse constant) the well-known Heisenberg uncertainty principle, as well as some extensions of it. Again, as in all previous proofs we have seen, we use the elementary two-step process explained in the Introduction. 
Our proof differs drastically from the classical ones, which use analytic techniques (integration by parts) and special properties of the Fourier transform (that it turns differentiation into multiplication by $x$). Indeed it is not clear if these classical techniques can be used to prove our generalizations.

For a doubly $L^1$ function $f$, we define the \emph{variance} of $f$ to be
\[
	V(f) = \int x^2 \ab{f(x)}^2 \dd x.
\]
If $\norm f_2 = 1$, then we may think of $\ab f^2$ as a probability distribution, in which case $V$ really does measure the variance of this distribution (assuming, without loss of generality\footnote{If its mean is at some point $a$, we can simply replace $f(x)$ by $f(x-a)$ to make $V$ be the actual variance of the distribution.}, that its mean is $0$). This interpretation is natural from the perspective of quantum mechanics (whence the original motivation for studying uncertainty principles): in quantum mechanics, we would think of $f$ as a wave function, and then $\ab f^2$ would define the probability distribution for measuring some quantity associated to the wave function, such as a particle's\footnote{Because of this interpretation, it is natural to have $f$ be a function defined on $\R^n$, to model a particle moving in $n$-dimensional space. For the moment we focus on the case $n=1$, though we discuss the multidimensional analogue in Section \ref{sec:open-problems}.} position or momentum. Heisenberg's\footnote{Though the physical justification for the uncertainty principle is due to Heisenberg \cite{Heisenberg}, the proof of the mathematical fact is due to Kennard \cite{Kennard} and Weyl \cite{Weyl}.} 
uncertainty principle asserts that $V(f)$ and $V(\hat f)$ cannot both be small.
\begin{thm}[Heisenberg's uncertainty principle \cite{Heisenberg,Kennard,Weyl}]\label{thm:original-heisenberg}
	There exists a constant $C>0$ 
	such that for any doubly $L^1$ function $f \neq 0$,
	\[
		V(f) V(\hat f) \geq C \norm f_2^2 \norm{\hat f}_2^2.
	\]
\end{thm}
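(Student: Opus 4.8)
The plan is to run the two-step template of the Introduction with the variance $V$ playing the role of the ``measure of localization'' $H$. We may assume $f \neq 0$, so that (as $f$ is doubly $L^1$, hence so is $\hat f$, and the Fourier transform is injective) both $f$ and $\hat f$ are non-zero and all of $\norm f_1,\norm f_\infty,\norm{\hat f}_1,\norm{\hat f}_\infty$ are finite and positive; we may also assume $V(f),V(\hat f)<\infty$, since otherwise there is nothing to prove. The universal half of the argument will be the inequality
\[
    V(g) = \int x^2 \ab{g(x)}^2 \dd x \geq \frac{1}{4\pi^2}\cdot\frac{\norm g_1^4}{\norm g_2^2},
\]
valid for every non-zero $g \in L^1(\R)\cap L^2(\R)$, and the ``uncertainty'' half will be the $L^1\to L^2$ norm uncertainty principle already established.

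To prove the universal inequality I would use a weighted Cauchy--Schwarz estimate followed by an optimization over a scaling parameter. For any $\lambda>0$,
\[
    \norm g_1 = \int \ab{g(x)}(1+\lambda x^2)^{1/2}(1+\lambda x^2)^{-1/2}\dd x \leq \left(\int \ab{g(x)}^2(1+\lambda x^2)\dd x\right)^{\!1/2}\left(\int \frac{\dd x}{1+\lambda x^2}\right)^{\!1/2}.
\]
Since $\int(1+\lambda x^2)^{-1}\dd x = \pi/\sqrt\lambda$ and $\int\ab g^2(1+\lambda x^2)\dd x = \norm g_2^2 + \lambda V(g)$, this yields $\norm g_1^2 \leq \pi\bigl(\norm g_2^2\lambda^{-1/2}+V(g)\lambda^{1/2}\bigr)$; choosing $\lambda = \norm g_2^2/V(g)$ makes the right-hand side equal to $2\pi\norm g_2\sqrt{V(g)}$, and rearranging gives the claim.

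To finish, apply this universal bound to $g=f$ and to $g=\hat f$ (with the frequency variable in the role of $x$) and multiply:
\[
    V(f)\,V(\hat f) \geq \frac{1}{16\pi^4}\cdot\frac{\bigl(\norm f_1\norm{\hat f}_1\bigr)^4}{\norm f_2^2\norm{\hat f}_2^2}.
\]
The Fourier transform on $\R$ is a $1$-Hadamard operator, so the $q=2$ case of the infinitary norm uncertainty principle, Theorem \ref{thm:expanding-q-norm}, gives $\norm f_1\norm{\hat f}_1 \geq \norm f_2\norm{\hat f}_2$ --- precisely the ``$p=1,q=2$'' norm uncertainty principle foreshadowed earlier. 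Substituting this in leaves $V(f)V(\hat f) \geq \frac{1}{16\pi^4}\norm f_2^2\norm{\hat f}_2^2$, which is the theorem with $C = 1/(16\pi^4)$ (by Plancherel one may replace $\norm f_2^2\norm{\hat f}_2^2$ by $\norm f_2^4$).

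The step I expect to be the real content is choosing the correct universal inequality. The most natural first attempt, obtained from a truncation/Chebyshev argument ($\norm g_2^2 \le 2R\norm g_\infty^2 + V(g)/R^2$ followed by optimizing $R$), gives $V(g)\gtrsim \norm g_2^6/\norm g_\infty^4$; but combining two copies of this would require an upper bound of the shape $\norm f_\infty\norm{\hat f}_\infty \lesssim \norm f_2^2$, which is \emph{not} available from the $L^1\to L^\infty$ input our framework is allowed to use --- indeed $\norm g_1\norm g_\infty$ can be arbitrarily large relative to $\norm g_2^2$, so no such $L^\infty$ bound can be extracted this way. The point is that one must instead produce the $L^1$-flavoured estimate $V(g)\gtrsim \norm g_1^4/\norm g_2^2$, which is exactly the shape that closes up against the already-proven $L^1\to L^2$ norm uncertainty principle. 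Once the right universal inequality is identified, the remaining ingredients (the weighted Cauchy--Schwarz and the one-variable optimization) are routine.
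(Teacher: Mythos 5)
Your proof is correct and proceeds through the same two-step framework as the paper, but your ``universal'' half is carried out differently, and more efficiently. Where the paper (in its proof of Theorem \ref{thm:our-heisenberg}, specialized to $q=2$, $k=1$) first chooses a threshold $T$ so that at least half of the $L^1$ mass of $g$ lies outside $[-T,T]$ (via H\"older) and then applies Cauchy--Schwarz with the weight $1/x$ on $\{\ab x>T\}$, you instead use a single weighted Cauchy--Schwarz with weight $(1+\lambda x^2)^{-1/2}$ and optimize over $\lambda$, arriving directly at the Carlson-type inequality $\norm g_1^4 \leq 4\pi^2\,\norm g_2^2\, V(g)$. This is slicker for $q=2$: one inequality and one calculus-of-one-variable optimization replace the paper's two-stage truncation, and the resulting constant $C=1/(16\pi^4)\approx 6.4\times 10^{-4}$ is genuinely better than the paper's $C_2=2^{-12}\approx 2.4\times 10^{-4}$ (though both fall short of the sharp $1/(16\pi^2)$). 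The trade-off is flexibility: the paper's truncation argument is set up so that replacing Cauchy--Schwarz by H\"older handles arbitrary $q$ and arbitrary moments $M_r$ uniformly (Theorem \ref{thm:general-heisenberg}), whereas your weighted Cauchy--Schwarz is tailored to the $L^2$/variance case and would need to be re-engineered for those generalizations. Your closing discussion of why a truncation-against-$L^\infty$ bound of the shape $V(g)\gtrsim \norm g_2^6/\norm g_\infty^4$ cannot close against the $L^1\to L^\infty$ input is accurate and is essentially the content of the paper's Theorem \ref{thm:incomparable-heisenbergs}: the ratio $\norm f_\infty\norm{\hat f}_\infty/\norm f_2^2$ has no uniform upper bound. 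In short: same skeleton, a genuinely different (and for this special case, cleaner and sharper) proof of the key universal inequality.
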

\begin{rem}
	It is in fact known that the optimal constant is $C=1/(16 \pi ^2)$, with equality attained for Gaussians.
\end{rem}
Additionally, versions of the Heisenberg uncertainty principle have been established for the LCT, see \cite{TaZh} for a survey. The most basic such extension is the following, stated without proof as \cite[Exercise~9.10]{Wolf} and first proven in print by Stern \cite{Stern}.
\begin{thm}[LCT uncertainty principle \cite{Stern,Wolf}]
	There exists a constant $C>0$ such that the following holds for all doubly $L^1$ functions $f$. If $M = \smat{a&b\\c&d} \in \slr$ and $L_M$ is the associated LCT, then
	\[
		V(f) V(L_M f) \geq C b^2 \norm f_2^2 \norm{L_M f}_2^2.
	\]
\end{thm}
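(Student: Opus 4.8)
The plan is to run the two-step framework from the introduction: the ``operator'' half is a norm uncertainty principle that we already possess for the rescaled LCT, and the ``function'' half is a universal inequality bounding the variance of a single function below by a ratio of its $L^1$ and $L^2$ norms. Concretely, we may assume $b \neq 0$, since when $b=0$ the claimed bound is trivially true. Recall that $A \coloneqq \sqrt{\ab b}\, L_M$ is $\ab b$-Hadamard. Applying the norm uncertainty principle for $k$-Hadamard operators, Theorem~\ref{thm:expanding-q-norm}, with $k=\ab b$ and $q=2$, and then unwinding the rescaling $Af = \sqrt{\ab b}\,L_M f$, yields
\[
	\norm f_1 \norm{L_M f}_1 \geq \sqrt{\ab b}\,\norm f_2 \norm{L_M f}_2
\]
for every doubly $L^1$ function $f$. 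This is the only place the transform $L_M$ itself enters the argument; everything else is a statement about individual functions.

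The universal inequality I would prove is: for every nonzero $g \in L^1(\R)\cap L^2(\R)$ with $V(g)<\infty$,
\[
	V(g) \geq \frac{\norm g_1^4}{4\pi^2 \norm g_2^2}.
\]
(If $V(g)=\infty$ there is nothing to prove.) The starting point is the weighted Cauchy--Schwarz estimate $\norm g_1 = \int \ab{g(x)}(1+x^2)^{1/2}(1+x^2)^{-1/2}\dd x \leq \big(\norm g_2^2+V(g)\big)^{1/2}\big(\int(1+x^2)\inv\dd x\big)^{1/2} = \sqrt\pi\,\big(\norm g_2^2+V(g)\big)^{1/2}$, i.e.\ $\norm g_1^2 \leq \pi\big(\norm g_2^2 + V(g)\big)$. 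This bound is not scale-invariant and on its own can be vacuous, so I would apply it to the dilates $g_t(x)=g(tx)$, using $\norm{g_t}_1 = t\inv\norm g_1$, $\norm{g_t}_2^2 = t\inv\norm g_2^2$, and $V(g_t)=t^{-3}V(g)$. Clearing denominators gives $V(g) \geq \pi\inv t\norm g_1^2 - t^2\norm g_2^2$ for all $t>0$, and optimizing over $t$ (at $t = \norm g_1^2/(2\pi\norm g_2^2)$) produces the displayed inequality.

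To finish, I would apply the universal inequality to $f$ and to $L_M f$ separately and multiply, obtaining
\[
	V(f)\,V(L_M f) \geq \frac{\big(\norm f_1 \norm{L_M f}_1\big)^4}{16\pi^4\,\norm f_2^2 \norm{L_M f}_2^2}.
\]
Inserting the operator inequality $\norm f_1 \norm{L_M f}_1 \geq \sqrt{\ab b}\,\norm f_2\norm{L_M f}_2$, the $L^2$-norm factors cancel cleanly and one is left with $V(f)V(L_M f) \geq \frac{b^2}{16\pi^4}\,\norm f_2^2\norm{L_M f}_2^2$, so $C = 1/(16\pi^4)$ works; the very same argument with $M=\smat{0&1\\-1&0}$ recovers Theorem~\ref{thm:original-heisenberg}, albeit with a constant worse than the optimal $1/(16\pi^2)$. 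The one genuinely creative step is identifying the right universal inequality and noticing that the naive weighted Cauchy--Schwarz estimate must be sharpened by the dilation optimization in order to become scale-invariant; once that is in place, the bookkeeping with the rescaled LCT and the cancellation of norms is routine. I do not anticipate analytic difficulties beyond checking that $f,L_M f \in L^1\cap L^\infty \subseteq L^2$ (which follows from $f$ being doubly $L^1$), so that every norm appearing is finite, and the harmless reduction to the case where $V(f)$ and $V(L_M f)$ are both finite.
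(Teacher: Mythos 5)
Your proof is correct, and it follows the paper's two-step framework: combine the norm uncertainty principle for the rescaled $\ab b$-Hadamard operator $\sqrt{\ab b}\,L_M$ with a universal per-function inequality bounding $V(g)$ from below. Where you genuinely depart from the paper is in the universal inequality and its proof. The paper (in the proof of Theorem~\ref{thm:our-heisenberg}, from which the LCT principle follows at $q=2$) chooses a threshold $T$ so that at most half the $L^1$ mass of $g$ lies in $[-T,T]$ (via H\"older), and then lower-bounds $V(g)$ by Cauchy--Schwarz on the tail $\ab x > T$; this gives $V(g) \geq \norm g_1^4/(64\norm g_2^2)$. You instead prove $V(g) \geq \norm g_1^4/(4\pi^2\norm g_2^2)$ via a global weighted Cauchy--Schwarz with weight $(1+x^2)^{1/2}$ followed by optimization over dilations $g_t(x)=g(tx)$. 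Your inequality is sharper (since $4\pi^2 < 64$), and in fact your dilation step is the well-known scaling symmetrization that underlies the sharp form of Heisenberg's inequality; as a result your final constant $1/(16\pi^4)\approx 6.4\times 10^{-4}$ beats the paper's $2^{-12}\approx 2.4\times 10^{-4}$, though both fall short of the optimal $1/(16\pi^2)$. The trade-off is generality: the paper's threshold argument extends uniformly to all $q\in(1,\infty]$ (Theorem~\ref{thm:our-heisenberg}) and to higher moments $M_r$ (Theorem~\ref{thm:general-heisenberg}), whereas the $(1+x^2)^{1/2}$ weight is tailored to the $q=2$, $r=2$ case and does not obviously propagate to those generalizations. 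All the subsidiary checks in your writeup---the dilation bookkeeping, the reduction to $b\neq 0$, and the observation that doubly $L^1$ implies $f,L_Mf\in L^1\cap L^\infty\subseteq L^2$ so every norm is finite---are correct.
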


\subsubsection{A Heisenberg uncertainty principle for other norms}
We begin by proving the following generalization of Heisenberg's uncertainty principle.  It lets us bound $V(f) V(A f)$ by any $L^q$ norm of $f$ and $A f$, for any $k$-Hadamard operator $A$ (recovering, for $q=2$, the classical results of the previous subsection\footnote{Note e.g.\ that one recovers the correct dependence on $b$ when deducing the LCT uncertainty principle above from this one.}). As far as we know, this result is new for $q \neq 2$, even for the Fourier transform. As we show below, the statements for different $q$ are in general of incomparable strength.
\begin{thm}[Heisenberg uncertainty principle for arbitrary norms]\label{thm:our-heisenberg}
	For any $k$-Hadamard operator $A$, any doubly $L^1$ function $f$, and any $1 < q \leq \infty$,
	\[
		V(f) V(A f) \geq C_q k^{3-2/q} \norm f_q^2 \norm {\hat f}_q^2,
	\]
	where $C_q=2^{-\frac{10q-8}{q-1}}$ depends only on $q$. In particular, $V(f) V(\hat f) \geq C_q \norm f_q^2 \norm{\hat f}_q^2$.
\end{thm}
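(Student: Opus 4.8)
I would follow the two-step template of the Introduction. The ``first step'' is already in hand: Theorem~\ref{thm:expanding-q-norm} gives, for any $k$-Hadamard operator $A$ and any doubly $L^1$ function $f$, the norm uncertainty principle
\[
	\norm f_1\norm{Af}_1 \ \ge\ k^{1-1/q}\,\norm f_q\norm{Af}_q \ =\ k^{(q-1)/q}\,\norm f_q\norm{Af}_q .
\]
So the real content is the ``second step'': a universal inequality, valid for \emph{every} function $g$ that is doubly $L^1$ (or more generally $g\in L^1(\R)\cap L^q(\R)$; if $V(g)=\infty$ there is nothing to prove), of the shape
\[
	V(g) \ \ge\ c_q\,\frac{\norm g_1^{(3q-2)/(q-1)}}{\norm g_q^{q/(q-1)}},
\]
where $c_q>0$ depends only on $q$. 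Note that the exponents here are not a free choice: under the dilation $g(x)\mapsto g(\lambda x)$ one has $V\sim\lambda^{-3}$, $\norm\cdot_1\sim\lambda^{-1}$, $\norm\cdot_q\sim\lambda^{-1/q}$, and both sides of the displayed inequality scale like $\lambda^{-3}$, so this is the only possible form.

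\textbf{Proving the universal bound.} I would fix a radius $R>0$ and split $\norm g_1 = \int_{|x|\le R}\ab{g} + \int_{|x|>R}\ab{g}$. On the core, Hölder's inequality gives $\int_{|x|\le R}\ab g \le (2R)^{1-1/q}\norm g_q$ (reading $1-1/q$ as $1$ when $q=\infty$). On the tail, writing $\ab g = \tfrac1{\ab x}\cdot\ab x\,\ab g$ and applying Cauchy--Schwarz gives
\[
	\int_{|x|>R}\ab g \ \le\ \Big(\int_{|x|>R}\frac{\dd x}{x^2}\Big)^{1/2}\Big(\int x^2\ab{g}^2\dd x\Big)^{1/2} \ =\ \sqrt{2/R}\,\sqrt{V(g)} .
\]
Now choose $R$ so the core term equals $\tfrac12\norm g_1$, i.e.\ $2R = \big(\norm g_1/(2\norm g_q)\big)^{q/(q-1)}$; then the tail term is at least $\tfrac12\norm g_1$, so $\sqrt{2/R}\,\sqrt{V(g)}\ge\tfrac12\norm g_1$, which rearranges to the displayed bound with $c_q = 2^{-4-q/(q-1)}$. (The hypothesis $q>1$ enters precisely here, so that $q/(q-1)<\infty$ and the Hölder step is non-trivial; the $q=\infty$ endpoint is the degenerate case $R=\norm g_1/(4\norm g_\infty)$.)

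\textbf{Combining the steps.} Apply the universal bound to $f$ and to $Af$ and multiply:
\[
	V(f)V(Af) \ \ge\ c_q^2\,\frac{(\norm f_1\norm{Af}_1)^{(3q-2)/(q-1)}}{(\norm f_q\norm{Af}_q)^{q/(q-1)}} .
\]
Substituting $\norm f_1\norm{Af}_1 \ge k^{(q-1)/q}\norm f_q\norm{Af}_q$ from Theorem~\ref{thm:expanding-q-norm} turns the numerator into $k^{(3q-2)/q}(\norm f_q\norm{Af}_q)^{(3q-2)/(q-1)}$, and the net power of $\norm f_q\norm{Af}_q$ collapses to $\frac{3q-2}{q-1}-\frac{q}{q-1}=2$, while the power of $k$ is $\frac{3q-2}{q}=3-2/q$. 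This gives $V(f)V(Af)\ge c_q^2\,k^{3-2/q}\norm f_q^2\norm{Af}_q^2$, and one checks $c_q^2 = 2^{-8-2q/(q-1)} = 2^{-(10q-8)/(q-1)}=C_q$. The ``in particular'' statement is the special case in which $A$ is the Fourier transform, which is $1$-Hadamard, so $k=1$ and $k^{3-2/q}=1$.

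\textbf{Main obstacle.} Everything after the universal variance bound is bookkeeping of exponents (pinned down in advance by the scaling heuristic), and the first step is quoted. So the only place requiring an actual idea is the universal bound, and even there the idea is modest: split the $L^1$ mass at a radius, control the near part by $\norm g_q$ via Hölder and the far part by $\sqrt{V(g)}$ via Cauchy--Schwarz, and optimize the radius. The points to be careful about are the $q=\infty$ endpoint, the necessity of $q>1$, and checking that doubly $L^1$ functions (being in $L^1\cap L^\infty$) indeed lie in $L^1\cap L^q$ so the bound applies.
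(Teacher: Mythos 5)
Your proposal is correct and is essentially the paper's own proof: the same split of $\|g\|_1$ at the radius $T=R=\tfrac12(\|g\|_1/(2\|g\|_q))^{q/(q-1)}$, the same use of H\"older on the core and Cauchy--Schwarz against $V(g)$ on the tail, and the same exponent bookkeeping when combining with Theorem~\ref{thm:expanding-q-norm}; even the constant works out identically to $C_q=2^{-(10q-8)/(q-1)}$. The dilation-scaling heuristic you give to pin down the exponents is a nice piece of motivation not made explicit in the paper, but the substance of the argument is the same.
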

\begin{rem}
	No attempt was made to optimize the constant $C_q$. However, our proof is unlikely to give the optimal constant even after optimization; for instance, in the case $q=2$, it is known that the optimal constant for the Fourier transform is $C_2=1/(16\pi^2) \approx 6.3 \times 10^{-3}$, whereas our proof gives the somewhat worse constant $2^{-12} \approx 2.4 \times 10^{-4}$.
\end{rem}
As with our other proofs, the proof of this result proceeds in two stages. The first, already done in Theorem \ref{thm:expanding-q-norm},
is establishing the ``norm uncertainty principle'' $\norm f_1 \norm{A f}_1 \geq \norm f_q \norm{A f}_q$. After this, all that remains is to lower-bound $V(g)$ as a function of $\norm g_1$ and $\norm g_q$ for an arbitrary function $g$. Combining these two bounds will yield the result.

However, an important new ingredient which we did not use in the finite-dimensional setting is a different way to upper-bound $\norm g_1$. The idea is to choose a constant $T$, depending on $g$ and the target norm $q$, so that most of the the $L^1$-mass of $g$ is outside the interval $[-T,T]$. 
This will allow us to lower bound the variance through a simple use of H\"older's inequality.  Note that in the proof and subsequently, we use the usual conventions of manipulating $q$ as though it is finite, though everything works identically for $q=\infty$ by taking a limit, or by treating expressions like $\infty/(\infty-1)$ as equal to $1$.

\begin{proof}[Proof of Theorem \ref{thm:our-heisenberg}]
	We may assume that $f \neq 0$. Following our general framework, we claim that the bound
	\begin{equation}\label{eq:variance-bound}
		\frac{\norm g_1}{\norm g_q} \leq  \left( 2^{\frac{5q-4}{q-1}} \frac{V(g)}{\norm g_q^2} \right) ^{\frac{q-1}{3q-2}}
	\end{equation}
	holds for any non-zero function $g \in L^1(\R) \cap L^\infty(\R)$. Observe that this bound is homogeneous, in that it is unchanged if we replace $g$ by $cg$ for some constant $c$. Once we have this bound, we can apply it to the norm uncertainty principle, Theorem \ref{thm:expanding-q-norm}, which says that
	\[
		\frac{\norm f_1}{\norm f_q} \cdot \frac{\norm {Af}_1}{\norm{Af}_q} \geq k^{1-1/q}.
	\]
	Plugging in (\ref{eq:variance-bound}) for $g=f$ and $g=Af$, we find that
	\[
		\left( 2^{\frac{10q-8}{q-1}} \frac{V(f)}{\norm f_q^2}\frac{V(Af)}{\norm {Af}_q^2} \right) ^{\frac{q-1}{3q-2}} \geq k^{\frac{q-1}{q}},
	\]
	and rearranging gives the desired conclusion. So it suffices to prove (\ref{eq:variance-bound}).

	Let $T=\frac 12 (\norm g_1 /(2 \norm g_q))^{q/(q-1)}$, so that $(2T)^{1-1/q} \norm g_q = \frac 12 \norm g_1$. By H\"older's inequality, we have that
	\[
		\int_{-T}^T \ab{g(x)} \dd x = \int \bm 1_{[-T,T]}(x) \ab{g(x)} \dd x \leq \norm{\bm 1_{[-T,T]}}_{q/(q-1)} \norm g_q = (2T)^{1-1/q} \norm g_q = \frac 12 \norm g_1,
	\]
	where the last step follows from the definition of $T$. This implies that the interval $[-T,T]$ contains at most half of the $L^1$ mass of $g$, so $\frac 12 \norm g_1 \leq \int_{\ab x>T} \ab{g(x)} \dd x$. Applying the Cauchy--Schwarz inequality to this bound, we find that
	\begin{align*}
		\frac 12 \norm g_1 & \leq \int_{\ab x >T} \ab{g(x)} \dd x\\
		&= \int_{\ab x>T} \frac 1x (x \ab{g(x)}) \dd x\\
		&\leq \left( \int_{\ab x>T} \frac{1}{x^2} \dd x \right) ^{1/2} \left( \int_{\ab x>T} x^2 \ab{g(x)}^2 \dd x \right) ^{1/2}\\
		&\leq \left( \frac{2}{T} \right) ^{1/2} V(g)^{1/2}\\
		&= 2\left( \frac{2 \norm g_q}{\norm g_1} \right) ^{q/(2(q-1))} V(g)^{1/2}.
	\end{align*}
	Rearranging this inequality yields (\ref{eq:variance-bound}).
\end{proof}
Theorem \ref{thm:our-heisenberg} contains within it infinitely many ``Heisenberg-like'' uncertainty principles, one for each $q \in (1,\infty]$ (and one for each $k$-Hadamard operator $A$). It is natural to wonder whether these are really all different, or whether one of them implies all the other ones. As a first step towards answering this question in the case of the Fourier transform, we can show that the $q=2$ and $q=\infty$ cases are incomparable, in the sense that there exist functions for which one is arbitrarily stronger than the other. More precisely, we have the following result. We use $\sch(\R)$ to denote the Schwartz class of rapidly decaying smooth functions.
\begin{thm}\label{thm:incomparable-heisenbergs}
	Define a function $F:\sch(\R) \setminus \{0\} \to \R_{>0}$ by
	\[
		F(f) = \frac{\norm f_\infty \norm{\hat f}_\infty}{\norm f_2 \norm{\hat f}_2}= \frac{\norm f_\infty \norm{\hat f}_\infty}{\norm f_2^2}.
	\]
	Then the image of $F$ is all of $\R_{>0}$.
\end{thm}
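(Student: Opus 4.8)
The plan is to show that the image of $F$ is a connected subset of $\R_{>0}$ (an interval) which has infimum $0$ and is unbounded above; the only such interval is $\R_{>0}$ itself. First I would record the soft facts. For $f \in \sch(\R)\setminus\{0\}$ all of $\norm f_\infty,\norm{\wh f}_\infty,\norm f_2$ are finite and strictly positive, so $F$ is well defined with values in $\R_{>0}$, and by Plancherel and $\wh{\wh f}(x)=f(-x)$ one has $F(\wh f)=F(f)$. Moreover each of $f\mapsto\norm f_\infty$, $f\mapsto\norm{\wh f}_\infty$ (which is $\leq\norm f_1$, hence controlled by a Schwartz seminorm), and $f\mapsto\norm f_2$ is continuous on $\sch(\R)$ with its usual topology, so $F$ is continuous on $\sch(\R)\setminus\{0\}$. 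Since $\sch(\R)\setminus\{0\}$ is path-connected (a topological vector space remains path-connected after deleting one point, in infinite dimensions), $\mathrm{Im}(F)$ is an interval. It therefore suffices to exhibit functions with $F$ arbitrarily large and functions with $F$ arbitrarily small.

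\textbf{Arbitrarily large values.} For this I would use the explicit family $q_\lambda(x)=e^{-\pi\lambda^2 x^2}+\lambda^{-1}e^{-\pi x^2/\lambda^2}$, $\lambda\geq 1$. Using $\wh{e^{-\pi t x^2}}(\xi)=t^{-1/2}e^{-\pi\xi^2/t}$ one computes $\wh{q_\lambda}(\xi)=\lambda^{-1}e^{-\pi\xi^2/\lambda^2}+e^{-\pi\lambda^2\xi^2}$. Both $q_\lambda$ and $\wh{q_\lambda}$ are positive combinations of centred Gaussians, hence maximised at the origin, so $\norm{q_\lambda}_\infty=\norm{\wh{q_\lambda}}_\infty=1+\lambda^{-1}$, while a direct Gaussian integral gives $\norm{q_\lambda}_2^2=\sqrt2\,\lambda^{-1}+2(\lambda^4+1)^{-1/2}$. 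Hence $F(q_\lambda)=(1+\lambda^{-1})^2\big/\big(\sqrt2\,\lambda^{-1}+2(\lambda^4+1)^{-1/2}\big)\sim \lambda/\sqrt2\to\infty$.

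\textbf{Arbitrarily small values.} Here I would spread mass over the time--frequency plane via a Gabor sum: with $\phi(x)=e^{-\pi x^2}$ set $p_N(x)=N^{-1/2}\sum_{j=1}^N e^{2\pi i Njx}\phi(x-Nj)$, which is Schwartz. Its $N$ summands have pairwise time-separation $\geq N$ and (after Fourier transform, since $\wh\phi=\phi$) pairwise frequency-separation $\geq N$. Using the rapid decay of $\phi$, one checks: (i) for each $x$ at most one summand of $p_N$ is non-negligible, whence $\norm{p_N}_\infty\leq N^{-1/2}(\norm\phi_\infty+o(1))$; (ii) the same applied to the analogous expansion $\wh{p_N}(\xi)=N^{-1/2}\sum_j e^{-2\pi i Nj(\xi-Nj)}\phi(\xi-Nj)$ gives $\norm{\wh{p_N}}_\infty\leq N^{-1/2}(\norm\phi_\infty+o(1))$; and (iii) the summands are nearly orthogonal (their inner products are $O(e^{-\pi N^2(j-j')^2/2})$), so $\norm{p_N}_2^2=\norm\phi_2^2+o(1)$. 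Therefore $F(p_N)=O(1/N)\to 0$.

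\textbf{Conclusion and main obstacle.} Combining the three paragraphs: $\mathrm{Im}(F)$ is an interval in $\R_{>0}$ containing values tending to $0$ and values tending to $\infty$, so $\mathrm{Im}(F)=\R_{>0}$. The only genuinely computational part is the set of estimates (i)--(iii) for $p_N$, i.e.\ bounding sums of translated/modulated Gaussians by their Gaussian decay; this is routine but is where care is needed. The large-$F$ family is a closed-form computation, and the continuity/connectedness reduction is soft.
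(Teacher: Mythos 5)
Your proof is correct, and it differs from the paper's in one substantive way. For the large-$F$ regime you use the family $q_\lambda(x)=e^{-\pi\lambda^2x^2}+\lambda^{-1}e^{-\pi x^2/\lambda^2}$, which (after multiplying by $\sqrt\lambda$, which leaves $F$ unchanged) is exactly the paper's self-dual family $g_c(x)=c^{-1/2}e^{-\pi(x/c)^2}+c^{1/2}e^{-\pi(cx)^2}$ with $c=\lambda$; so this part is the same construction. For the small-$F$ regime, however, you use a Gabor sum $p_N$ of $N$ time--frequency shifted Gaussians, spread out in both domains, whereas the paper uses the complex-Gaussian (chirp) family $f_{a,b}(x)=e^{-\pi((a+bi)x)^2}$ with $a^2-b^2=1$ and $a\to\infty$. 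Both achieve $F\to 0$, but the chirp family is a closed-form one-parameter deformation whose norms and Fourier transform are computed exactly, while your $p_N$ requires the near-orthogonality and ``at most one non-negligible summand'' estimates, which are routine but need to be carried out. The other structural difference is that you interpose a soft reduction: you observe that $\sch(\R)\setminus\{0\}$ is path-connected and that $F$ is continuous (each of $\norm\cdot_\infty$, $f\mapsto\norm{\hat f}_\infty\le\norm f_1$, and $\norm\cdot_2$ is bounded by Schwartz seminorms), so $\mathrm{Im}(F)$ is an interval and it suffices to show it has infimum $0$ and is unbounded. The paper instead computes the exact ranges of its two families, $(0,\sqrt 2)$ and $[\sqrt2,\infty)$, and checks that they tile $\R_{>0}$. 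Your connectedness reduction is a genuine simplification --- it removes the need to match endpoints --- at the modest cost of the continuity/path-connectedness bookkeeping.

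One small caution on the path-connectedness claim: for $f,g\in\sch\setminus\{0\}$ the straight segment $t\mapsto(1-t)f+tg$ does pass through $0$ precisely when $g$ is a negative scalar multiple of $f$, so a two-segment path through a third, linearly independent function is needed in that case. Since $\sch(\R)$ is infinite-dimensional this is always available, so the conclusion stands, but the parenthetical ``a topological vector space remains path-connected after deleting one point'' should really say ``of dimension at least two.''
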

\noindent We defer the proof of Theorem \ref{thm:incomparable-heisenbergs} to Appendix \ref{sec:appendix}. 

Recall that the $q=2$ case of Theorem \ref{thm:our-heisenberg} (which is just the classical Heisenberg uncertainty principle) says that $V(f) V(\hat f) \geq C \norm f_2^2 \norm{\hat f}_2^2$, whereas the $q=\infty$ case of Theorem \ref{thm:our-heisenberg} says that $V(f) V(\hat f) \geq C' \norm f_\infty^2 \norm{\hat f}_\infty^2$, for appropriate constants $C,C'>0$. Thus, Theorem \ref{thm:incomparable-heisenbergs} says that these two results are in general incomparable: there exist functions for which the $q=2$ gives an arbitrarily stronger lower bound on $V(f) V(\hat f)$, while there exist other functions for which the $q=\infty$ case gives an arbitrarily stronger bound. In fact, we expect that in general, the uncertainty principles for any $q\neq 2$ should be incomparable to the Heisenberg uncertainty principle, namely the case where $q=2$. We are unable to prove this, and therefore leave it as a conjecture. 
\begin{conj}
	Let $2 \neq q \in (1,\infty]$, and define a function $F_{q}:\sch(\R) \setminus \{0\} \to \R_{>0}$ by
	\[
		F_q(f)=\frac{\norm f_{q} \norm{\hat f}_{q}}{\norm f_2 \norm{\hat f}_2}=\frac{\norm f_{q} \norm{\hat f}_{q}}{\norm f_2^2}.
	\]
	Then the image of $F_q$ is all of $\R_{>0}$. 
\end{conj}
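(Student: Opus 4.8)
\medskip
\noindent\emph{Approach to the conjecture.}
The plan is to reuse the structure of the proof of Theorem~\ref{thm:incomparable-heisenbergs} (the case $q=\infty$): a soft connectedness step showing the image is an interval, followed by two explicit families of Schwartz functions that drive $F_q$ to each endpoint. For the soft step, $\sch(\R)$ is an infinite-dimensional topological vector space, so $\sch(\R)\setminus\{0\}$ is path-connected, and $F_q$ is continuous on it: $f\mapsto\hat f$ is continuous $\sch(\R)\to\sch(\R)$, each $L^q$-norm is dominated on $\sch(\R)$ by the continuous seminorm $f\mapsto\norm f_\infty+\norm{x^2f}_\infty$ (split the defining integral at $|x|=1$), and $\norm\cdot_2$ is a continuous norm that does not vanish off the origin. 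Hence the image of $F_q$ is an interval $I\subseteq\R_{>0}$, and everything reduces to computing $\sup I$ and $\inf I$.

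\emph{The supremum is $+\infty$ for every $q\neq 2$.} I would use a ``spike-plus-plateau'' family $f_{H,W}=g_H+h_W$, where $g_H\in\sch(\R)$ is a smooth bump of height $\Theta(H)$ and width $\Theta(H^{-2})$ (so $\norm{g_H}_2=\Theta(1)$) and $h_W\in\sch(\R)$ is a smooth plateau of height $\Theta(W^{-1/2})$ and width $\Theta(W)$ (so $\norm{h_W}_2=\Theta(1)$). Morally $g_H$ is spiky in $x$ and flat in $\xi$, while $h_W$ is flat in $x$ and spiky in $\xi$, so on whichever exponent we sit, one summand controls $\norm{f_{H,W}}_q$ and the \emph{other} controls $\norm{\widehat{f_{H,W}}}_q$. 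For $q>2$ this gives $\norm{f_{H,W}}_q=\Theta(H^{1-2/q})$, $\norm{\widehat{f_{H,W}}}_q=\Theta(W^{1/2-1/q})$ and $\norm{f_{H,W}}_2^2=\Theta(1)$, hence $F_q(f_{H,W})=\Theta(H^{1-2/q}W^{1/2-1/q})\to\infty$; for $1<q<2$ the roles of spike and plateau swap and one gets $F_q(f_{H,W})=\Theta(W^{1/q-1/2}H^{2/q-1})\to\infty$. Making this rigorous needs (i) exhibiting such bumps in $\sch(\R)$ with the stated scalings, (ii) estimating the $L^q$-norms of the transforms $\widehat{g_H},\widehat{h_W}$ (windowed sinc-type functions), and (iii) controlling cross terms: $\inn{g_H,h_W}\to0$, and $\norm{g_H+h_W}_q$ (likewise for the transforms) is comparable to the larger summand.

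\emph{The infimum for $q>2$.} Here I would take the chirp family $f_N(x)=e^{i\pi x^2}\phi(x/N)$ with $\phi\in\sch(\R)$ fixed; choosing $\phi$ Gaussian makes $f_N=e^{-\pi(1/N^2-i)x^2}$ a complex Gaussian whose transform $\widehat{f_N}$ is again a chirp of width $\Theta(N)$ and height $\Theta(1)$, so $\norm{f_N}_q=\norm{\widehat{f_N}}_q=\Theta(N^{1/q})$ while $\norm{f_N}_2^2=\Theta(N)$, giving $F_q(f_N)=\Theta(N^{2/q-1})\to0$. Combined with the previous paragraph this yields $I=(0,\infty)$ and proves the conjecture for $2<q\le\infty$ (recovering Theorem~\ref{thm:incomparable-heisenbergs} when $q=\infty$).

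\emph{The case $1<q<2$: the main obstacle.} For $1<q<2$ the conjecture as literally stated is false, and this is where the real difficulty lies. With $q'=q/(q-1)>2$, the Hausdorff--Young inequality (in our normalization, $\norm{\hat g}_{q'}\le\norm g_q$) applied to $g=f$ and $g=\hat f$ gives $\norm{\hat f}_{q'}\le\norm f_q$ and $\norm f_{q'}\le\norm{\hat f}_q$, while log-convexity of the $L^p$-norms (valid since $\tfrac12=\tfrac{1/2}{q}+\tfrac{1/2}{q'}$, the exponents being conjugate) gives $\norm f_2\le\norm f_q^{1/2}\norm f_{q'}^{1/2}$ and $\norm{\hat f}_2\le\norm{\hat f}_q^{1/2}\norm{\hat f}_{q'}^{1/2}$; multiplying these and using $\norm f_2=\norm{\hat f}_2$ yields $\norm f_2^2\le\norm f_q\norm{\hat f}_q$, i.e.\ $F_q(f)\ge1$ for every $f$ (indeed $F_q(f)\ge1/B_q>1$ with Beckner's sharp constant $B_q$ \cite{Beckner75}). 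So for $1<q<2$ the correct statement should be that the image of $F_q$ equals $(\kappa_q,\infty)$ or $[\kappa_q,\infty)$, where $\kappa_q=\inf_fF_q(f)$; the spike-plus-plateau family still shows the supremum is $+\infty$, so the actual content is to compute $\kappa_q$. This is the crux: $\kappa_q$ is neither $1/B_q$ (near-equality in the chain above would force $f$ to be simultaneously a Gaussian, from equality in Hausdorff--Young, and a scalar multiple of an indicator, from equality in log-convexity) nor the value $\sqrt2\,q^{-1/q}$ attained on Gaussians, so one is left with a genuine extremal problem with no evident Euler--Lagrange structure, presumably requiring ideas beyond the two-step template of this paper.
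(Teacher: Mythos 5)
This statement is left as a conjecture in the paper---the authors say explicitly that they are unable to prove it---so there is no proof of record to compare your attempt against. Your analysis splits cleanly into two halves that deserve separate comments.

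For $1<q<2$ you have not proved the conjecture; you have \emph{refuted} it, and the refutation is correct. With $q'=q/(q-1)>2$, the Hausdorff--Young inequality in the paper's unitary normalization gives $\norm{\hat f}_{q'}\le\norm f_q$ and, applied again to $\hat f$ together with $\hat{\hat f}(x)=f(-x)$, also $\norm f_{q'}\le\norm{\hat f}_q$. Since $\tfrac12=\tfrac12\cdot\tfrac1q+\tfrac12\cdot\tfrac1{q'}$, log-convexity of the $L^p$ norms gives $\norm f_2\le\norm f_q^{1/2}\norm f_{q'}^{1/2}$ and the analogous bound for $\hat f$; multiplying all four inequalities and using $\norm f_2=\norm{\hat f}_2$ yields $\norm f_2^2\le\norm f_q\norm{\hat f}_q$, i.e.\ $F_q(f)\ge1$ for every non-zero $f\in\sch(\R)$. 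So for $1<q<2$ the image of $F_q$ lies in $[1,\infty)$ and the conjecture as stated is false. Your Beckner sharpening to $F_q\ge 1/B_q>1$ is a nice refinement, though the further claim that $\kappa_q=\inf F_q$ is strictly larger than the Gaussian value $\sqrt2\,q^{-1/q}$ is asserted without argument---the Gaussian could well be the minimizer, and ruling this out would require checking first-order conditions. In any case the correct revised statement in this range is that the image is an interval of the form $(\kappa_q,\infty)$ or $[\kappa_q,\infty)$ with $\kappa_q\ge 1/B_q$, and identifying $\kappa_q$ is indeed an extremal problem the paper's two-step method does not touch.

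For $2<q\le\infty$, your sketch is an honest extension of the proof of Theorem~\ref{thm:incomparable-heisenbergs}, and the scalings check out. After using the dilation- and scalar-invariance of $F_q$, your chirp family coincides with the paper's complex Gaussians $f_{a,b}$ with $a^2-b^2=1$: the paper's formula for $\bab{\widehat{f_{a,b}}}$ gives $\norm{\widehat{f_{a,b}}}_q=(a^2+b^2)^{1/q-1/2}q^{-1/(2q)}$, hence $F_q(f_{a,b})=\sqrt2\,q^{-1/q}(a^2+b^2)^{1/q-1/2}\to0$ as $a\to\infty$. Likewise you can avoid building a fresh spike-plus-plateau family (and the cross-term estimates you flag) by reusing the paper's self-dual $g_c$ verbatim: its narrow/tall piece gives $\norm{g_c}_q^q\sim c^{q/2-1}$ and its wide/short piece gives $\sim c^{1-q/2}$, so $\norm{g_c}_q\sim c^{\ab{1/2-1/q}}$ for every $q\ne2$ while $\norm{g_c}_2^2\to\sqrt2$, whence $F_q(g_c)\sim c^{\ab{1-2/q}}\to\infty$. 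Your soft step---continuity of $F_q$ on $\sch(\R)\setminus\{0\}$ via Schwartz-seminorm domination of $\norm\cdot_q$ (which uses that $\int_{\ab x>1}\ab x^{-2q}\dd x<\infty$, automatic here since $q>1$), plus path-connectedness of an infinite-dimensional TVS minus a point---is a clean way to glue the two families and is arguably tidier than the explicit overlap the paper tracks in the $q=\infty$ case. Filling in these details settles the conjecture affirmatively for $2<q\le\infty$.
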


\subsubsection{An uncertainty principle for higher moments}
Theorem \ref{thm:our-heisenberg} is itself a special case of a much more general uncertainty principle, which we now state. Rather than proving uncertainty for the variance functional $V(f)$, it proves it for any moments greater than $1$ 
of the distributions $\ab{f}^2$ and $\ab{A f}^2$. Namely, for any $1<r<\infty$, let us define
\[
	M_r(f) = \int \ab x^r \ab{f(x)}^2 \dd x,
\]
which is precisely the $r$th moment of the distribution $\ab f^2$ if $\norm f_2=1$. Even when $\norm f_2 \neq 1$, $M_r(f)$ is still a good measure of how ``spread'' $f$ is, in that it computes how much $L^2$ mass of $f$ is far from the origin, weighted according to $\ab x^r$. 
Uncertainty principles for such functionals were studied by Cowling and Price \cite{CoPr}, who established the $q=2$ case of the following result for the Fourier transform, as well as many more general results of this flavor for the Fourier transform. As with the basic Heisenberg uncertainty principle, we believe that the $q \neq 2$ case is new, as is the extension to arbitrary $k$-Hadamard operators.
\begin{thm}[Heisenberg uncertainty principle for higher moments]\label{thm:general-heisenberg}
	Let $1<r,s<\infty$ and $1<q \leq \infty$. Then for any $k$-Hadamard operator $A$ and any doubly $L^1$ function $f$,
	\[
		M_r(f)^{\frac{q-1}{qr+q-2}} M_s(A f)^{\frac{q-1}{qs+q-2}} \geq C_{r,q} C_{s,q} k^{1-\frac 1q}\norm f_q^{\frac{2q-2}{qr+q-2}} \norm{A f}_q^{\frac{2q-2}{qs+q-2}},
	\]
	for some constants $C_{r,q}, C_{s,q}>0$ depending only on $r,q$, and $s,q$, respectively. In particular, if $s=r$, we have
	\[
		M_r(f) M_r(A f) \geq C_{r,q}' k^{\frac{qr+q-2}{q}} \norm f_q^2 \norm{\hat f}_q^2
	\]
	for another constant $C_{r,q}' = C_{r,q}^{2(qr+q-2)/(q-1)}$. 
\end{thm}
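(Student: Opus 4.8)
The plan is to run the same two-step scheme used for Theorem~\ref{thm:our-heisenberg}, which is exactly the case $r=s=2$ of the present statement; the only genuinely new ingredient is replacing the second-moment weight by an $r$-th moment weight. The first step is already done: it is the norm uncertainty principle of Theorem~\ref{thm:expanding-q-norm}, which for a $k$-Hadamard operator $A$ and a doubly $L^1$ function $f$ (so that all the $L^q$ norms below are finite) gives
\[
	\frac{\norm f_1}{\norm f_q}\cdot\frac{\norm{Af}_1}{\norm{Af}_q}\ \geq\ k^{1-1/q}.
\]
The second step is a universal inequality, to be proved for every nonzero $g\in L^1(\R)\cap L^\infty(\R)$ and every $1<r<\infty$:
\begin{equation}\label{eq:moment-bound-plan}
	\frac{\norm g_1}{\norm g_q}\ \leq\ \left(D_{r,q}\,\frac{M_r(g)}{\norm g_q^2}\right)^{\frac{q-1}{qr+q-2}},
\end{equation}
with $D_{r,q}$ depending only on $r$ and $q$; for $r=2$ this is exactly (\ref{eq:variance-bound}). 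Granting (\ref{eq:moment-bound-plan}), I apply it to $g=f$ with exponent $r$ and to $g=Af$ with exponent $s$, multiply the two inequalities, and feed the product into the displayed bound above; a short rearrangement then gives the first inequality of the theorem, with $C_{r,q}$ and $C_{s,q}$ equal to $D_{r,q}^{-(q-1)/(qr+q-2)}$ and $D_{s,q}^{-(q-1)/(qs+q-2)}$. For the ``in particular'' statement I set $s=r$, so that the two exponents on the left coincide; raising both sides to the power $\tfrac{qr+q-2}{q-1}$ and using $\bigl(1-\tfrac1q\bigr)\cdot\tfrac{qr+q-2}{q-1}=\tfrac{qr+q-2}{q}$ yields $M_r(f)M_r(Af)\geq C_{r,q}'\,k^{(qr+q-2)/q}\norm f_q^2\norm{Af}_q^2$ with $C_{r,q}'=C_{r,q}^{2(qr+q-2)/(q-1)}$.

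The heart of the proof is (\ref{eq:moment-bound-plan}), which I would establish exactly as the variance bound (\ref{eq:variance-bound}) in the proof of Theorem~\ref{thm:our-heisenberg}, with the weight $1/x$ there replaced throughout by $\ab x^{-r/2}$. Put $T=\tfrac12\bigl(\norm g_1/(2\norm g_q)\bigr)^{q/(q-1)}$, so that $(2T)^{1-1/q}\norm g_q=\tfrac12\norm g_1$. H\"older's inequality applied to $\mathbf 1_{[-T,T]}\cdot\ab g$ with conjugate exponents $q/(q-1)$ and $q$ gives $\int_{-T}^{T}\ab{g(x)}\dd x\leq(2T)^{1-1/q}\norm g_q=\tfrac12\norm g_1$, hence the tail carries at least half the $L^1$ mass: $\int_{\ab x>T}\ab{g(x)}\dd x\geq\tfrac12\norm g_1$. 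Writing $\ab{g(x)}=\ab x^{-r/2}\cdot\bigl(\ab x^{r/2}\ab{g(x)}\bigr)$ and applying Cauchy--Schwarz on $\{\ab x>T\}$,
\[
	\tfrac12\norm g_1\ \leq\ \left(\int_{\ab x>T}\ab x^{-r}\dd x\right)^{\!1/2}M_r(g)^{1/2}\ =\ \left(\frac{2}{r-1}\right)^{\!1/2}T^{-(r-1)/2}\,M_r(g)^{1/2},
\]
where $\int_{\ab x>T}\ab x^{-r}\dd x=\tfrac{2}{r-1}T^{-(r-1)}$ is finite precisely because $r>1$. Substituting the value of $T$, collecting the powers of $\norm g_1/\norm g_q$ via $1+\tfrac{q(r-1)}{2(q-1)}=\tfrac{qr+q-2}{2(q-1)}$, and taking the appropriate root produces (\ref{eq:moment-bound-plan}); the constant $D_{r,q}$ is a finite product of powers of $2$ and of $\tfrac{2}{r-1}$.

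I expect no real obstacle beyond the routine bookkeeping of exponents in the substitution of $T$ and the final rearrangement. The hypotheses enter in exactly two places: $r,s>1$ is what makes $\int_{\ab x>T}\ab x^{-r}\dd x$ and its $s$-analogue converge (so that $D_{r,q},D_{s,q}<\infty$), and $q>1$ is what makes $T$ well-defined and the H\"older exponent $q/(q-1)$ finite. I would also dispose of degenerate cases at the outset: if $M_r(f)=\infty$ or $M_s(Af)=\infty$ the asserted inequality is vacuous, and if $f\neq0$ then the $k$-Hadamard condition $\norm{A^*Af}_\infty\geq k\norm f_\infty$ forces $Af\neq0$, so the right-hand side is finite and strictly positive (recall that doubly $L^1$ functions lie in every $L^p$, $1\le p\le\infty$). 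As in the companion theorems, the constants obtained are certainly far from sharp and no effort is made to improve them.
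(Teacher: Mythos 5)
Your proof matches the paper's own argument essentially line for line: the same two-step scheme, the same choice of truncation $T=\tfrac12(\norm g_1/(2\norm g_q))^{q/(q-1)}$, the same H\"older step to push half the $L^1$ mass to $\{\ab x>T\}$, the same Cauchy--Schwarz with weight $\ab x^{-r/2}$ using $r>1$ for convergence, and the same rearrangement into the exponent $(q-1)/(qr+q-2)$. The exponent bookkeeping and the derivation of the ``in particular'' case are also correct, so there is nothing to add.
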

We defer the proof of this theorem to Appendix \ref{sec:appendix}, but the basic idea is the same as the proof of Theorem \ref{thm:our-heisenberg}: by the general framework, it suffices to upper-bound $\norm f_1/\norm f_q$ as a function of $M_r(f)$. To do so, we again choose an appropriate $T$ so that most of the $L^1$ mass of $f$ is outside of $[-T,T]$, and then proceed by simple applications of the H\"older and Cauchy--Schwarz inequalities. 

\subsubsection{Further extensions and open questions}\label{sec:open-problems}
Cowling and Price \cite{CoPr} actually study a much more general question than what is in Theorem \ref{thm:general-heisenberg} (though restricting to $q=2$ and the Fourier transform). They investigate integrals of the form $\int w(x) \ab{f(x)}^p$ for values of $p$ other than $2$ and for quite general functions $w$, and finding necessary and sufficient conditions for an uncertainty principle to hold for such functionals of $f$ and $\hat f$. Using the same techniques, we can also obtain such results in the case $w(x)=\ab x^r$, as above; the proof is identical to that of Theorem \ref{thm:general-heisenberg}, except that instead of using Cauchy--Schwarz to write
\[
		\int_{\ab x>T} \ab{f(x)} \dd x = \int_{\ab x >T} \frac{1}{\ab x^{r/2}} (\ab x^{r/2} \ab{f(x)}) \dd x \leq \left( \int_{\ab x>T} \frac{\mathrm d x}{\ab x^r} \right) ^{1/2} \left( \int \ab x^r \ab{f(x)}^2 \dd x \right) ^{1/2},
\]
we would instead use H\"older's inequality to say
\[
	\int_{\ab x>T} \ab{f(x)} \dd x = \int_{\ab x>T} \frac{1}{\ab x^{r/p}} (\ab x^{r/p} \ab{f(x)}) \dd x \leq \left( \int_{\ab x>T} \frac{\mathrm d x}{\ab x^{\frac{r}{p-1}}} \right)^{\frac{p-1}{p}}  \left( \int \ab x^r \ab{f(x)}^p \right) ^{\frac 1p}.
\]
Then as long as $r>p-1$, this first integral will converge, and the argument would go through as before. We omit the details, since they are very similar to (but messier than) the computations in the proof of Theorem \ref{thm:general-heisenberg}.

However, an interesting point is raised by this argument, which is the fact that it only works for $r>p-1$. Cowling and Price's theorem works for all $r>(p-2)/2$, which is a larger range, and they in fact prove a converse which says that no such result is true if $r$ is any smaller. It is not at present clear to us whether there is a genuine obstruction that prevents our technique from working for all possible $r$, or if there is some variant manipulation that would yield the full strength of Cowling and Price's theorem.

A similar convergence issue arises when attempting to prove the multidimensional version of Heisenberg's uncertainty principle with our framework. The multidimensional version says the following.
\begin{thm}\label{thm:multidim-hup}
Let $n \geq 1$ be an integer and $f \in L^2(\R^n)$. Then
\begin{equation}
	\left(\int \norm x_2^2 \ab{f(x)}^2 \dd x\right)\left(\int \norm \xi_2^2 \ab{\hat f(\xi)}^2 \dd \xi\right) \geq C n^2 \norm f_2^2 \norm{\hat f}_2^2,
\end{equation}
where the constant $C>0$ does not depend on the dimension $n$. 
\end{thm}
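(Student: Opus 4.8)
The plan is to deduce the $n$-dimensional inequality from $n$ separate applications of the one-dimensional Heisenberg uncertainty principle already proved (the $q=2$ case of Theorem~\ref{thm:our-heisenberg}), rather than applying our framework directly on $\R^n$. As noted just above, a direct attempt fails because $\int_{\norm x_2>T}\norm x_2^{-2}\dd x$ diverges once $n\geq 3$, so instead we run the one-dimensional argument one coordinate at a time and recombine the results. We may assume that $V(f)$ and $V(\hat f)$ are both finite (otherwise there is nothing to prove), and by approximating $f$ in the relevant weighted $L^2$ norms it suffices to prove the bound for $f\in\sch(\R^n)$, for which all the integral manipulations below are legitimate.

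For each $j\in\{1,\dots,n\}$ put $V_j(g)=\int_{\R^n}x_j^2\ab{g(x)}^2\dd x$, so that $V(f)=\sum_{j=1}^n V_j(f)$ and $V(\hat f)=\sum_{j=1}^n V_j(\hat f)$. The core step is the claim that $V_j(f)\,V_j(\hat f)\geq C\norm f_2^4$ for every $j$, where $C=2^{-12}$ is the constant supplied by Theorem~\ref{thm:our-heisenberg} in dimension one. To see this, fix $j$ and, for $x'\in\R^{n-1}$ ranging over the remaining coordinates, let $f_{x'}(t)=f(x_1,\dots,t,\dots,x_n)$, which is a Schwartz function of one real variable. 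Applying the one-dimensional Heisenberg uncertainty principle to $f_{x'}$ gives $\bigl(\int t^2\ab{f_{x'}(t)}^2\dd t\bigr)\bigl(\int\eta^2\ab{\wh{f_{x'}}(\eta)}^2\dd\eta\bigr)\geq C\bigl(\int\ab{f_{x'}(t)}^2\dd t\bigr)^2$; taking square roots, integrating over $x'$, and using the Cauchy--Schwarz inequality in the form $\int\sqrt{a(x')b(x')}\dd x'\leq(\int a)^{1/2}(\int b)^{1/2}$ yields
\[
	\sqrt C\,\norm f_2^2\;\leq\;\Bigl(\int_{\R^{n-1}}\int_\R t^2\ab{f_{x'}(t)}^2\dd t\,\dd x'\Bigr)^{1/2}\Bigl(\int_{\R^{n-1}}\int_\R\eta^2\ab{\wh{f_{x'}}(\eta)}^2\dd\eta\,\dd x'\Bigr)^{1/2}.
\]
The square of the first factor on the right is exactly $V_j(f)$. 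For the second factor, observe that $\wh{f_{x'}}(\eta)$ is the partial Fourier transform of $f$ in the $j$-th variable evaluated at $\eta$, and since the weight $\eta^2=\xi_j^2$ involves only that variable, the Plancherel formula applied in the remaining $n-1$ variables shows that the square of the second factor equals $\int_{\R^n}\xi_j^2\ab{\hat f(\xi)}^2\dd\xi=V_j(\hat f)$. Hence $\sqrt C\,\norm f_2^2\leq\sqrt{V_j(f)V_j(\hat f)}$, which is the claim.

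It remains to recombine the $n$ coordinate bounds. By the Cauchy--Schwarz inequality,
\[
	\sqrt{V(f)\,V(\hat f)}=\sqrt{\Bigl(\sum_{j=1}^n V_j(f)\Bigr)\Bigl(\sum_{j=1}^n V_j(\hat f)\Bigr)}\;\geq\;\sum_{j=1}^n\sqrt{V_j(f)\,V_j(\hat f)}\;\geq\;n\sqrt C\,\norm f_2^2,
\]
and squaring, together with Plancherel's identity $\norm{\hat f}_2=\norm f_2$, gives $V(f)\,V(\hat f)\geq Cn^2\norm f_2^2\norm{\hat f}_2^2$, which is the theorem with the dimension-independent constant $C=2^{-12}$.

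I expect the main obstacle to be technical rather than conceptual: one must verify that the one-dimensional theorem genuinely applies to the slices $f_{x'}$ (which is why reducing to Schwartz $f$ at the outset is convenient), and one must carefully justify the Fubini and Plancherel manipulations that rewrite $\int_{\R^{n-1}}\int_\R\eta^2\ab{\wh{f_{x'}}(\eta)}^2\dd\eta\,\dd x'$ as $\int_{\R^n}\xi_j^2\ab{\hat f(\xi)}^2\dd\xi$. Once these are in place, the uniformity of $C$ in $n$ is automatic, since every coordinate contributes the same one-dimensional constant.
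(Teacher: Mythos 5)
Your argument is correct, and every step checks out: the slicing is legitimate once you reduce to Schwartz $f$; the partial Plancherel identity correctly turns $\int_{\R^{n-1}}\int_\R\eta^2\ab{\widehat{f_{x'}}(\eta)}^2\dd\eta\dd x'$ into $V_j(\hat f)$; the first Cauchy--Schwarz (in $x'$) gives $V_j(f)V_j(\hat f)\geq C\norm f_2^4$; and the second Cauchy--Schwarz (over $j$) gives $\sqrt{V(f)V(\hat f)}\geq\sum_j\sqrt{V_j(f)V_j(\hat f)}\geq n\sqrt C\norm f_2^2$ with a genuinely dimension-independent $C$.

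But note that the paper itself never proves Theorem~\ref{thm:multidim-hup}: it is stated as a known classical fact, and immediately after it the authors pose an open problem asking whether their two-step framework can be made to work \emph{directly} on $\R^n$ --- specifically, whether one can prove a bound of the form $\norm g_1/\norm g_\infty\leq(Cn\,V(g)/\norm g_\infty^2)^c$ for general $g$ on $\R^n$, and they point out that the obvious attempt breaks down because $\int_{\norm x_2>T}\norm x_2^{-2}\dd x$ diverges when $n\geq 3$. Your proof is the standard tensorization argument: apply the one-dimensional $q=2$ case of Theorem~\ref{thm:our-heisenberg} (which the paper \emph{does} prove with its framework) coordinate-by-coordinate and recombine. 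This is a correct and classical route to the theorem, and it shows that the framework applied in dimension one is enough to reach the $n$-dimensional statement with the right $n^2$ scaling --- but it circumvents rather than answers the open problem, which asks precisely whether the direct $n$-dimensional norm-ratio inequality holds. Tensorization is exactly what the authors are trying to avoid, because it leaves open the question of whether the two-step method is intrinsically one-dimensional. So if your goal was merely to prove Theorem~\ref{thm:multidim-hup}, you have succeeded; if your goal was to resolve the open problem, you have not.
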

If one attempts to prove this by using the argument from the proof of Theorem \ref{thm:our-heisenberg}, the natural thing to try is to pick $T$ appropriately and then to write
\[
	\frac 12 \norm f_1 \leq \int_{\norm x_2 >T} \ab{f(x)}\dd x \leq \left( \int_{\norm x_2>T} \frac{1}{\norm x_2^2} \dd x \right) ^{1/2} \left( \int_{\norm x_2 >T} \norm x_2^2 \ab{f(x)}^2 \dd x \right) ^{1/2}.
\]
However, once $n>1$, the first integral is infinite for any $T$, causing this proof to break down. The issue is again a convergence issue; in fact, one can make this proof go through by integrating $1/\norm x_2^r$ for any $r>n$, which means that one can prove a multidimensional uncertainty principle for $M_r(f)$ for any $r>n$. However, we still do not know how to prove the ordinary Heisenberg uncertainty principle in any dimension greater than $1$ using our framework, and it would be very interesting to understand if these convergence issues represent a real limitation of our approach, or if there is a way around them. We leave this tantalizing question as an open problem.
\begin{open}
	Can one prove the multidimensional Heisenberg uncertainty principle, Theorem \ref{thm:multidim-hup}, using our framework? For instance, can one prove that if $g \in L^1(\R^n) \cap L^\infty(\R^n)$, then
	\[
		\frac{\norm g_1}{\norm g_\infty} \leq \left(Cn \frac{V(g)}{\norm g_\infty^2}\right)^c,
	\]
	where $V(g) = \int_{\R^n} \norm x_2^2 \ab{g(x)}^2 \dd x$ is the $n$-dimensional variance of $g$, and $C,c>0$ are constants independent of $n$? Alternately, can one prove such an inequality with $\norm g_\infty$ replaced by $\norm g_q$? In all these questions, the main interest is to obtain the correct dependence on the dimension $n$.
\end{open}

\paragraph{Acknowledgments} We would like to thank Zhengwei Liu and Assaf Naor for helpful discussions, and Tomasz Kosciuszko for correcting an error in an earlier draft of this paper. We would also like to thank Greg Kuperberg for meticulous reading and many insightful comments on an earlier draft, as well as for permission to include his Theorem \ref{thm:kuperberg} in this paper.

\appendix
\section{Proofs of some technical results}\label{sec:appendix}
In this section, we present the proofs of Lemma \ref{lem:non-ab-fourier-props}, Proposition \ref{prop:1-supp-vs-2-supp}, Theorem \ref{thm:our-2-support}, Theorem \ref{thm:incomparable-heisenbergs}, and Theorem \ref{thm:general-heisenberg}, which were omitted from the main text.
\subsection{Proof of Lemma \ref{lem:non-ab-fourier-props}}
\begin{proof}[Proof of Lemma \ref{lem:non-ab-fourier-props}]\hfill
\begin{enumerate}
	\item 
	Recall that the operators $A,B:\C^{n \times n} \to \C^{n \times n}$ are defined by $A\circ M = FMF^*$ and $B\circ N=F^* NF$, and that we showed from Proposition \ref{prop:matrix-entries-orth} that $F^* F = FF^* = nI$. This implies that
	\[
		B\circ (A\circ M) = F^* (F M F^*) F = (nI)M(nI) = n^2 M.
	\]

	\item \label{item:A-bound} Recall that $V$ consists of all matrices $T_f$ for $f \in \C[G]$. To prove this norm bound, it suffices to prove it for the extreme points of the $L^1$ ball. So we may assume that $f=\delta_x$ is the function that takes value $1$ at some $x \in G$ and value $0$ elsewhere. In that case, $T_f$ is a permutation matrix, and therefore every entry of $F T_{\delta_x} F^*$ is the inner product of two columns of $F$. By Proposition \ref{prop:matrix-entries-orth}, all these inner products are either $0$ or $n$, which implies that $\norm{A\circ T_{\delta_x}}_\infty \leq n = \norm{T_{\delta_x}}_1$.
	\item Note that $B=A^*$, and recall that the $L^1$ and $L^\infty$ norms are dual to one another. This implies that $\norm B_{1 \to \infty} = \norm A_{1 \to \infty}$, and thus this is a direct consequence of (\ref{item:A-bound}).
	\qedhere
\end{enumerate}
\end{proof}

\subsection{Proof of Proposition \ref{prop:1-supp-vs-2-supp}}
\begin{proof}[Proof of Proposition \ref{prop:1-supp-vs-2-supp}]
	Assume this were not the case, and let $n$ be the smallest dimension in which a counterexample exists. Consider the set of counterexamples $v$ with $\norm v_1=1$. Since the $L^1$ and $L^2$ norms of a vector are unchanged if we replace each entry by its absolute value and are unchanged if we permute the coordinates, we may restrict ourselves to counterexamples $v$ with non-negative real entries such that $v_1 \geq v_2 \geq \dotsb \geq v_n$. Finally, observe that for any $s$, the set of vectors with $\ab{\supp_{\varepsilon^2}^1(v)} = s$ is a closed set, and similarly for $\ab{\supp_\varepsilon^2(v)}$. So the set of all counterexamples $v$ with $v_1 \geq v_2 \geq \dotsb \geq v_n \geq 0$ and $\norm v_1=1$ is a compact subset of $\R^n$, and we may therefore pick a counterexample $v$ of minimal $L^2$ norm. So from now on, let $v$ be a counterexample with $v_1 \geq \dotsb \geq v_n \geq 0$, $\norm v_1=1$, $n$ chosen to be minimal, and $\norm v_2$ minimal among all such counterexamples. 
	Let $s_1=\ab{\supp_{\varepsilon^2}^1(v)}$ and $s_2 = \ab{\supp_\varepsilon^2(v)}$, so we assume for contradiction that $s_1<s_2$. We split $v$ into three sub-vectors,
	\[
		v_L = (v_1,v_2,\ldots,v_{s_1}) \qquad v_M = (v_{s_1+1},\ldots,v_{s_2-1}) \qquad v_R = (v_{s_2},\ldots,v_n),
	\]
	with the subscripts indicating \emph{Left, Middle,} and \emph{Right}. Note that $v_M$ may be the empty vector if $s_2=s_1+1$. Let the lengths of these vectors be $\ell=s_1,m=s_2-s_1-1$, and $r=n-s_2+1$. We know that $v_L$ contains at least $1- \varepsilon^2$ of the $L^1$ mass of $v$, meaning that
	\[
		\norm{v_L}_1 \geq (1- \varepsilon^2) \norm v_1 = (1- \varepsilon^2)(\norm {v_L}_1+(\norm{v_M}_1+\norm{v_R}_1)),
	\]
	which implies that
	\begin{equation}\label{eq:1-bound}
		\norm{v_L}_1 \geq \frac{1- \varepsilon^2}{\varepsilon^2}(\norm{v_M}_1+\norm{v_R}_1).
	\end{equation}
	Similarly, since $v_L$ and $v_M$ together contain $s_2-1$ coordinates of $v$, they must collectively have \emph{less than} $1- \varepsilon$ of the $L^2$ mass of $v$, meaning that
	\[
		\norm{v_L}_2^2+\norm{v_M}_2^2 < (1- \varepsilon^2)\norm v_2^2 = (1- \varepsilon^2)((\norm {v_L}_2^2+\norm{v_M}_2^2)+\norm{v_R}_2^2),
	\]
	which implies
	\begin{equation}\label{eq:2-bound}
		\norm{v_L}_2^2 + \norm{v_M}_2^2 < \frac{1- \varepsilon^2}{\varepsilon^2} \norm{v_R}_2^2.
	\end{equation}
	Now, we claim that $v_L$ and $v_M$ are both constant vectors. Indeed, suppose not, and let $w$ be the vector gotten by replacing the first $\ell$ entries of $v$ by the average value of $v_1,\ldots,v_{\ell}$, and replacing the next $m$ entries by the average value of $v_{s_1+1},\ldots,v_{s_2-1}$. Then $\norm w_2<\norm v_2$, since the $L^2$-norm is strictly convex, but $\norm w_1 = \norm v_1$. Therefore, inequalities (\ref{eq:1-bound}) and (\ref{eq:2-bound}) both hold for $w$, meaning that $w$ is a new counterexample with smaller $L^2$ norm, which we assumed did not exist. Thus, $v_L$ and $v_M$ are both constant vectors. 
	In other words, we've found that there exist constants $a\geq b \geq 0$ such that $v_L=(a,a,\ldots,a),v_M = (b,b\ldots,b)$, and every entry of $v_R$ is at most $b$. In that case, inequalities (\ref{eq:1-bound}) and (\ref{eq:2-bound}) become
	\begin{align}
		&\ell a \geq \frac{1- \varepsilon^2}{\varepsilon^2}(mb+\norm{v_R}_1)\label{eq:new-1-bound}\\
		&\ell a^2 + mb^2 < \frac{1- \varepsilon^2}{\varepsilon^2} \norm{v_R}_2^2\label{eq:new-2-bound}.
	\end{align}
	Multiplying (\ref{eq:new-1-bound}) by $a$ and using the fact that $m,b \geq 0$, we find that
	\[
		\ell a^2 \geq \frac{1- \varepsilon^2}{\varepsilon^2} a \norm{v_R}_1 \geq \frac{1- \varepsilon^2}{\varepsilon^2} \norm{v_R}_2^2,
	\]
	where the last step uses the fact that every entry of $v_R$ is at most $a$. However, (\ref{eq:new-2-bound}) implies that
	\[
		\ell a^2 < \frac{1- \varepsilon^2}{\varepsilon^2} \norm{v_R}_2^2,
	\]
	a contradiction. 
\end{proof}

\subsection{Proof of Theorem \ref{thm:our-2-support}}
\begin{proof}[Proof of Theorem \ref{thm:our-2-support}]
	Let $U=\frac{1}{\sqrt k}A$ be a rescaling of $A$, chosen so that $U$ is unitary, meaning that $\norm{Uw}_2=\norm w_2$ for all $w \in \C^n$. Note that since the definition of $\supp^2$ is invariant under rescaling, we have that $\ab{\supp_\eta^2(Av)} = \ab{\supp_\eta^2(Uv)}$.

	Let $S=\supp_\varepsilon^2(v)$ and $T= \supp_\eta^2(Uv)$. Let $P_S,P_T$ denote the orthogonal projections onto the coordinates indexed by $S,T$, respectively, and let $M = P_S U^* P_T$ be the submatrix of $U^*$ with rows indexed by $S$ and columns by $T$. Our goal is to obtain upper and lower bounds on $\norm M_{2 \to 2}$, which we will combine to conclude the desired result. To begin with the upper bound, we observe that for any vector $w$,
	\begin{align*}
		\norm{Mw}_2^2 = \sum_{i=1}^n \ab{(Mw)_i}^2 = \sum_{i \in S} \ab{\inn{M_i,w}}^2 \leq \sum_{i \in S} \norm{M_i}^2_2 \norm w_2^2 \leq \frac{\ab S \ab T}{k} \norm w_2^2,
	\end{align*}
	where $M_i$ denotes the $i$th row of $M$. The first inequality is Cauchy--Schwarz, while the second uses the fact that every entry of $M$ has absolute value at most $1/\sqrt k$, and that there are $\ab S \ab T$ entries in $M$. This shows that $\norm M_{2 \to 2} \leq \sqrt{\ab S \ab T/k}$. 

	For the lower bound, we first observe that the unitarity of $U$ implies that
	\[
		\norm{v-U^*P_TUv}_2 = \norm{U^* U v- U^* P_T Uv}_2 = \norm{U^*(Uv-P_T Uv)}_2 = \norm{Uv -P_T Uv}_2 \leq \eta \norm v_2,
	\]
	where the last step follows from the definition of $T$. Since $P_S$ is a projection, it is a contraction in $L^2$, so
	\[
		\norm{P_S v - M U v}_2 = \norm{P_S(v - U^* P_T U v)}_2 \leq \norm {v - U^* P_T Uv}_2 \leq \eta \norm v_2.
	\]
	Moreover, by the definition of $S$, we know that $\norm{v-P_S v}_2 \leq \varepsilon \norm v_2$. Therefore,
	\[
		\norm{v-M U v}_2 \leq \norm{P_S v - M U v}_2 + \norm{v-P_S v}_2 \leq (\eta+\varepsilon) \norm v_2.
	\]
	Using the inequality $\norm{a-b}_2 \geq \norm a_2 - \norm b_2$, we conclude that
	\[
		\norm{M U v}_2 \geq (1- \varepsilon - \eta) \norm v_2 = (1- \varepsilon - \eta) \norm{Uv}_2.
	\]
	Combining this with our bound $\norm M_{2 \to 2} \leq \sqrt{\ab S \ab T/k}$ gives the desired result. 
\end{proof}

\subsection{Proof of Theorem \ref{thm:incomparable-heisenbergs}}
\begin{proof}[Proof of Theorem \ref{thm:incomparable-heisenbergs}]
	Let $a>b>0$ be real numbers. Define
	\[
		f_{a,b}(x) = e^{-\pi ((a+bi)x)^2} = e^{-\pi (a^2-b^2)x^2} e^{-2\pi i ab x^2}.
	\]
	From the definition, we see that $\ab{f_{a,b}(x)} = e^{-\pi(a^2-b^2)x^2}$. Since we assumed that $a>b>0$, we have that $a^2-b^2>0$, and therefore $\ab{f_{a,b}}$ decays superexponentially at infinity, and we see that $f_{a,b} \in \sch(\R)$. We can compute
	\[
		\norm{f_{a,b}}_2^2 = \int \ab{f_{a,b}(x)}^2\dd x = \int e^{-2\pi (a^2-b^2)x^2} = \frac{1}{\sqrt{2(a^2-b^2)}}.
	\]
	Additionally, for any fixed $a>b>0$, we see that $\pi(a^2-b^2)x^2$ is minimized at $x=0$, implying that $\ab{f_{a,b}(x)}$ is maximized at $x=0$, and therefore
	\[
		\norm{f_{a,b}}_\infty = \ab{f_{a,b}(0)} = 1.
	\]
	We can also compute the Fourier transform of $f_{a,b}$ explicitly, by recalling that the Fourier transform of $e^{-\pi(c x)^2}$ is $\frac 1c e^{-\pi(\xi/c)^2}$, and that this holds for all $c \in \C$ for which $e^{-\pi(c x)^2} \in L^2$. Setting $c=a+bi$, we find that
	\[
		\wh{f_{a,b}}(\xi) = \frac{1}{a+bi} e^{-\pi \left( \frac{\xi}{a+bi} \right) ^2} = \frac{1}{a+bi} e^{-\pi \xi^2 (a^2-b^2)/(a^2+b^2)^2} e^{-2\pi i \xi^2 ab/(a^2+b^2)^2}.
	\]
	In particular,
	\[
		\bab{\wh{f_{a,b}}(\xi)} = \frac{1}{\sqrt{a^2+b^2}}e^{-\pi \xi^2 (a^2-b^2)/(a^2+b^2)^2}.
	\]
	Again for fixed $a>b>0$, we have that $\ab{\wh{f_{a,b}}(\xi)}$ is maximized at $\xi=0$, and we conclude that
	\[
		\norm{\wh{f_{a,b}}}_\infty = \frac{1}{\sqrt{a^2+b^2}}.
	\]
	This implies that 
	\[
		F(f_{a,b}) = \frac{\norm{f_{a,b}}_\infty \norm{\wh{f_{a,b}}}_\infty}{\norm{f_{a,b}}_2^2}=\sqrt{\frac{2(a^2-b^2)}{a^2+b^2}}.
	\]
	Finally, let us set $b =\sqrt{a^2-1}$, and insist that $a>1$ so that $b>0$. Then we get that
	\[
		F(f_{a,\sqrt{a^2-1}}) = \sqrt{\frac{2}{2a^2-1}},
	\]
	and as $a$ ranges over $(1,\infty)$, the function $\sqrt{2/(2a^2-1)}$ ranges over $(0,\sqrt 2)$. So we conclude that $(0,\sqrt 2)$ is in the image of $F$.

	Next, we wish to construct a family of functions whose images under $F$ cover the remaining interval $[\sqrt 2,\infty)$. For a real number $c>0$, we define
	\[
		g_c(x) = \frac{1}{\sqrt c} e^{-\pi(x/c)^2} + \sqrt c e^{-\pi(cx)^2}.
	\]
	From the property mentioned above about the Fourier transform of a Gaussian, we see that $g_c$ is its own Fourier transform for all $c$. We can compute
	\[
		\norm{\wh{g_c}}_\infty =\norm{g_c}_\infty = \ab{g_c(0)} = \sqrt c+ \frac{1}{\sqrt c}.
	\]
	Moreover, we can also compute
	\begin{align*}
		\norm{g_c}_2^2 &= \int_{-\infty}^\infty \left( \frac{1}{\sqrt c} e^{-\pi(x/c)^2} + \sqrt c e^{-\pi(cx)^2} \right) ^2 \dd x\\
		&=\frac 1c \int_{-\infty}^\infty e^{-2\pi(x/c)^2} \dd x+ 2 \int_{-\infty}^\infty e^{-\pi x^2(c^2+1/c^2)} \dd x+c \int_{-\infty}^\infty e^{-2\pi(cx)^2}\dd x\\
		&=\frac 1c \left( \frac{c}{\sqrt 2} \right) +2 \left( \frac{1}{\sqrt{c^2+\frac{1}{c^2}}} \right) + c \left( \frac{1}{c\sqrt 2} \right) \\
		&=\sqrt 2 + \frac{2c}{\sqrt{c^4+1}}.
	\end{align*}
	Thus, we find that
	\begin{align*}
		F(g_c)&=\frac{\norm{g_c}_\infty \norm{\wh{g_c}}_\infty}{\norm{g_c}^2_2}=\frac{(\sqrt c+\frac{1}{\sqrt c})^2}{\sqrt 2 + \frac{2c}{\sqrt{c^4+1}}}=\frac{c+2+\frac 1c}{\sqrt 2 + \frac{2c}{\sqrt{c^4+1}}}.
	\end{align*}
	This function is minimized at $c=1$, where its value is $\sqrt 2$. Moreover, as $c \to \infty$, the denominator converges to $\sqrt 2$, whereas the numerator grows like $c$. Thus, we see that $\lim_{c \to \infty}F(g_c)=\infty$. Since this is a continuous function of $c$, we conclude that $[\sqrt 2,\infty)$ is in the image of $F$. Combining this with our result that $(0,\sqrt 2)$ is in the image of $F$, we conclude that the image of $F$ is all of $\R_{>0}$.
\end{proof}

\subsection{Proof of Theorem \ref{thm:general-heisenberg}}
\begin{proof}[Proof of Theorem \ref{thm:general-heisenberg}]
	We may assume that $f \neq 0$. We mimic the proof of Theorem \ref{thm:our-heisenberg}. We first claim that for any non-zero $g \in L^1(\R) \cap L^\infty(\R)$,
	\begin{equation}\label{eq:mr-bound}
		\frac{\norm g_1}{\norm g_q} \leq \frac{1}{C_{r,q}} \left(  \frac{M_r(g)}{\norm g_q^2} \right) ^{\frac{q-1}{qr+q-2}},
	\end{equation}
	where
	\[
		C_{r,q} = (r-1)^{\frac{q-1}{qr+q-2}} 2^{-\frac{2qr+q-r-2}{qr+q-2}}
	\]
	Once we have this, we can apply it with the norm uncertainty inequality, Theorem \ref{thm:expanding-q-norm}, to obtain that
	\[
		\frac{1}{C_{r,q}C_{s,q}}\left(  \frac{M_r(f)}{\norm f_q^2} \right) ^{\frac{q-1}{qr+q-2}} \left(  \frac{M_s({Af})}{\norm {Af}_q^2} \right) ^{\frac{q-1}{qs+q-2}} \geq \frac{\norm f_1}{\norm f_q} \cdot \frac{\norm{Af}_1}{\norm{Af}_1} \geq k^{\frac{q-1}{q}},
	\]
	which is the claimed result. So it suffices to prove (\ref{eq:mr-bound}).

	As before, we set $T=\frac 12 (\norm g_1/(2\norm g_q))^{q/(q-1)}$, so that $(2T)^{1-1/q} \norm g_q =\frac 12 \norm g_1$. H\"older's inequality again gives that
	\[
		\int_{-T}^T \ab{g(x)}\dd x \leq \frac 12 \norm g_1, \qquad \text{implying} \qquad \int_{\ab x>T} \ab{g(x)}\dd x \geq \frac 12 \norm g_1.
	\]
	Therefore, applying the Cauchy--Schwarz inequality, we can compute
	\begin{align*}
		\frac 12 \norm g_1 &\leq \int_{\ab x >T} \ab{g(x)} \dd x\\
		&=\int_{\ab x>T} \frac{1}{\ab x^{r/2}} \left(\ab x^{r/2} \ab{g(x)}\right) \dd x\\
		&\leq \left( \int_{\ab x>T} \frac{1}{\ab x^r}\dd x \right) ^{1/2} \left( \int_{\ab x>T} \ab x^r \ab{g(x)}^2\dd x \right) ^{1/2}\\
		&\leq \frac{\sqrt 2}{\sqrt{(r-1)T^{r-1}}} M_r(g) ^{1/2},
	\end{align*}
	where we use the assumption that $r>1$ to evaluate the (convergent) integral. Rearranging, and using the definition of $T$, we obtain (\ref{eq:mr-bound}).
\end{proof}

\end{document}